\def\<{{\langle}}
\def\>{{\rangle}}
\def\reals{\mathbb{R}} %
\renewcommand{\exp}[1]{\operatorname{exp}\left(#1\right)} %
\providecommand{\argmax}{\mathop\mathrm{arg\, max}} %
\providecommand{\argmin}{\mathop\mathrm{arg\, min}}
\newtheoremstyle{dotless}{}{}{\itshape}{}{\bfseries}{}{ }{}
\theoremstyle{dotless}
\newtheorem*{defi}{Definition}
\theoremstyle{plain}
\newtheorem{example}{Example}
\newtheorem{myth}{Theorem}
\newtheorem{myprop}[myth]{Proposition}
\newtheorem{mylem}[myth]{Lemma}
\newtheorem{corr}[myth]{Corollary}
\newtheorem*{myth*}{Theorem}
\newtheorem*{corr*}{Corollary}
\newtheoremstyle{named}{}{}{\itshape}{}{\bfseries}{.}{.5em}{#1 #3}
\theoremstyle{named}
\newtheorem*{namthm*}{Theorem}
\crefname{myth}{Theorem}{Theorems} 
\newcounter{parentnumber}
\newcommand{\fhull}{\operatorname{f-conv}}
\newcommand{\cfhull}{\overline{\fhull}}
\newcommand{\lc}{\mathcal{L}}
\newcommand{\lciidseq}{\lc^\infty}
\newcommand{\seqG}{\mathtt{\Gamma}}
\newcommand{\lcproj}{\mathfrak{L}}
\renewcommand{\epsilon}{\varepsilon}
\newcommand{\lac}{\ll_{\mathrm{loc.}}}
\newcommand{\denp}[1]{Z^{\mathtt{#1}}}
\newcommand{\fccomb}[4]{ \left[ #1 \overset{#3, #4}{\longrightarrow} #2 \right] }
\newcommand{\indi}{\mathds{1}}
\newcommand{\leb}{\mathrm{Leb}}
\newcommand{\boundparam}{B}
\newcommand{\batchinterval}{I}
\begin{document}
\title{A Sequential Test for Log-Concavity}

\author{%
  Aditya Gangrade$^1$, Alessandro Rinaldo$^{1}$ and Aaditya Ramdas$^{12}$ \\
  \texttt{agangra2@andrew.cmu.edu, arinaldo@cmu.edu, aramdas@cmu.edu} \vspace{0.1in}\\
   $^1$Department of Statistics and Data Science, Carnegie Mellon University\\
   $^2$Machine Learning Department, Carnegie Mellon University\\
}
\date{}

\maketitle

\begin{abstract}
   On observing a sequence of i.i.d.\ data with distribution $P$ on $\reals^d$, we ask the question of how one can test the null hypothesis that $P$ has a log-concave density.
   This paper proves one interesting negative and positive result: the non-existence of test (super)martingales, and the consistency of universal inference. To elaborate, the set of log-concave distributions $\lc$ is a nonparametric class, which contains the set $\mathcal G$ of all possible Gaussians with any mean and covariance. Developing further the recent geometric concept of fork-convexity, we first prove that there do no exist any nontrivial test martingales or test supermartingales for $\mathcal G$ (a process that is simultaneously a nonnegative supermartingale for every distribution in $\mathcal G$), and hence also for its superset $\lc$. Due to this negative result, we turn our attention to constructing an e-process --- a process whose expectation at any stopping time  is at most one, under any distribution in $\lc$ --- which yields a  level-$\alpha$ test by simply thresholding at $1/\alpha$. We take the approach of universal inference, which avoids intractable likelihood asymptotics by taking the ratio of a nonanticipating likelihood over alternatives against the maximum likelihood under the null. Despite its conservatism, we show that the resulting test is consistent (power one), and derive its power against Hellinger alternatives. To the best of our knowledge, there is no other e-process or sequential test for $\lc$.
\end{abstract}

\section{Introduction}

Log-concavity is an important and prevalent modelling assumption in the modern study of shape-constrained nonparametrics \cite{samworth2018recent}. Log-concave distributions include many common families of densities, including normal, exponential, extreme-value, and logistic distributions, and further are frequently justified in diverse application domains including economics, reliability theory and filtering in engineering, and survival analysis in medicine \cite{bagnoli2006log}. At the same time, the family is technically amenable, and admits a unique maximum likelihood estimate with a well developed minimax theory and computationally efficient estimators \cite{cule2010theoretical, carpenter2018near, kur2019optimality, axelrod2019polynomial, dumbgen2011logcondens, rathke2019fast,cule2010maximum}. As a result, log-concave densities offer practitioners a broadly applicable and usable structure.

Given the attractive properties of estimation within the log-concave family, tests for membership in the same are an important and necessary line of investigation. We note that along with the applications mentioned above, such tests also have theoretical interest; for instance, in much of computational learning theory, efficient learning algorithms are only known when covariates are sampled according to a log-concave distribution \cite[e.g.][]{kalai2008agnostically}. While the estimation of log-concave densities has seen significant advances over the past decade or two (see, e.g., the citations above, and the survey by Samworth \cite{samworth2018recent}), testing for log-concavity has been relatively poorly developed. Indeed, prior to 2021, there were no valid and powerful tests for the same---both theoretically and practically---outside of certain restricted one-dimensional settings. In a significant development, recent work of Dunn et al. \cite{dunn2021-logConcaveTesting} has developed such a test, based on the Universal Inference strategy of Wasserman et al. \cite{wasserman2020universal}. 

Our work is concerned with testing log-concavity in a sequential setting. Concretely, we assume that we are given streaming access to a sequence $\{X_t\}$ that are drawn independently and identically from some $d$-dimensional density $p$, and we wish to test the membership of $p$ within the family of log-concave densities. Such a sequential test can be identified with a stopping time $\tau$, where stoppage indicates rejection of the null hypothesis, and the test is $\alpha$-valid if under the null, the probability that $\tau < \infty$ is bounded by $\alpha.$ The principal attractiveness of such sequential tests arises from their adaptivity: rather than fixing a number of samples a priori, the test may adapt to the difficulty of the underlying instance, rejecting earlier in easier settings, and allowing for a greater number of samples to detect subtle deviations from the null hypothesis. 

Below, we first set up some notation, and then proceed to contextualise our study, and give a brief overview of the contributions of our paper. 

\subsection{Problem setup and background}

We begin by describing the notation needed for our discussion, the testing problem under consideration, and the fundamental notions of test martingales and e-processes. We shall give further definitions and details in \S\ref{sec:def}, as well as later in the text as the context arises.

\noindent \emph{Spaces and measures.} Let $\{X_t\} = (X_1, X_2, \dots)$ denote a sequence of $d$-dimensional random vectors with entries indexed by $t$, which are measurable maps from $\Omega:= (\mathbb{R}^d)^{\mathbb{N}}$ to $\mathbb{R}^d,$ endowed with the cylindrical Borel sigma-algebra $\mathscr{B}(\mathbb{R}^d)^{\mathbb{N}}$. We use typewriter style fonts, e.g.\ $\mathtt{P},$ to denote laws of random processes (i.e. probability measures on $(\Omega, \mathscr{B}(\mathbb{R}^d)^{\mathbb{N}})$), and standard fonts, e.g.\ $P$ to denote laws on $(\mathbb{R}^d, \mathscr{B}(\mathbb{R}^d))$. We use $\mathscr{F} = \{ \mathscr{F}_t \}$ to denote the natural filtration of the process $\{X_t\}$, where $\mathscr{F}_t := \sigma(X_1,\dots,X_t)$, for each $t$. For a Borel probability measure $P$ on $\mathbb{R}^d$, we use $P^{\infty}$ to denote the law of an i.i.d.~process drawn according to $P$. We use $\mathcal{D}$ to denote the set of probability measures on $\mathbb{R}^d$ with Lebesgue densities, and $\mathcal{D}^\infty = \{ P^\infty: P \in \mathcal{D}\}$. For $P \in \mathcal{D},$ we use $p$ to denote its Lebesgue density. For technical convenience we define $\mathcal{D}_1 := \{ P \in \mathcal{D}: \mathbb{E}[ \max(0,\log p(X))] < \infty, \mathbb{E}[\|X\|] < \infty\}$. A set of laws $\mathcal{P}$ is said to be mutually absolutely continuous (m.a.c.) if for all $P,Q\in \mathcal{P}, P \ll Q \ll P$. Finally, we frequently use $X_1^t := (X_1, \dots, X_t)$ to denote finite prefixes of $\{X_t\}$. 

\noindent \emph{Log-concave measures.} A function $f : \mathbb{R}^d \to \mathbb{R}$ is said to be log-concave if there exists a concave function $g$ such that $f = e^g$. If $f$ is further a density with respect to the Lebesgue measure, then it is said to be a log-concave density. We denote the set of measures with log-concave densties as $\lc,$ and use $\lciidseq$ to denote the set of i.i.d. log-concave measures on euclidean sequences, i.e.\ $\lciidseq:= \{ P^\infty : P \in \lc\}.$ 

\paragraph{Sequential test for log-concavity.} The testing problem of interest is formulated as follows: Let $\smash{X_t \overset{\mathrm{i.i.d.}}\sim P}$ for some unknown $P \in \mathcal{D}.$ We wish to test the null hypothesis $H_0 : P \in \lc$. 

A sequential test corresponds to $\{\mathscr{F}_t\}$-adapted stopping time, representing the (possibly infinite) time at which the test stops and rejects the null hypothesis. We shall refer to this stopping time as the rejection time of the sequential test. A test is said to be $\alpha$-valid if its rejection time, $\tau$ satisfies that \[ \sup_{P \in \lc} P^{\infty}(\tau < \infty) \le \alpha, \]
meaning that, under the null, the probability of ever rejecting, i.e. of incurring a Type I error, is at most $\alpha$.
Similarly, a test is said to be asymptotically $(1-\beta)$-powerful against $\mathcal{Q} \subset \mathcal{D} \setminus \lc$ if the probability of  failing to reject the null under any distribution in the alternative $\mathcal{Q}$ (also know as a type II error) is uniformly bounded by $\beta$: 
\[ \inf_{Q \in \mathcal{Q}} Q^{\infty}(\tau = \infty) \le \beta.\] 
A test is said to be consistent against $\mathcal{Q}$ if it is asymptotically $1$-powerful against the same. Note that, when consistent, these tests are typically called `power-one tests' (following Robbins) to differentiate them from the traditional Waldian sequential testing paradigm for which stopping does not imply rejection of the null.

\paragraph{Test martingales, test supermartingales and e-processes.} We briefly survey key notions underlying our discussion, namely test martingales, and e-processes, leaving  details to \S\ref{sec:triviality} and \S\ref{sec:power} respectively. 

\begin{defi}
A process $\{M_t\}$ is a \emph{nonnegative supermartingale (NSM)} with respect to a filtration $\{\mathscr{F}_t\}$ and a law $\mathtt{P}$ if it is adapted, nonnegative, and $\mathbb{E}_{\mathtt{P}}[M_t | \mathscr{F}_{t-1}] \le M_{t-1}$ for each $t$. If the inequality is further an equality at each $t$, then $\{M_t\}$ is a \emph{nonnegative martingale (NM)}. We shall succinctly say that such a process is a $\mathtt{P}$-NSM or $\mathtt{P}$-NM respectively.
\end{defi}

Obviously, every $\mathtt{P}$-NM is also a $\mathtt{P}$-NSM. 
An important basic inequality of Ville~\cite{ville1939etude} controls the tail behaviour of NSMs: if $\{M_t\}$ is a $\mathtt{P}$-NSM such that $M_0 =1,$ then for every $\alpha \in (0,1],$ \[ \mathtt{P}( \exists t \ge 1: M_t \ge 1/\alpha) \le \alpha. \] The result above is a sequential (time-uniform) analogue of Markov's inequality. Equivalently, one can make claims at \emph{arbitrary} stopping times: for all stopping times $\tau$, $\mathtt{P}(M_\tau \ge 1/\alpha) \le \alpha$. This can be seen by applying the optional stopping theorem for NSMs \cite[Ch.~V, Thm.~28]{meyer1966probability} and Markov's inequality.

We now extend the above notions to \emph{composite families} of sequential laws. Throughout this paper we shall take the filtration to be the natural filtration of the data, and will leave it implicit in our definitions below.
\begin{defi}
For a set of sequential laws $\mathfrak{P},$ we say that a process $\{M_t\}$ is a \emph{$\mathfrak{P}$-NSM} if $\{M_t\}$ is a $\mathtt{P}$-NSM for every $\mathtt{P} \in \mathfrak{P}$. Similarly, $\{M_t\}$ is a $\mathfrak{P}$-NM if it is a $\mathtt{P}$-NM for every $\mathtt{P} \in \mathfrak{P}.$ A $\mathfrak{P}$-NSM such that $M_0 = 1$ is called a \emph{test supermartingale} for $\mathfrak{P}$, and a $\mathfrak{P}$-NM such that $M_0= 1$ is called a \emph{test martingale} for $\mathfrak{P}$.
\end{defi}

Observe that test supermartingales satisfy Ville's inequality for each $\mathtt{P} \in \mathfrak{P}$, i.e., if $\{M_t\}$ is a test supermartingale for $\mathfrak{P}$, then for every $\alpha \in (0,1],$ \begin{equation} \label{eq:villes_ineq} \forall \mathtt{P} \in \mathfrak{P}, ~\mathtt{P}( \exists t \ge 1 : M_t \ge 1/\alpha) \le \alpha. \end{equation}
Test supermartingales are so named because they form the canonical path to sequentially testing composite hypotheses, which is encapsulated entirely by the above relation, in that valid tests can be derived by rejecting only when a test supermartingale crosses a threshold. 
They are particularly interesting in nonparametric settings; for example one can use them to sequentially test the mean of a bounded random variable~\cite{waudby2020estimating}, for testing symmetry~\cite{ramdas2020admissible}, for two-sample testing~\cite{shekhar2021game}, independence testing~\cite{podkopaev2022sequential}, and testing calibration~\cite{arnold2021sequentially}, to mention only a few interesting sequential nonparametric problems.
We shall discuss test supermartingales extensively in this paper.

E-Processes \cite{ramdas2022game,ramdas2020admissible, grunwald2019safe, howard2020time} are a recently defined class of processes that will also play a central role in this paper. 
\begin{defi}
    A process $\{ E_t \}$  is called an e-process with respect to a sequential law $\mathtt{P}$ if it is non-negative, and for every stopping time $\tau,$ we have \( \mathbb{E}_{\mathtt{P}}[E_\tau] \le 1.\) Similarly, $\{ E_t \}$ is an e-process for a class of sequential laws $\mathfrak{P}$ if it is an e-process with respect to every $\mathtt{P} \in \mathfrak{P}$. 
\end{defi} E-Processes have a variety of equivalent definitions \cite[Lem.~6]{ramdas2020admissible}. In particular it is sufficient for the process to satisfy $\mathbb{E}_{\mathtt{P}}[E_\tau] \le 1$ for only bounded stopping times. 

By the optional stopping theorem (which holds without restriction on stopping times for nonnegative supermartingales), notice that every test supermartingale for a class $\mathfrak{P}$ is also an e-process for this class. Thus, e-processes generalise the notion of 
 test supermartingales. We observe that a Ville-type relation also holds for e-processes, simply due to Markov's inequality: if $E_t$ is an e-process for $\mathfrak{P},$ then for every $\alpha \in (0,1],$ \begin{equation} \label{eq:e-process_inequality}  \forall \mathtt{P} \in \mathfrak{P},  \textrm{ stopping times } \tau, 
 ~ \mathtt{P}(E_\tau \ge 1/\alpha) \le \alpha \mathbb{E}[E_\tau] \le \alpha. \end{equation} 

Much as Ville's inequality over the class (\ref{eq:villes_ineq}) captures the relevance of test supermartingales to sequential testing, the above inequality captures the relevance of e-processes to the same. The notion of e-processes, along with the non-sequential analogue of e-values, is gaining vogue in recent work in statistics due to this key property, along with the fact that e-processes exist for many composite and nonparametric testing problems for which test supermartingales do not exist (see, e.g., the recent survey by Ramdas et al. \cite{ramdas2022game}). We will also encounter this situation in the current paper.

It is important to note that test supermartingales or e-processes can directly be interpreted as evidence against the null hypothesis: since we expect them to be less than one under the null, the larger their realized value, the more evidence we have that the null hypothesis is wrong. Thus, there is no explicit need to threshold them at $1/\alpha$ for some prespecified $\alpha$; one can alternatively simply report the final value at the final stopping time of the experiment (which can itself be arbitrarily chosen). Nevertheless, we present this paper in the language of level-$\alpha$ tests because that is far more popular, and we refer the interested reader to the aforementioned references for further discussion on e-processes.

\subsection{Inadequacy of Test (Super)Martingales, and the Power of E-Processes} 

One dominant (but sometimes hidden) principle behind sequential testing of composite hypotheses is the use of nonnegative martingales (NMs), or nonnegative supermartingales (NSMs). Concretely, to test a composite hypothesis $\mathtt{P} \in \mathfrak{P},$ 
one attempts to construct a $\mathfrak{P}$-test supermartingale $\{M_t\}$, which was defined earlier. By Ville's inequality (\ref{eq:villes_ineq}), the chance that $M_t$ ever exceeds $1/\alpha$ under \emph{any} null law is bounded by $\alpha$. Thus, these test supermartingales immediately yield a valid test: reject the null when $M_t \ge 1/\alpha.$ The associated rejection time, of course, is the $M_t$-hitting time of $1/\alpha$. Such tests have game-theoretic interpretations, through the fact that nonnegative (super)martingales represent wealth processes in betting games \cite{ramdas2022game}. For example, a $\mathfrak{P}$-test martingale is the wealth process of a gambler who bets against the hypothesis that the sequence $\{X_t\}$ is drawn according some law in $\mathfrak{P}$. The game is designed so that the gambler cannot  hope to reliably (in expectation) make money if the null hypothesis is true; this is imposed by a restriction that under any law in $\mathfrak{P}$, the expected wealth multiplier in each round should be at most unity.

However, for sufficiently rich classes $\mathfrak{P},$ such a game leaves the gambler powerless; the gambler is so constrained by the aforementioned restriction that the only option is to not bet at all (or throw away money). This phenomenon was first observed in work on testing exchangability in discrete time binary processes by Ramdas et al. \cite{ramdas2022testingExchangability}, who demonstrated that any process $\{M_t\}$ that is an NSM for all exchangable binary laws is, almost surely, a strictly decreasing process (the wealth starts at one and can only possibly go down).  As a result, any test based on thresholding such processes must be powerless against any alternative.  Our first technical contribution demonstrates an anlogous phenomenon in the setting of log-concave distributions. Specifically, we show that the smaller class of i.i.d.\ Gaussian processes is not testable using NMs (or NSMs), since all such processes are trivial in the sense of being almost surely constant (or decreasing). The claim is summarised below, where $\mathcal{G}^\infty$ denotes the set of all i.i.d.\ Gaussian laws (of any mean and variance).

\begin{myth*}
     \emph{(Informal)} There are no nontrivial $\mathcal{G}^\infty$-NSMs or $\mathcal{G}^\infty$-NMs. A fortiori, there are also no nontrivial $\lc^\infty$-NSMs or $\lc^\infty$-NMs.
\end{myth*}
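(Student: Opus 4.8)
The plan is to prove the statement directly for the Gaussian class $\mathcal G^\infty$; the ``a fortiori'' claim is then immediate, since $\mathcal G\subset\lc$ gives $\mathcal G^\infty\subset\lc^\infty$, so every $\lc^\infty$-NSM (resp.\ NM) is in particular a $\mathcal G^\infty$-NSM (resp.\ NM). Here ``trivial'' means: for an NSM, almost surely non-increasing; for an NM, almost surely constant and equal to $M_0$. Let $\{M_t\}$ be a $\mathcal G^\infty$-NSM. Since $M_t$ is $\mathscr{F}_t$-measurable, write $M_t=m_t(X_1^t)$ for a Borel $m_t:(\reals^d)^t\to[0,\infty)$ (Doob--Dynkin), and note $M_t<\infty$ a.s.\ because $\E_{P^\infty}[M_t]\le M_0<\infty$ for every $P\in\mathcal G$. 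Under $P^\infty$ the coordinate $X_t$ is independent of $\mathscr{F}_{t-1}$ with law $P$, so the supermartingale inequality $\E_{P^\infty}[M_t\mid\mathscr{F}_{t-1}]\le M_{t-1}$ becomes: for every (nondegenerate) $P\in\mathcal G$ and Lebesgue-a.e.\ $x_1^{t-1}\in(\reals^d)^{t-1}$,
\[ \int_{\reals^d} m_t(x_1^{t-1},x)\,P(dx)\ \le\ m_{t-1}(x_1^{t-1}). \]
All full-rank Gaussians restricted to $\mathscr{F}_{t-1}$ are mutually absolutely continuous and equivalent to Lebesgue measure, so ``$P^\infty$-a.s.'' and ``Lebesgue-a.e.'' coincide here; and since a Gaussian density is bounded below by a positive constant on each bounded set, the display forces $m_t(x_1^{t-1},\cdot)\in L^1_{\mathrm{loc}}$ for a.e.\ $x_1^{t-1}$.

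The crux is to convert this whole family of integral inequalities into a pointwise bound, using that narrow Gaussians form an approximate identity. Writing $g_\sigma$ for the density of $\Gsn(0,\sigma^2 I_d)$, the display with $P=\Gsn(x_0,\sigma^2 I_d)$ says $(m_t(x_1^{t-1},\cdot)\ast g_\sigma)(x_0)\le m_{t-1}(x_1^{t-1})$ for all $\sigma>0$; letting $\sigma\downarrow0$ and invoking the standard fact that Gaussian mollifiers of a locally integrable function converge to its value at every Lebesgue point — the requisite finiteness of the convolutions being provided by the display itself — yields $m_t(x_1^{t-1},x_0)\le m_{t-1}(x_1^{t-1})$ for a.e.\ $x_0$. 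After standard measure-theoretic bookkeeping over the continuum of Gaussian laws (e.g.\ via a countable dense subfamily together with continuity of $(\mu,\sigma)\mapsto\int m_t(x_1^{t-1},\cdot)\,d\Gsn(\mu,\sigma^2 I_d)$, or via Tonelli applied to the jointly measurable excess set), this gives $M_t\le M_{t-1}$ Lebesgue-a.e.\ on $(\reals^d)^t$, hence $P^\infty$-a.s.\ for every $P\in\mathcal G$; a countable union over $t$ shows $\{M_t\}$ is a.s.\ non-increasing, settling the NSM case. If moreover $\{M_t\}$ is a $\mathcal G^\infty$-NM, then $\E_{P^\infty}[M_{t-1}-M_t\mid\mathscr{F}_{t-1}]=0$ while $M_{t-1}-M_t\ge0$ a.s.\ by the above, forcing $M_t=M_{t-1}$ a.s.; iterating gives $M_t=M_0$ a.s.\ for all $t$.

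I expect the only genuine obstacle to be this approximate-identity step: extracting a truly pointwise (in the fresh coordinate) inequality from the $\sigma\downarrow0$ limit while $m_t$ is merely measurable, and carrying out the bookkeeping over the continuum of null sets indexed by the Gaussian parameters. The underlying content is simply that the closed convex hull of the Gaussians exhausts all densities. A cleaner-looking route — and essentially what the fork-convexity machinery developed later in the paper is designed to do — is to first show that the closed fork-convex hull of $\mathcal G^\infty$ already contains a large i.i.d.\ class such as $\mathcal D_1^\infty$; this reduces the claim to the triviality of $\mathcal D_1^\infty$-NSMs, for which one may test the per-step inequality directly against uniform densities on bounded sets, obtaining the pointwise conclusion with no mollification at all.
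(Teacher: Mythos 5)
Your proposal is correct in substance, but it takes a genuinely different route from the paper. The paper never touches the one-step conditional inequality directly; instead it develops fork-convexity machinery (Lemmas 2--5) to show that the closed fork-convex hull of $\mathcal{G}^\infty$ contains all independent product laws $\bigotimes\mathcal{D}$, and then derives a contradiction by testing a putative increase event $\mathsf{A}_t^n=\{M_t\ge M_{t-1}+1/n,\,M_{t-1}\le n\}$ against uniform laws on well-chosen open rectangles (Lemma 6) after truncating $M$ at $n+1$ --- an entirely elementary, mollification-free endgame. Your argument is the ``direct skinny-Gaussian'' attack that the paper's own closing remark in \S\ref{sec:no_martingales_proof} explicitly contemplates and declines: you fix a history, read the supermartingale inequality as $(m_t(x_1^{t-1},\cdot)\ast g_\sigma)(x_0)\le m_{t-1}(x_1^{t-1})$, and let $\sigma\downarrow 0$ at Lebesgue points. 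This works, and is shorter and self-contained for the triviality claim alone, but note the two places where the real work sits: (i) the far-field part of the approximate-identity limit needs the comparison $\int_{|y|>\delta}f(x_0-y)g_\sigma(y)\,dy\le\sup_{|y|>\delta}\tfrac{g_\sigma(y)}{g_1(y)}\cdot(f\ast g_1)(x_0)\to 0$, since $f=m_t(x_1^{t-1},\cdot)$ is only locally integrable (the textbook Lebesgue-point theorem assumes $f\in L^1$); and (ii) the null-set bookkeeping over the continuum of Gaussians, which is handled by a countable dense parameter set plus Fatou (your Tonelli variant gives only ``a.e.\ $(\mu,\sigma)$,'' which still suffices after another Fubini step but needs saying). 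What the paper's longer route buys is precisely what your last paragraph anticipates: a reusable characterisation of $\cfhull(\mathcal{P}^\infty)$ for m.a.c.\ classes, which the authors regard as a contribution in its own right and which replaces mollification by the Lebesgue differentiation flavour of Lemma 6. Your NM reduction (nonnegative increments with zero conditional mean vanish a.s.) matches the paper's.
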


Thus, log-concave densities represent a natural class of distributions that cannot be tested via martingales.

\paragraph{Testing via E-Processes.} Given that one cannot test for log-concavity (or indeed, Gaussianity) using nonegative (super)martingales, we are left in a situation where the prevalent design paradigm for sequential testing is neutralised. There are two contrasting lines of attack that can be employed instead. 

The first of these involves designing a restricted filtration $\mathscr{G}_t$, distinct from the natural filtration, under which there might exist nontrivial test supermartingales. Ramdas et al.~\cite{ramdas2022testingExchangability,ramdas2022game} highlight the remarkable 
 fact that shrinking a filtration could introduce new nontrivial (composite) test martingales when none existed in the original filtration. Such a strategy was notably used by Vovk et al.~\cite{vovk2003testing, fedorova2012plug} to develop a sequential test for exchangeability, where as mentioned above, no nontrivial test supermartingales exist in the data filtration. 
There are two main disadvantages to such an approach. First, such test martingales only yield an e-process for a restricted set of stopping times (those under the restricted filtration).
From an applied point of view, the use of such an e-process demands discipline from a practitioner---they cannot look at the raw data to decide when to adaptively stop (a predefined stopping rule, like the hitting time of $1/\alpha$ is okay, but it may never be reached, in which case we may still wish to present the obtained evidence at the stopping time). 
Second, from a design point of view, the construction of appropriate filtrations is itself a subtle task that is heavily problem-dependent, and thus designing such tests is more of an art than a science. In particular, no such construction is known or obvious for sequential log-concavity testing.

In contrast, we follow the alternative strategy of testing via an e-process. Recall that a process $\{E_t\}$ is an e-process for a set of sequential laws $\mathfrak{P}$ if, for every stopping time $\tau$ and every $\mathtt{P} \in \mathfrak{P}, \mathbb{E}_{\mathtt{P}}[E_\tau] \le 1.$ Such processes bear a deep relationship to the aforementioned test martingales. Indeed, it has been argued that (admissible) e-processes must take the form $\inf_{\mathtt{P} \in \mathfrak{P}} M_t^{\mathtt{P}},$ where each $\{M_t^{\mathtt{P}}\}$ is a $\mathtt{P}$-NM \cite{ramdas2020admissible}. The same observation lends e-processes a gambling interpretation as the wealth process of a gambler against a `family of games', wherein the gambler simultaneously plays a game against each $\mathtt{P} \in \mathfrak{P},$ and their wealth is taken as the smallest wealth amongst these games. The gambler can then make money only if each of these games makes money, i.e., if $\forall \mathtt{P} \in \mathfrak{P}, M_t^{\mathtt{P}}$ grows without bound, which would then indicate that every $\mathtt{P} \in \mathfrak{P}$ can be rejected.

E-Processes offer a similar testing approach as the previously discussed test supermartingales, as elucidated by the inequality (\ref{eq:e-process_inequality}). Indeed, given an e-process $\{E_t\}$ for $\mathfrak{P},$ we can construct an $\alpha$-valid test of membership in $\mathfrak{P}$ by rejecting only if $E_t \ge 1/\alpha$. Indeed, in this case, the rejection time is \[ \tau_\alpha := \inf\{t \ge 1: E_t \ge 1/\alpha\}, \] and using the inequality (\ref{eq:e-process_inequality}), we may conclude that \[  \forall \mathtt{P} \in \mathfrak{P}, \mathtt{P}(\tau_\alpha < \infty) \le \alpha, \] i.e. this test is valid for the composite null $\mathfrak{P}$. Note further that the validity extends beyond this: let $\sigma$ be any other stopping time with respect to the natural filtration of the data. We further have that $\mathtt{P}(E_{\sigma} \ge 1/\alpha) \le \alpha,$ and thus no extraneous stopping criterion can affect the validity of the test, as long as rejection occurs only if $E_\sigma \ge 1/\alpha$. 

The theory and applications of e-processes have seen considerable development in the recent literature on sequential analysis (along with the more basic notion of e-variables in batched settings). 
The concept is attractive thanks to its flexibility and simplicity (despite generalizing nonnegative martingales), but constructing powerful e-processes is partly science and partly art \cite{ramdas2022game}. In  composite testing,  e-processes are of central importance since they do not encounter the same pitfalls as NSMs and NMs, and there do indeed exist nontrivial e-processes even on classes where no such NSMs exist. Indeed, in some sense, e-processes can be shown to lie at the very core of sequential composite testing~\cite{ruf2022composite}.

\subsection{Test Using Universal Likelihood Ratios: A simple E-Process} The universal inference strategy  \cite{wasserman2020universal} gives a simple and generic construction of e-processes when a maximum likelihood estimate can be easily computed.

To contextualise this approach, we first consider the case of a point null and alternative $P^{\infty}$ and $Q^{\infty}$. In this case, classical sequential testing theory posits that the sequential likelihood ratio \[ L_t = \prod_{s = 1}^t \frac{q(X_s)}{p(X_s)}\] yields a valid and powerful test upon thresholding at $1/\alpha$. Indeed, under the null, $\{L_t\}$ is an e-process, since it is an NM. 

Against simple nulls but composite alternatives, likelihood ratios such as the above are typically adjusted to account for the variety of possible alternatives. One  way to do this is to replace the above numerator with an estimate $\hat{q}_s(X_s)$. Importantly, as long as this $\hat{q}_s$ is nonanticipating, i.e., is $\mathscr{F}_{t-1}$-measurable (depending only on the first $t-1$ datapoints), the martingale property continues to hold. To highlight this nonanticipation, we shall denote these estimators as $\hat{q}_{s-1}$.
A second option is to mix over alternatives, perhaps using some non-informative ``prior'', but we will go with the first option in this paper because we are dealing with a highly nonparametric alternative (essentially the complement of all log-concave laws, or the unspecified subset of those against which one may hope to have power) --- it is easy to use kernel density estimates for $\hat q_s$, but not so easy to mix over such a loosely specified nonparametric alternative.

The sequential universal likelihood ratio statistic (ULR) extends the above to composite nulls when a maximum likelihood estimator (MLE) is computable. Concretely, the statistic is as follows: let $\hat{q}_{t-1}$ be any predictable probability density, that is $\hat{q}_{t-1}$ may be expressed as a function of only $\{X_1, \dots, X_{t-1}\}$ and additional independent randomness. As before, we should think of $\hat{q}$ as trying to estimate the underlying law $p$. Let $\hat{p}_t$ be the MLE over the null class $\lc$ with the data $X_1^t$, i.e., \[ \hat{p}_t = \argmax_{\hat{p} \in \lc} \sum_{s \le t} \log\hat{p}(X_s).\] Notice that, unlike $\hat q_{t-1}$, the MLE $\hat{p}_t$ makes use of $X_t$. The sequential ULR statistic is the process  %
\[ R_t := \prod_{s \le t} \frac{ \hat{q}_{s-1}(X_{s})}{\hat{p}_t(X_s)}.
\]
(Of course, if the numerator was simply $\prod_{s \le t} \hat q_t(X_s)$, where $\hat q_t$ is an MLE over a larger class calculated using $\{X_1,\dots,X_t\}$, then we would get the usual generalized likelihood ratio process. However, we will handle very rich nonparametric alternatives over which computing the MLE is for all practical purposes impossible, and further, for irregular models like log-concave distributions, such generalized likelihood ratios are very ill-behaved and not well understood.)

The principal factor underlying the utility of $R_t$ is that it is an e-process. Indeed, for any $P \in \lc,$ and $t$,  $R_t$ is dominated by $F_t(P) = \prod_{s\le t} \hat{q}_{s-1}(X_s)/p(X_s)$. Further, $\{F_t(P)\}$ is a $P^\infty$-martingale started at $1$, due to the predictability of $\hat{q},$ and thus for any stopping time $\tau$,  \[ \mathbb{E}_{P^\infty}[R_\tau] \le \mathbb{E}_{P^\infty}[F_\tau(P)] \le 1.\]  
Notice in the argument above that while the e-process is dominated by a $P^\infty$-martingale, it is not itself a martingale. Indeed, this property is crucial to the existence of nontrivial e-processes even when there are no such test martingales. We note that this property of domination by a $P^\infty$-NM for every $P \in \lc$ (or in general $\mathtt{P}$-NM for $\mathtt{P} \in \mathfrak{P}$) is equivalent to the e-process property itself, and can be taken as an alternate definition of the same \cite[Lem.~6]{ramdas2020admissible}.

Due to the above observation, the ULR e-process yields a valid test upon thresholding at $1/\alpha.$ The power of any such test relies on the two aspects of how well $\hat{p}_t$ and $\{\hat{q}_s\}_{s \leq t}$ estimate the underlying law $p$. Indeed, we argue in \S\ref{sec:power} that if $p \not\in \lc,$ then $\prod_{s \leq t} p(X_s)/{\hat{p}_t (X_s)}$ must grow exponentially with $t$. Thus, as long as the sequential estimates $\hat{q}_{t}$ approximate $p$ well in a cumulative regret sense, the procedure above must be consistent. Concretely, define the \emph{regret of prediction using $\{\hat{q}_t\}$} as \[ \rho_t(\hat{q};P) := \sum_{s \le t} (-\log \hat{q}_{s-1}(X_s))  - \sum_{s \le t} (-\log p(X_s)),\] 
so that better estimation results in lower regret,
and define the `well-estimable' class \[ \mathcal{Q}(\hat{q}) :=\{ P : \rho_t(\hat{q};P)/t \to 0 \textrm{ $P$-a.s. as } t \nearrow \infty \}. \] 
In  Section~\ref{sec:power} we show the following: 
\begin{myth*} \emph{(Informal)}
    Let $R_t$ denote the ULR e-process with the sequential estimator $\mathscr{E}.$ Then the test that rejects when $R_t \ge 1/\alpha$ is $\alpha$-valid, and consistent against $\mathcal{Q}(\hat{q})$.  
\end{myth*}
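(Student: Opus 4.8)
The plan is to treat validity and consistency separately. For validity, I would simply run the domination argument sketched above, uniformly over the null. Fix $P\in\lc$. Since $\hat p_t$ maximizes the empirical log-likelihood over $\lc$ and $p$ is feasible, $\prod_{s\le t}\hat p_t(X_s)\ge\prod_{s\le t}p(X_s)$, so $R_t\le F_t(P):=\prod_{s\le t}\hat q_{s-1}(X_s)/p(X_s)$ pointwise. Because $\hat q_{s-1}$ is predictable and a probability density, $\{F_t(P)\}$ is a nonnegative $P^\infty$-supermartingale with $F_0(P)=1$, so the optional stopping theorem for NSMs (valid for arbitrary stopping times) gives $\mathbb{E}_{P^\infty}[R_\tau]\le\mathbb{E}_{P^\infty}[F_\tau(P)]\le 1$ for every stopping time $\tau$; hence $\{R_t\}$ is an $\lciidseq$-e-process. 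Applying \eqref{eq:e-process_inequality} to the stopping times $\tau_\alpha\wedge T$ and letting $T\to\infty$ gives $P^\infty(\tau_\alpha<\infty)\le\alpha$, and taking the supremum over $P\in\lc$ yields $\alpha$-validity.

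For consistency, fix any $P\in\mathcal{Q}(\hat q)\setminus\lc$ with $P\in\mathcal{D}_1$; it suffices to show $R_t\to\infty$ $P^\infty$-a.s. Using $\rho_t(\hat q;P)=\sum_{s\le t}\log\big(p(X_s)/\hat q_{s-1}(X_s)\big)$, one has the exact identity
\[
\frac1t\log R_t=\underbrace{\frac1t\sum_{s\le t}\log\frac{p(X_s)}{\hat p_t(X_s)}}_{=:A_t}-\frac1t\rho_t(\hat q;P),
\]
whose second term tends to $0$ $P$-a.s.\ by the definition of $\mathcal{Q}(\hat q)$. So the whole problem reduces to showing $\liminf_t A_t\ge\kappa(P)$ a.s.\ for a strictly positive constant $\kappa(P):=\inf_{\bar P\in\lc}\mathrm{KL}(P\,\|\,\bar P)$; then $\liminf_t\tfrac1t\log R_t\ge\kappa(P)>0$, so $R_t\to\infty$ and $\tau_\alpha<\infty$.

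The lower bound on $A_t$ is the crux, and for it I would lean on the theory of the log-concave MLE under misspecification. Write $A_t=\frac1t\sum_{s\le t}\log p(X_s)-\frac1t\sup_{\bar p\in\lc}\sum_{s\le t}\log\bar p(X_s)$. The strong law of large numbers handles the first term: under $P\in\mathcal{D}_1$ it converges to $\mathbb{E}_P[\log p(X)]$. For the maximized empirical log-likelihood I would use (i) that $P$ has a unique log-concave Kullback--Leibler projection $P^\star$ attaining $\kappa(P)$, with $\kappa(P)>0$ precisely because $P\notin\lc$ (D\"umbgen--Samworth--Schuhmacher), together with (ii) a.s.\ consistency of $\hat p_t$ for $p^\star$ and a uniform-law-of-large-numbers / bracketing bound over $\lc$, yielding $\limsup_t\frac1t\sup_{\bar p\in\lc}\sum_{s\le t}\log\bar p(X_s)\le\sup_{\bar p\in\lc}\mathbb{E}_P[\log\bar p(X)]=\mathbb{E}_P[\log p(X)]-\kappa(P)$ a.s.\ (Cule--Samworth \cite{cule2010theoretical}; the corresponding batch ``the log-concave MLE cannot fit a non-log-concave truth too well'' bound appears in Dunn et al.\ \cite{dunn2021-logConcaveTesting}). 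Subtracting the two displays gives $\liminf_t A_t\ge\kappa(P)>0$, which closes the argument.

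The main obstacle is precisely step (ii): upgrading the batch-style MLE risk bound to an almost-sure statement holding for all large $t$, while handling the unboundedness of $\log\bar p$ over the rich class $\lc$ and carrying out the moment bookkeeping that guarantees $\kappa(P)$ is well defined and strictly positive (this is exactly what the integrability built into $\mathcal{D}_1$ and the existence/uniqueness of the log-concave projection are for). Everything else — the domination argument, the regret decomposition, and the appeals to the strong law and to optional stopping — is routine.
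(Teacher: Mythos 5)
Your validity argument coincides with the paper's: domination of $R_t$ by the predictable-plug-in likelihood ratio $F_t(P)$, optional stopping, and Markov. That half is fine.

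The consistency half has the right skeleton (the decomposition $\log R_t = \sigma_t(p) - \rho_t$, with $\rho_t/t\to 0$ on $\mathcal{Q}(\hat q)$, reducing everything to a linear lower bound on the maximized-likelihood gap $\sigma_t$), but the step you defer — your step (ii) — is not a technicality to be "leaned on"; it is the entire content of the theorem, and the route you sketch for it does not go through as stated. A uniform law of large numbers of the form $\limsup_t \frac1t\sup_{\bar p\in\lc}\sum_{s\le t}\log\bar p(X_s)\le\sup_{\bar p\in\lc}\mathbb{E}_P[\log\bar p(X)]$ over the whole class $\lc$ is exactly what one cannot invoke off the shelf: the log-densities $\log\bar p$ are unbounded above and below over $\lc$, and the class is non-Donsker for $d\ge 4$ (the paper's Lemma~\ref{lem:lc_entropy_bound} makes this explicit), so neither a Glivenko--Cantelli theorem for this function class nor a global bracketing bound is available in the generality you need. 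The paper's proof of Lemma~\ref{lem:growth_of_sigma_t} avoids the global ULLN entirely by a localization: Dunn et al.'s Lemma (the paper's Lemma~\ref{lem:small_brack}, built on Cule--Samworth) shows that $\hat p_t$ a.s.\ eventually lies in a \emph{single} Hellinger bracket $[u^*,v^*]$ of arbitrarily small size around $\lcproj_P$; on that event $\sigma_t$ equals the infimum of the log-likelihood ratio over that one bracket, whose bracketing entropy at the relevant scale is zero, so Wong--Shen's tail bound applies with an exponentially small failure probability and Borel--Cantelli finishes. Without this localization (or some substitute for it), your argument is circular: the "a.s.\ consistency of $\hat p_t$ plus a ULLN" you cite is precisely the statement you are trying to prove.

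A second, smaller discrepancy: you claim $\liminf_t A_t\ge\kappa(P)=\inf_{\bar P\in\lc}\mathrm{KL}(P\|\bar P)$, whereas the paper only establishes (and only needs) $\liminf_t \sigma_t(P)/t\ge d_H^2(P,\lcproj_P)/25$. Since $d_H^2\le\mathrm{KL}$, your claimed bound is strictly stronger than what the localized Wong--Shen argument delivers; Wong--Shen is intrinsically a Hellinger-separation result, so if you want the KL-rate constant you would need a different large-deviations input. For bare consistency either separation suffices (both are strictly positive off the null), but as written your quantitative claim is not supported by the tools you cite.
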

In fact, in \S\ref{sec:power}, we demonstrate a more refined version of the above statement, which allows $\rho_t$ to grow linearly, but at a rate bounded by the distance of $p$ from log-concavity. In any case, we comment that the class $\mathcal{Q}(\cdot)$ above is quite rich. For instance, using sieve estimators yields low-regret estimation in the above log-loss sense for nonparametric classes such as laws on compact intervals with smooth and bounded densities. The ULR e-process thus gives a powerful test for log-concavity against a rich set of alternates, even though no test martingale can deliver such properties. Our work thus offers further insight into the sequential testing of rich composite nulls, and the primacy that e-processes must take in the modern study of the same.

Along with the above asymptotic consistency result, we further derive  finite rejection rate bounds by controlling the typical rejection time of the ULR e-process in terms of the Hellinger distance of the alternaive law from log-concavity. In particular, we show explicit bounds on typical rejection times against Lipschitz and bounded laws on the unit box.
The above theoretical exploration is augmented with simulation studies on a simple parametric family comprising a mixture of two Gaussians to empirically evaluate the validity and power of the test. We find that in small dimensions $d \le 3,$ the tests show excellent validity, as well as reasonable power. We further use this simulation study to highlight the role of the quality of the estimators $\hat{q}_t$ in the power of the test.

\paragraph{Summary of Contributions}

To summarise, this paper is concerned with the theoretical and methodological aspects of sequential testing for log-concavity. We first show a negative result that demonstrates that the  approach of constructing test (super)martingales is powerless for testing this class of laws, and along the way also offering simple characterisations of the fork-convex hull of i.i.d.~sequential laws. In the positive direction, we propose using the Universal Inference based e-process as a way to test log-concavity in the absence of test martingales. We theoretically demonstrate both the consistency of the resulting sequential test, along with concrete adaptive bounds on typical rejection time under a wide class of alternatives, and illustrate the same via simulation studies.  

\section{Definitions, and Background on Log-Concave Distributions} \label{sec:def}

We begin with basic background on log-concave distributions, and necessary notation. We refer the reader to the survey of Samuard and Wellner for further details \cite{logconcavity_survey}.

\paragraph{Log-Concave Laws.} A distribution $P$ on $(\mathbb{R}^d, \mathscr{B}(\mathbb{R}^d))$ is called logarithmically concave (henceforth log-concave) if for every pair of compact sets $A,B$ and $\lambda \in (0,1),$ \[ P(\lambda A + (1-\lambda)B) \ge P(A)^{\lambda} P(B)^{1-\lambda}, \] $\lambda A + (1-\lambda) B$ is the Minkowski sum $\{ \lambda x + (1-\lambda)y : x \in A, y \in B\}.$ It is well known that a distribution that admits a density with respect to the Lebesgue measure is log-concave if and only if $P(\mathrm{d}x) = e^{g(x)} \mathrm{d}x$ for a concave function $g$. Recall that $\lc$ denotes the class of log-concave distributions with density on $\mathbb{R}^d$, while $\lc^\infty$ denote the set of i.i.d.~sequential laws $P^\infty$ for $P \in \lc$. 

\paragraph{Log-Concave M-projection.} Recall that $\mathcal{D}$ denotes the set of laws on $(\mathbb{R}^d, \mathscr{B}(\mathbb{R}^d))$ that admit densities with respect to the Lebesgue measure, and that \[ \mathcal{D}_1:= \{ P \in \mathcal{D}: \mathbb{E}[\max(0, \log p(X))] < \infty, \mathbb{E}[\|X\|] < \infty\},\] where $p(\cdot )$ is the density of $P.$ For every $P \in \mathcal{D}_1,$ there exists a unique law \[ \lcproj_P := \argmin_{L \in \lc} \mathrm{KL}(P\|L),\] where $\mathrm{KL}(\cdot\|\cdot)$ is the KL-divergence, called its \emph{log-concave M-projection}. We shall abuse notation and use $\lcproj_p$ to denote the Lebesgue density of $\lcproj_P$ (one is admitted as long as $P \in \mathcal{D}_1$). For a set of points $\{x_1^t\}$, $t \ge d+1,$ the \emph{log-concave maximum likelihood estimator (MLE)} is the log-concave M-projection of the empirical law $P_t = \sum_{s \le t} \delta_{x_s}/t,$ denoted $\hat{P}_t$. Most commonly, we shall refer to its Lebesgue density, $\hat{p}_t,$ which may equivalently be defined as \[ \hat{p}_t := \argmax_{ \substack{\log f \textrm{ is a concave function}\\ f \ge 0, \int f = 1}} \sum_{s \le t} \log f(x_s). \] The log-concave MLE has extremely favourable theoretical properties when $X_1^t \overset{\mathrm{i.i.d.}}{\sim} P$ for some $P \in \mathcal{D}_1$. For instance $\hat{p}_t \to \lcproj_p$ in the strong sense that $\exists a > 0 : \int e^{a\|x\|} |\hat{p}_t(x) - \lcproj_p(x)| \to 0$ almost surely \cite{cule2010theoretical}. 

\paragraph{Locally Absolutely Continuous Sequential Measures.} Let $\Gamma$ denote the standard Gaussian law on $\mathbb{R}^d$, $\gamma$ its density, and let $\mathtt{\Gamma} = \Gamma^\infty.$ Notice that $\mathtt{\Gamma}$ is the law of a white noise. A sequential law $\mathtt{P}$ is said to be \emph{locally absolutely continuous} (l.a.c.) with respect to $\mathtt{\Gamma},$ denoted $\mathtt{P} \lac \mathtt{\Gamma},$ if for all $t$, the law of the finite prefix $\mathtt{P}|_t(\cdot) := \mathtt{P}(X_1^t \in \cdot)$ is absolutely continuous with respect to $\mathtt{\Gamma}|_t$. Such l.a.c.~laws admit a \emph{density process}, denoted \[ \denp{P}_t := \frac{\mathrm{d}\mathtt{P}|_t}{\mathrm{d}\mathtt{\Gamma}|_t}. \]
As an example, if $\mathtt{P} = P^\infty$ for some law $P$ with Lebesgue density $p$, then $\mathtt{P} \lac \mathtt{\Gamma},$ and $\denp{P}_t = \prod_{s \le t} p(X_s)/\gamma(X_s)$. Of course, we may specify sequential laws (that are $\lac \mathtt{\Gamma}$) by specifying their density processes. Note that $Z_t^\mathtt{P}$ is a likelihood ratio process with respect to $\mathtt{\Gamma}$, and so is a $\mathtt{\Gamma}$-martingale. We shall henceforth use $\mathtt{\Gamma}$ as a reference measure for sequential laws, and almost entirely work under laws that are $\lac \mathtt{\Gamma}$.

Notice that if $\denp{P}_{t-1} > 0$ then for $\{X_t\} \sim \mathtt{P},$ $\denp{P}_t/\denp{P}_{t-1}$ is the $\mathtt{\Gamma}$-conditional density of $X_t$ given $X_1^{t-1}.$ Further, $\denp{P}_{t-1} = 0 \implies \denp{P}_t = 0.$ As a result, we may write for any adapted process $\{M_t\}$ that \[ \denp{P}_{t-1} \mathbb{E}_{\mathtt{P}}[ M_t(X_1^t) |\mathscr{F}_{t-1}] = \denp{P}_{t-1} \mathbb{E}_{\mathtt{P}}[ M_t(X_1^t)\indi\{\denp{P}_t > 0\} |\mathscr{F}_{t-1}] = \mathbb{E}_{\mathtt{\Gamma}}[ M_t \denp{P}_t \indi\{\denp{P}_t > 0\}|\mathscr{F}_{t-1}] = \mathbb{E}_{\mathtt{\Gamma}}[M_t \denp{P}_t|\mathscr{F}_{t-1}]. \]

 From this, we observe that a process $\{M_t\}$ is a $\mathtt{P}$-NSM if and only if $\{Z_t^{\mathtt{P}} M_t\}$ is a $\mathtt{\Gamma}$-NSM. Indeed, if the former is true, then we conclude from the above that $\mathbb{E}_{\mathtt{\Gamma}}[M_t\denp{P}_t ] \le \denp{P}_{t-1} M_{t-1},$ while if the latter is true, then we can conclude that $\denp{P}_{t-1}\mathbb{E}_{\mathtt{P}}[M_t |\mathscr{F}_{t-1}] \le \denp{P}_{t-1}M_{t-1} \iff \indi\{\denp{P}_{t-1} > 0\}\mathbb{E}_{\mathtt{P}}[M_t |\mathscr{F}_{t-1}] \le \indi\{\denp{P}_{t-1} > 0\} M_{t-1}.$ Since $\indi\{\denp{P}_{t-1} > 0\}$ holds $\mathtt{P}$-a.s., it follows that $\mathbb{E}_{\mathtt{P}}[M_t|\mathscr{F}_{t-1}] \le M_{t-1}$ $\mathtt{P}$-a.s., and so $M_t$ is a $\mathtt{P}$-NSM. By maintaining equalities in the above analysis, the analogous statement also holds for NMs. These facts are quite useful in our later study of fork-convex hulls.

\section{There Are No Nontrivial Test Supermartingales for Log-concavity}\label{sec:triviality}

We begin with defining a natural notion of triviality. 

\begin{defi}
An NSM $\{M_t\}$ is said to be \emph{trivial} if \( \seqG(\exists t: M_t > M_{t-1}) =  0.\) An NM $\{M_t\}$ is said to be trivial if \( \seqG(\exists t: M_t \neq M_{t-1}) = 0. \)
\end{defi}
In words, a  NSM (NM) is trivial if, almost surely, it is a non-increasing (constant) process. For the remainder of this section, we will set $\{\mathscr{F}_t\}$ to be the natural filtration. We recall the notion of test supermartingales for a class of laws $\mathfrak{P}$, which we shall refer to as just nonnegative supermartingales.

\begin{defi}
For a set sequential laws $\mathfrak{P},$ we say that a process $\{M_t\}$ is a \emph{$\mathfrak{P}$-NSM} if $\{M_t\}$ is a $\mathtt{P}$-NSM for every $\mathtt{P} \in \mathfrak{P}$. Similarly, $\{M_t\}$ is a $\mathfrak{P}$-NM is it is a $\mathtt{P}$-NM for every $\mathtt{P} \in \mathfrak{P}.$ 
\end{defi}
With these definitions in hand, we state the main result of this section, the proof of which is left to \S\ref{sec:no_martingales_proof}.
\begin{myth}\label{thm:triviality}
There are no nontrivial $\mathcal{G}^\infty$-NSMs or $\mathcal{G}^\infty$-NMs under the natural filtration, and a fortiori, there are no nontrivial $\lciidseq$-NSMs or $\lciidseq$-NMs under the natural filtration.
\end{myth}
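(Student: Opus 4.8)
The plan is to show that any $\mathcal{G}^\infty$-NSM $\{M_t\}$ must be trivial, i.e., almost surely non-increasing under $\seqG = \Gamma^\infty$, with the NM case following by the same argument applied to both $M_t$ and (in spirit) $-M_t$. The key leverage is the reduction from the end of \S\ref{sec:def}: a process $\{M_t\}$ is a $\mathtt{P}$-NSM if and only if $\{Z_t^{\mathtt{P}} M_t\}$ is a $\seqG$-NSM. Since every $G^\infty \in \mathcal{G}^\infty$ has density process $Z_t^{G^\infty} = \prod_{s \le t} \phi_{\mu,\Sigma}(X_s)/\gamma(X_s)$ for a Gaussian density $\phi_{\mu,\Sigma}$, the hypothesis becomes: $Z_t^{G^\infty} M_t$ is a $\seqG$-NSM simultaneously for \emph{every} choice of $(\mu,\Sigma)$. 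I would translate this into a one-step drift condition under $\seqG$: writing $D_t := M_{t-1} - \E_{\seqG}[M_t \mid \mathscr{F}_{t-1}] \ge 0$ (the $\seqG$-drift of $M$ itself, which is already nonnegative since $\seqG = \Gamma^\infty \in \mathcal{G}^\infty$), the supermartingale requirement for the $(\mu,\Sigma)$-twisted process should reduce, after cancelling $Z_{t-1}$, to an inequality of the form $\E_{\seqG}[\, w_{\mu,\Sigma}(X_t)\, (M_{t-1} - M_t) \mid \mathscr{F}_{t-1}] \ge 0$ where $w_{\mu,\Sigma}(x) = \phi_{\mu,\Sigma}(x)/\gamma(x)$ is a Gaussian-likelihood-ratio weight. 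So the real content is: if $\E_{\Gamma}[\, w_{\mu,\Sigma}(X)\, g(X)\,] \ge 0$ for \emph{all} Gaussian parameters $(\mu,\Sigma)$, where $g(x) = \E_{\seqG}[M_{t-1}-M_t \mid X_t = x, \mathscr{F}_{t-1}]$ is a fixed (conditionally-defined) function with $\E_\Gamma[g(X)] = D_t \ge 0$, then in fact $g \equiv 0$ $\Gamma$-a.s., hence $D_t = 0$ and moreover $M_t = M_{t-1}$ a.s.\ on that step.

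The heart of the argument is therefore the following deterministic/analytic claim: the family $\{w_{\mu,\Sigma} : (\mu,\Sigma)\}$ of ratios of Gaussian densities to the standard Gaussian density is rich enough that $\int w_{\mu,\Sigma}\, g\, d\Gamma \ge 0$ for all $(\mu,\Sigma)$ forces $g \ge 0$ a.e., and then combined with the reverse inequality obtained by letting $(\mu,\Sigma) \to (0, I)$ (which gives $w \to 1$, hence $\int g\, d\Gamma \le$ something, forcing $\int g\, d\Gamma = 0$ when combined appropriately) one concludes $g = 0$. I would make this precise by noting $w_{\mu,\Sigma}(x)\gamma(x) = \phi_{\mu,\Sigma}(x)$, so the condition is $\int \phi_{\mu,\Sigma}(x) g(x)\, dx \ge 0$ for every Gaussian $\phi_{\mu,\Sigma}$; since finite mixtures of Gaussians (and limits thereof, e.g.\ narrow Gaussians approximating point masses) are dense enough to test nonnegativity of a continuous-ish integrand, one gets $g(x) \ge 0$ for (Lebesgue-)a.e.\ $x$. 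But we also know $\int \gamma(x) g(x)\, dx = D_t \ge 0$ is just the ordinary $\Gamma$-drift; the genuinely new information is that by scaling $\Sigma = \sigma^2 I$ with $\sigma^2 \to \infty$ and $\mu$ ranging freely, the weights $w_{\mu,\Sigma}$ can be made to concentrate mass arbitrarily, and the uniform sign constraint pins $g$ down to be zero a.e.\ rather than merely integrating to a nonnegative number. I would organize this as: (i) $g \ge 0$ a.e.\ from the positivity of all Gaussian integrals; (ii) a complementary inequality — perhaps using that $M_t \ge 0$ and a suitable $(\mu,\Sigma)$ limit, or applying the same reasoning to the martingale/stopped version — to get $\int g \, d\Gamma \le 0$; hence $g = 0$ a.e.

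Once the one-step conclusion $M_t = M_{t-1}$ $\seqG$-a.s.\ (for the NM case) or $M_t \le M_{t-1}$ with equality $\seqG$-a.s. (unpacking more carefully for the NSM case) is in hand, I would iterate over $t$: a countable union of $\seqG$-null sets is $\seqG$-null, so $\seqG(\exists t : M_t > M_{t-1}) = 0$ (resp.\ $\seqG(\exists t: M_t \ne M_{t-1}) = 0$), which is exactly triviality. The "a fortiori" statement for $\lciidseq$ is immediate: $\mathcal{G}^\infty \subset \lciidseq$, so any $\lciidseq$-NSM is in particular a $\mathcal{G}^\infty$-NSM and hence trivial. The main obstacle I anticipate is step (ii) — cleanly obtaining the complementary inequality that upgrades "$g \ge 0$ a.e." to "$g = 0$ a.e." For the pure NSM case $\int g\, d\Gamma \ge 0$ alone does not contradict $g \ge 0$; one needs to exploit either that $M_t$ is \emph{also} bounded below by $0$ (so the twisted process $Z_t M_t$ being a supermartingale for weights that can be made to "see" only where $M$ would need to increase), or the admissibility/limiting-parameter trick. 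A secondary technical nuisance is handling the conditioning on $\mathscr{F}_{t-1}$ and the null sets uniformly — i.e.\ ensuring the exceptional set in "$g = 0$ a.e." can be chosen off an $\mathscr{F}_{t-1}$-measurable $\seqG$-null set — which I would address with a standard monotone-class / regular-conditional-probability argument and by choosing a countable dense set of Gaussian parameters to test against, relying on continuity in $(\mu,\Sigma)$ to recover the full family.
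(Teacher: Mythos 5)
Your route is genuinely different from the paper's and, with repairs, it can be made to work; it is essentially the ``more direct argument'' the paper's closing remark in \S\ref{sec:triviality} speculates about. The paper instead first shows that $\cfhull(\mathcal{G}^\infty)$ contains every independent law $\bigotimes\{P_t\}$ with $P_t \in \mathcal{D}$ (via the chain of Lemmata~\ref{lem:cfhull_products}--\ref{lem:gauss_closures} and Propositions~\ref{prop:cfhull_nsm_closure}--\ref{prop:fhull_nm_closure}), and then derives a contradiction by exhibiting a product of uniforms on a small rectangle on which the event $\{M_t \ge M_{t-1}+1/n,\ M_{t-1}\le n\}$ has Lebesgue density close to one. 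Your one-step reduction $\mathbb{E}_{\mathtt{\Gamma}}[w_{\mu,\Sigma}(X_t)(M_{t-1}-M_t)\mid \mathscr{F}_{t-1}] \ge 0$ is correct (since $Z^{\mathtt{P}}_{t-1}>0$ everywhere for Gaussians, it cancels cleanly), and the ``test against all Gaussians'' step ultimately leans on the same fact the paper isolates as Lemma~\ref{lem:gauss_closures}: $L_1$-denseness of finite Gaussian mixtures in $\mathcal{D}$. What your approach buys is brevity and no fork-convexity machinery; what the paper's buys is the reusable characterisation $\cfhull(\mathcal{P}^\infty) \supset \bigotimes\overline{\mathcal{P}_*}$, which is presented as a contribution in its own right.

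Two issues. First, a genuine gap: your claim (i), that $\int \phi_{\mu,\Sigma}\, g \ge 0$ for all Gaussians forces $g \ge 0$ a.e., requires passing from single Gaussians to mixtures (fine, by linearity) and then to $L_1$-limits approximating the normalized indicator of $E=\{g \le -\epsilon\}$; that last limit is only controlled when $g$ is \emph{bounded}, and $g = M_{t-1}-M_t$ is unbounded below since $M_t$ is merely nonnegative. You must first truncate, replacing $M_t$ by $\min(M_t, K)$ on the event $\{M_{t-1}\le K-1/n\}$ (noting the truncation is still an NSM for every law in the class) --- exactly the device $N_t = \min(M_t, n+1)$ in the paper's proof. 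Without this, the error term $\int (m_k - h)\,g$ in the approximation is not controlled. Second, your self-identified ``main obstacle'' (step (ii)) is not an obstacle: triviality of an NSM is defined as $\seqG(\exists t: M_t > M_{t-1})=0$, i.e.\ almost-sure non-increase, so $g \ge 0$ a.e.\ is already the full conclusion for the NSM case --- you do not need $g=0$. For the NM case the one-step condition is an equality, so the same density argument applied to both $g$ and $-g$ gives $g=0$ a.e.\ directly (or, as in the paper, note that $M_1 - M_t$ is then itself a nonnegative supermartingale and hence also non-increasing). Once these points are fixed, summing the per-$t$ null sets completes the argument, and the a fortiori claim for $\lciidseq \supset \mathcal{G}^\infty$ is immediate as you say.
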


As discussed by Ramdas et al. \cite{ramdas2022testingExchangability}, the above result implies that any valid level-$\alpha$ sequential test for log-concavity based on thresholding $\lciidseq$-NSMs or $\lciidseq$-NMs must be powerless. Indeed, in the former case, such a test against any law that is locally absolutely continuous with respect to $\seqG$ will almost surely never exceed its starting value,  and thus will almost surely never reject.  

\noindent \emph{Intuition behind the proof.} The result arises from a contradiction. To illustrate this, suppose $\{M_t\}$ is a $\mathcal{G}^\infty$-NSM, and that for some time $t$, given $\mathscr{F}_{t-1},$ it increases on the event $\{X_t \in O\}$ for some open \emph{ball} $O,$ i.e., conditionally on $X_1^{t-1} = x_1^{t-1},$ $\{X_t \in O\} \subset \{M_t > M_{t-1}\}$. Notice that due to nonnegativity, at worst it could be zero outside the ball. Now, consider a Gaussian $G_O$ of such a small variance that $G_O(O) \approx 1.$ By tuning this variance, we can ensure that $M_t > M_{t-1}$ with probability arbitrarily close to $1$ given the history, and since the drop in the $M_t$ remains bounded outside of the ball, this ensures that the conditional expectation of $M_t$ strictly increases. Since this violates the supermartingale property against $G_B^\infty \in \mathcal{G}^\infty,$ we must conclude that no such ball $O$ exists.

Of course, the set on which $\{M_t\}$ increases need not contain any ball, but still be of nontrivial mass, not to mention that this set may vary with the history in a complex way. We address such gaps by exploiting the notion of fork-convexity \cite{ramdas2022testingExchangability} which serves as a sequential analogue of convexity especially germane to (super)martingale properties, and is treated in the following section. In particular, it holds that any process $\{M_t\}$ that is a $\mathcal{G}^\infty$-NSM (or NM) is also a NSM (or NM) with respect to any sequential law in the `fork-convex hull' of $\mathcal{G}^\infty$. The main argument then demonstrates that the fork-convex hull of $\mathcal{G}^\infty$ is incredibly rich, and contains the laws of arbitrary independent processes with density (i.e., processes of jointly independent $\{X_t\}$ such that $X_t \sim p_t \in \mathcal{D}$). This large set of laws entirely obstructs the NSM (or NM) property from holding in any nontrivial manner, essentially using a robust version of the previous intuitive example. Schematically, we take the following route to establish this result, where the forward direction of each implication exploits fork-convex combinations (and the reverse is trivial).

\begin{figure}[hb]
    \centering
    \includegraphics[width = \linewidth]{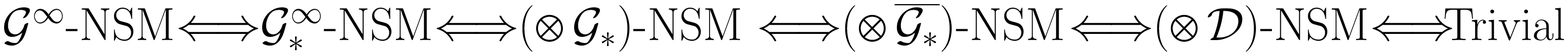}
    \caption{Schematic view of the argument. $\mathcal{G}_*$ is the set of all finite mixtures of Gaussians and $\overline{\mathcal{G}_*}$ denotes its $L_1$ closure. For any set $\mathcal{P}$, the class $\bigotimes \mathcal{P}$ consists of independent sequential laws with marginals in $\mathcal{P}$. See \S\ref{sec:cfhull_of_independent_laws} for definitions.}
    \label{fig:my_label}
\end{figure}

\subsection{Fork-convex Combinations}

In an algorithmic sense, for two laws $P,Q$, an $\alpha$-convex combination $R = \alpha P + (1-\alpha)Q$ is the law of the output of the following procedure: independently sample $U \sim P$ and $V\sim Q$, and output $X = U$ or $V$ according to the outcome of an independent $\alpha$-coin. Fork-convex combinations are the natural sequential extension of such a procedure. Concretely, we sample two trajectories $\{U_t\}\sim \mathtt{P}$ and $\{V_t\} \sim \mathtt{Q}$, release $X_t = U_t$ for $t \le s$ for some time $s$, and then flip a $h$-coin (where $h$ can depend on the history) to decide whether the subsequent tail is $X_t = U_t$ or $X_t = V_t$ for $t > s$. Notice that this is a much richer notion than convex combinations: firstly, the decision to release $U_t$ or $V_t$ only needs to be made for a tail of the output sequence, and secondly, the mixture proportion can depend on the history. Formally, this is defined as follows.
\begin{defi}[\cite{ramdas2022testingExchangability}]
    Let $\mathtt{P}, \mathtt{Q} \lac \mathtt{\Gamma}$ be sequential laws. Let $s \in \mathbb{N},$ and let $h \in [0,1]$ be an $\mathscr{F}_{s}$-measurable random variable such that $\mathtt{\Gamma}(h < 1, Z_s^{\mathtt{Q}} = 0) = 0.$ Then the $(s,h)$-fork-convex combination of $\mathtt{P}$ with $\mathtt{Q}$ is the sequential law $\mathtt{R}$ with density process \[ \denp{R}_t := \denp{P}_t \indi\{t \le s \} + \left( h \denp{P}_t + (1-h) \denp{Q}_t \frac{\denp{P}_s}{\denp{Q}_s}\right) \indi\{t > s\}.\] We shall denote this succinctly as \( \mathtt{R} = \fccomb{\mathtt{P}}{\mathtt{Q}}{s}{h}.\)
\end{defi}
Notice that fork-convex combinations probabilistically allow single data-dependent change-points, or `switches', from a law $\mathtt{P}$ to $\mathtt{Q}$. The ratio $\denp{P}_s/\denp{Q}_s$ accounts for the fact that the prefix up to time $s$ was drawn according to $\mathtt{P}$ in the case of a switch, and the condition on $h$ ensures that $\denp{Q}_s \neq 0$ when we switch to $\mathtt{Q}$ (informally meaning that the initial segment of data was not impossible under $\mathtt Q$). 

The importance of the above definition lies in the fact that fork-convex combinations preserve (super)martingale properties. Recall from \S\ref{sec:def} that $\{M_t\}$ is a $\mathtt{P}$-NSM if and only if $\{\denp{P}_tM_t\}$ is a $\mathtt{\Gamma}$-NSM. Now suppose $\{M_t\}$ is both a $\mathtt{P}$-NSM and $\mathtt{Q}$-NSM, let $\mathtt{R}$ be a $(s,h)$-fork-convex combination of $\mathtt{P}$ and $\mathtt{Q}$. For $t \ge s+1,$ and we have \begin{align*} \mathbb{E}_{\mathtt{\Gamma}}[ \denp{R}_t M_t |\mathscr{F}_{t-1}] &= h \mathbb{E}_{\mathtt{\Gamma}}[ \denp{P}_t M_t | \mathscr{F}_{t-1}] + (1-h) \frac{\denp{P}_s}{\denp{Q}_s} \mathbb{E}_{\mathtt{\Gamma}}[ \denp{Q}_t M_t] \le h\denp{P}_{t-1} M_{t-1} + (1-h) \frac{\denp{P}_s}{\denp{Q}_s} \denp{Q}_{t-1} M_{t-1} = \denp{R}_{t-1} M_{t-1},\end{align*} where we have utilized the fact that $h$ and $\denp{\cdot}_s$ are $\mathscr{F}_{t-1}$-measurable. The same calculation is trivial for $t \le s,$ and follows similarly for the martingale property. The same property extends considerably beyond finite combinations to closed fork-convex hulls, which generalise the standard notion of closed convex hull of sets.

\begin{defi}[\cite{ramdas2022testingExchangability}]
     A set is said to be fork-convex if it contains all fork-convex combinations of its elements. Let $\mathfrak{P}$ be a set of sequential laws that are locally absolutely continuous with respect to $\mathtt{\Gamma}$. The \emph{fork-convex hull} of $\mathfrak{P},$ denoted $\fhull(\mathfrak{P})$ is the intersection of all fork-convex sets containing $\mathfrak{P}$. The \emph{closed fork-convex hull} of $\mathfrak{P},$ denoted  $\cfhull(\mathfrak{P})$ is the closure of its fork-convex hull with respect to $L_1(\mathtt{\Gamma})$ convergence of the likelihood ratio processes at every fixed time $t$. 
\end{defi}
Explicitly, the closure in the definition includes all processes $\mathtt{Q}$ such that there exists a sequence $\mathtt{Q}_n$ with density process $\{Z^{\mathtt{Q}_n}_t\}$ such that $\forall t, Z^{\mathtt{Q}_n}_t \to \denp{Q}_t$ in $L_1(\mathtt{\Gamma})$. We shall refer to this as the local $L_1(\mathtt{\Gamma})$ closure. This closure induces considerable flexibility into closed fork-convex hulls, making the notion a powerful concept in light of the following phenomenon, observed by Ramdas et al. \cite[Thm.~13]{ramdas2022testingExchangability} whose argument we reproduce below.  
\begin{myprop}\label{prop:cfhull_nsm_closure}
   For a set of sequential laws $\mathfrak{P},$ a process is a $\mathfrak{P}$-NSM if and only if it is a  $\cfhull(\mathfrak{P})$-NSM. 
\end{myprop}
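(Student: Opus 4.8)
The plan is to establish both implications. The reverse direction ($\cfhull(\mathfrak{P})$-NSM $\implies$ $\mathfrak{P}$-NSM) is immediate since $\mathfrak{P} \subseteq \cfhull(\mathfrak{P})$, so all the work is in the forward direction: assuming $\{M_t\}$ is a $\mathfrak{P}$-NSM, I want to show it is a $\mathtt{R}$-NSM for every $\mathtt{R} \in \cfhull(\mathfrak{P})$. The strategy is to show that the collection $\mathfrak{S} := \{\mathtt{R} \lac \mathtt{\Gamma} : \{M_t\} \text{ is a } \mathtt{R}\text{-NSM}\}$ is a fork-convex set that is closed under local $L_1(\mathtt{\Gamma})$ limits; since $\mathfrak{S} \supseteq \mathfrak{P}$, minimality of $\cfhull(\mathfrak{P})$ then forces $\cfhull(\mathfrak{P}) \subseteq \mathfrak{S}$, which is exactly the claim. (Note $\{M_t\}$ is fixed throughout, so $\mathfrak{S}$ depends on it; this is fine since we argue for an arbitrary fixed $\mathfrak{P}$-NSM.)

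First I would verify $\mathfrak{S}$ is fork-convex. This is essentially the computation already displayed in the text just before the statement: if $\{M_t\}$ is both a $\mathtt{P}$-NSM and a $\mathtt{Q}$-NSM, and $\mathtt{R} = \fccomb{\mathtt{P}}{\mathtt{Q}}{s}{h}$, then using the characterization that $\{M_t\}$ is a $\mathtt{R}$-NSM iff $\{Z_t^{\mathtt{R}} M_t\}$ is a $\mathtt{\Gamma}$-NSM, one expands $Z_t^{\mathtt{R}} = h Z_t^{\mathtt{P}} + (1-h)(Z_s^{\mathtt{P}}/Z_s^{\mathtt{Q}}) Z_t^{\mathtt{Q}}$ for $t > s$, takes the $\mathtt{\Gamma}$-conditional expectation given $\mathscr{F}_{t-1}$, and pulls out the $\mathscr{F}_{t-1}$-measurable factors $h, Z_s^{\mathtt{P}}, Z_s^{\mathtt{Q}}$ to reduce to the two given supermartingale inequalities; the case $t \le s$ is trivial since $Z_t^{\mathtt{R}} = Z_t^{\mathtt{P}}$. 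Care is needed where $Z_s^{\mathtt{Q}} = 0$, but the defining condition $\mathtt{\Gamma}(h<1, Z_s^{\mathtt{Q}}=0)=0$ is precisely what makes the ratio $Z_s^{\mathtt{P}}/Z_s^{\mathtt{Q}}$ harmless ($\mathtt{\Gamma}$-a.s.\ the offending term is multiplied by $1-h=0$). I would state this as: $\mathtt{R} \in \mathfrak{S}$.

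Second, and this is where I expect the real obstacle, I would show $\mathfrak{S}$ is closed under local $L_1(\mathtt{\Gamma})$ limits: if $\mathtt{R}_n \in \mathfrak{S}$ with $Z_t^{\mathtt{R}_n} \to Z_t^{\mathtt{R}}$ in $L_1(\mathtt{\Gamma})$ for every $t$, then $\mathtt{R} \in \mathfrak{S}$. The natural route is again through $\mathtt{\Gamma}$: we know $\mathbb{E}_{\mathtt{\Gamma}}[Z_t^{\mathtt{R}_n} M_t \mid \mathscr{F}_{t-1}] \le Z_{t-1}^{\mathtt{R}_n} M_{t-1}$ and want to pass to the limit on both sides. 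The right side converges in $L_1(\mathtt{\Gamma})$ if $M_{t-1}$ is bounded — but $M_{t-1}$ need not be bounded, so the clean argument needs a truncation or a different tack. One workable fix: it suffices to verify the integrated inequality $\mathbb{E}_{\mathtt{\Gamma}}[Z_t^{\mathtt{R}} M_t \mathbbm{1}_A] \le \mathbb{E}_{\mathtt{\Gamma}}[Z_{t-1}^{\mathtt{R}} M_{t-1}\mathbbm{1}_A]$ for all $A \in \mathscr{F}_{t-1}$, and then localize: test against $A \cap \{M_{t-1} \le K\}$, noting that on such sets $Z_{t-1}^{\mathtt{R}_n} M_{t-1}\mathbbm{1}_A \to Z_{t-1}^{\mathtt{R}} M_{t-1}\mathbbm{1}_A$ in $L_1(\mathtt{\Gamma})$, while for the left side $Z_t^{\mathtt{R}_n} M_t \mathbbm{1}_{A \cap \{M_{t-1}\le K\}} \to Z_t^{\mathtt{R}} M_t \mathbbm{1}_{A \cap \{M_{t-1} \le K\}}$ in $L_1$ since $M_t \mathbbm{1}_{A \cap \{M_{t-1}\le K\}}$ is a fixed (finite a.s.) nonnegative function against which $L_1(\mathtt{\Gamma})$ convergence of $Z_t^{\mathtt{R}_n}$ transfers by a Scheffé/uniform-integrability argument (or, more carefully, by passing to an a.s.-convergent subsequence and invoking generalized dominated convergence). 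Then let $K \to \infty$ by monotone convergence. I would present this step carefully since it is the one genuine analytic subtlety; alternatively, one can cite that Ramdas et al.\ \cite[Thm.~13]{ramdas2022testingExchangability} prove exactly this closure property and simply reproduce their argument. Finally, since $\mathfrak{P} \subseteq \mathfrak{S}$ and $\mathfrak{S}$ is fork-convex and locally $L_1(\mathtt{\Gamma})$-closed, $\cfhull(\mathfrak{P}) \subseteq \mathfrak{S}$, completing the forward direction; the NM case follows by replacing every inequality above with an equality.
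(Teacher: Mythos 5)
Your overall architecture matches the paper's: the two-point fork-convex computation (already displayed in the text before the proposition) handles the hull, and a limiting argument handles the closure; your framing via the set $\mathfrak{S}$ and minimality of the hull is just a formalization of the same idea. The one step you correctly flag as the genuine analytic subtlety is, however, the step you get wrong. After localizing to $B = A \cap \{M_{t-1}\le K\}$, you claim $Z_t^{\mathtt{R}_n} M_t \indi_B \to Z_t^{\mathtt{R}} M_t \indi_B$ in $L_1(\mathtt{\Gamma})$. This does not follow: the truncation bounds $M_{t-1}$, not $M_t$, so $M_t\indi_B$ is an unbounded (merely a.s.-finite) function, and $L_1$ convergence of $Z_t^{\mathtt{R}_n}$ does not transfer through multiplication by an unbounded function. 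Scheff\'e would require already knowing $\int Z_t^{\mathtt{R}_n} M_t\indi_B \to \int Z_t^{\mathtt{R}} M_t \indi_B$, which is essentially the conclusion you are after, and generalized dominated convergence requires a dominating sequence converging in $L_1$, which is not available here.

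The repair is to notice that you never need convergence of the left-hand side, only a one-sided bound, and Fatou's lemma gives exactly that. Pass to a subsequence along which $Z_t^{\mathtt{R}_n}$ and $Z_{t-1}^{\mathtt{R}_n}$ both converge $\mathtt{\Gamma}$-a.s.; then $\mathbb{E}_{\mathtt{\Gamma}}[Z_t^{\mathtt{R}}M_t\mid\mathscr{F}_{t-1}] \le \liminf_n \mathbb{E}_{\mathtt{\Gamma}}[Z_t^{\mathtt{R}_n}M_t\mid\mathscr{F}_{t-1}] \le \liminf_n Z_{t-1}^{\mathtt{R}_n}M_{t-1} = Z_{t-1}^{\mathtt{R}}M_{t-1}$ a.s., using that $M_{t-1}<\infty$ a.s.; no truncation in $K$ is needed at all. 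This conditional-Fatou argument is precisely the paper's proof (reproduced from Ramdas et al.), which you offer as a fallback. A final small point: your closing remark that the NM case "follows by replacing every inequality above with an equality" is false for the closure step---Fatou preserves only the inequality, which is exactly why the paper states the NM analogue (Proposition~\ref{prop:fhull_nm_closure}) for the fork-convex hull \emph{without} the closure. That remark is harmless here since the proposition concerns only NSMs, but it should be deleted.
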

   \begin{proof}
       The result is evident for the fork-convex hull as an extension of the previous two-point calculation. This extends to closures as follows. Let $\{M_t\}$ be the process in question, and suppose $\mathtt{P}_n \to \mathtt{P}$ in the sense above for $\mathtt{P}_n \in \fhull(\mathfrak{P})$. Let $Z_t^n := Z_t^{\mathtt{P}_n}$ and $Z_t := Z_t^{\mathtt{P}}.$ We know that for each $t$, $Z_t^n \to Z_t$ in $L_1(\mathtt\Gamma).$ We need to show that $Z_t M_t$ is a $\mathtt{\Gamma}$-NSM. To this end, fix a $t$, and, by passing to a subspace, assume that $Z_t^n \to Z_t$ and $Z_{t-1}^n \to Z_{t-1}$ pointwise a.s. Now, since $Z_t^n M_t$ is a $\mathtt{\Gamma}$-martingale, using Fatou's lemma yields\[ \mathbb{E}_{\mathtt{\Gamma}}[Z_t M_t|\mathscr{F}_{t-1}] = \mathbb{E}_{\mathtt{\Gamma}}[\liminf Z_t^n M_t|\mathscr{F}_{t-1}] \le \liminf \mathbb{E}_{\mathtt{\Gamma}}[Z_t^n M_t|\mathscr{F}_{t-1}] \le \liminf Z_{t-1}^n M_{t-1} = Z_{t-1}M_{t-1}. \qedhere\] 
   \end{proof}

It is worth noting that while the NSM property is preserved under closures above, the same is not necessarily true of the martingale property due to the use of Fatou's Lemma when handling closures in the above proof. Nevertheless, the NM (and indeed the martingale property without appeal to non-negativity) persists under fork-convex hulls, without the closure, giving us the following characterisation.

\begin{myprop}\label{prop:fhull_nm_closure}
    For a set of sequential laws $\mathfrak{P},$ a process is a $\mathfrak{P}$-NM if and only if it is a  $\fhull(\mathfrak{P})$-NM.
\end{myprop}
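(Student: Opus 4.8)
The \emph{if} direction is immediate since $\mathfrak{P}\subseteq\fhull(\mathfrak{P})$, so all the content is in the \emph{only if} direction: if $\{M_t\}$ is a $\mathfrak{P}$-NM then it is an $\mathtt{R}$-NM for every $\mathtt{R}\in\fhull(\mathfrak{P})$. The plan is to run the same strategy as in Proposition~\ref{prop:cfhull_nsm_closure} but \emph{without} the closure step: exhibit the family of laws under which $\{M_t\}$ is an NM as a fork-convex set, then appeal to the minimality built into the definition of $\fhull$.

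Concretely, I would fix a $\mathfrak{P}$-NM $\{M_t\}$ and set $\mathfrak{F}:=\{\mathtt{R}\lac\mathtt{\Gamma}:\{M_t\}\text{ is an }\mathtt{R}\text{-NM}\}$. By hypothesis $\mathfrak{P}\subseteq\mathfrak{F}$, and since $\fhull(\mathfrak{P})$ is by definition the intersection of all fork-convex sets containing $\mathfrak{P}$, it suffices to show that $\mathfrak{F}$ is fork-convex. I would work through the equivalence recorded in \S\ref{sec:def}, that $\{M_t\}$ is an $\mathtt{R}$-NM iff $\{\denp{R}_t M_t\}$ is a $\mathtt{\Gamma}$-NM; the nonnegativity half transfers to every law in the hull for free (it is a pathwise requirement, and in any case all laws considered are $\lac\mathtt{\Gamma}$), so only the martingale identity needs to be verified.

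For the fork-convexity check I would take $\mathtt{P},\mathtt{Q}\in\mathfrak{F}$ and $\mathtt{R}=\fccomb{\mathtt{P}}{\mathtt{Q}}{s}{h}$, and rerun verbatim the two-point conditional-expectation computation already carried out above for NSMs, the only difference being that the two supermartingale inequalities there are now equalities, so the chain closes with $\mathbb{E}_{\mathtt{\Gamma}}[\denp{R}_t M_t\mid\mathscr{F}_{t-1}]=\denp{R}_{t-1}M_{t-1}$. For $t\le s$ this is trivial because $\denp{R}_t=\denp{P}_t$; for $t\ge s+1$ it uses only that $h,\denp{P}_s,\denp{Q}_s$ are $\mathscr{F}_{t-1}$-measurable and that $\{\denp{P}_t M_t\}$ and $\{\denp{Q}_t M_t\}$ are $\mathtt{\Gamma}$-martingales. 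The lone bookkeeping point, handled exactly as in the NSM case, is the ratio $\denp{P}_s/\denp{Q}_s$ on the event $\{\denp{Q}_s=0\}$: the defining constraint $\mathtt{\Gamma}(h<1,\denp{Q}_s=0)=0$, together with $\denp{Q}_s=0\Rightarrow\denp{Q}_t=0$, makes the corresponding term vanish there. Concluding $\mathtt{R}\in\mathfrak{F}$ then finishes the proof.

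I do not anticipate any real obstacle: the argument is the NSM computation with ``$\le$'' upgraded to ``$=$''. The one point worth flagging is why the statement is about the \emph{un-closed} hull and cannot be promoted to $\cfhull(\mathfrak{P})$: the closure step in Proposition~\ref{prop:cfhull_nsm_closure} passes to the limit via Fatou's lemma, which preserves only the supermartingale inequality, so a local-$L_1(\mathtt{\Gamma})$ limit of martingales need not be a martingale --- hence $\fhull(\mathfrak{P})$ is the natural domain for the NM version.
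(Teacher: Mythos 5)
Your proposal is correct and matches the paper's (largely implicit) argument: the paper justifies this proposition by the same two-point conditional-expectation computation with the inequalities upgraded to equalities, extended to the full hull exactly via the minimality/fork-convexity observation you spell out, and it makes the same remark about Fatou's lemma being the obstruction to passing to the closure. No gaps.
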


\subsection{The Fork-Convex Hull of Independent Sequential Laws}\label{sec:cfhull_of_independent_laws}

Proposition~\ref{prop:cfhull_nsm_closure} gives us a concrete attack to showing the triviality of $\mathcal{G}^\infty$-NSMs: we shall show that the fork-convex hull of this set is far too rich to allow the existence of nontrivial NSMs. The bulk of our argument develops simple structural characterisations of fork-convex hulls of independent sequential laws. This section describes this characterisation using three properties, whose proof we leave to \S\ref{appx:triviality_proof_lemmas}. We begin with a key definition that sets notation for `independent sequential laws' from a set.
\begin{defi}\label{def:indep_seq_from_set}
        Let $\mathcal{P}$ be a set of distribution on $\mathbb{R}^d$. For a sequence of distributions $\{P_t\}_{t \in \mathbb{N}},$ we define $\bigotimes \{P_t\}$ as the sequential distribution of a stochastic process $\{X_t\}_{t \in \mathbb{N}}$ such that all $X_t$ are jointly independent, and for each $t \in \mathbb{N},$ $X_t \sim P_t.$ We further define $\bigotimes \mathcal{P} := \{ \bigotimes \{P_t\} : P_t \in \mathcal{P} ~  \forall t\},$ i.e.\ the set of laws of independent stochastic processes with laws at each time lying in $\mathcal{P}.$
    \end{defi}
Note that $\bigotimes \mathcal{P}$ is a much richer set than the i.i.d.\ sequential laws, which we denote $\mathcal{P}^\infty:=\{P^\infty: P \in \mathcal P\}.$ In light of this, the following result demonstrates the richness of fork-convex hulls. Recall that a set of laws is mutually absolutely continuous (m.a.c.) if every pair of laws contained in it is mutually absolutely continuous.
\begin{mylem}\label{lem:cfhull_products}
    Let $\mathcal{P}\subset \mathcal{D}$ be a m.a.c.~set of laws with density on $\mathbb{R}^d$. Then, $\cfhull(\mathcal{P}^\infty) \supset \bigotimes \mathcal{P} \supset \mathcal{P}^\infty$.
\end{mylem}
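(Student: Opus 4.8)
The plan is to build up the sequential law $\bigotimes\{P_t\}$ for an arbitrary sequence $(P_t)\subset\mathcal P$ from a sequence of finite fork-convex combinations of i.i.d.\ laws in $\mathcal P^\infty$, and then take a local $L_1(\mathtt\Gamma)$ limit. The inclusion $\bigotimes\mathcal P\supset\mathcal P^\infty$ is immediate (take $P_t\equiv P$), so the content is the first inclusion. Fix $\{P_t\}$ with each $P_t\in\mathcal P$, and write the target density process as $\denp{R}_t=\prod_{s\le t}p_s(X_s)/\gamma(X_s)$. First I would build a finite-horizon approximant: for a fixed horizon $n$, define $\mathtt R^{(n)}$ to be the law that follows $P_1^\infty$ up to time $1$, then switches to $P_2^\infty$ at time $1$, then to $P_3^\infty$ at time $2$, and so on up to switching to $P_n^\infty$ at time $n-1$, and thereafter stays at $P_n^\infty$. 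Each such switch is a legitimate $(s,h)$-fork-convex combination with $h=0$ and $s$ the switch time: the m.a.c.\ hypothesis guarantees $Z^{\mathtt Q}_s>0$ $\mathtt\Gamma$-a.s.\ for every $\mathtt Q=P^\infty$ with $P\in\mathcal P$, so the technical condition $\mathtt\Gamma(h<1,Z^{\mathtt Q}_s=0)=0$ holds, and the ratio $\denp{P}_s/\denp{Q}_s$ appearing in the definition is well-defined. Chaining these $n-1$ combinations inside $\fhull(\mathcal P^\infty)$ shows $\mathtt R^{(n)}\in\fhull(\mathcal P^\infty)$, and one checks directly from the telescoping ratios that its density process agrees with $\denp{R}_t$ for all $t\le n$.

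Next I would verify that $Z^{\mathtt R^{(n)}}_t\to\denp{R}_t$ in $L_1(\mathtt\Gamma)$ for every fixed $t$. This is in fact trivial once the horizon exceeds $t$: by the previous paragraph $Z^{\mathtt R^{(n)}}_t=\denp{R}_t$ exactly for all $n\ge t$, so the sequence is eventually constant and the limit is $\denp{R}_t$. Hence $\mathtt R=\bigotimes\{P_t\}$ lies in the local $L_1(\mathtt\Gamma)$ closure of $\fhull(\mathcal P^\infty)$, which is precisely $\cfhull(\mathcal P^\infty)$ by definition. Since $(P_t)$ was an arbitrary $\mathcal P$-valued sequence, this gives $\bigotimes\mathcal P\subset\cfhull(\mathcal P^\infty)$, completing the proof.

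The step I expect to be the main obstacle — or at least the one needing the most care — is checking that the iterated fork-convex combination $\mathtt R^{(n)}$ really has the density process I claim, and that the technical side-conditions in the definition of an $(s,h)$-combination are met at every stage. One has to be careful that after the first switch the "current law" is itself a non-i.i.d.\ law, so one is forming fork-convex combinations of laws in $\fhull(\mathcal P^\infty)$ rather than of i.i.d.\ laws; this is fine because $\fhull(\mathcal P^\infty)$ is fork-convex by definition, but the bookkeeping of the accumulated density ratios $\denp{\cdot}_s/\denp{\cdot}_s$ through the chain needs the m.a.c.\ assumption to keep all denominators positive $\mathtt\Gamma$-a.s.\ and must be written out carefully to see the telescoping collapse to $\prod_{s\le t}p_s(X_s)/\gamma(X_s)$. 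Once that computation is done cleanly, the rest is immediate from Proposition~\ref{prop:cfhull_nsm_closure}'s surrounding machinery and the definition of the closure.
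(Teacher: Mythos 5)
Your proposal is correct and follows essentially the same route as the paper: the iterated switches $\mathtt{R}^1 := P_1^\infty$, $\mathtt{R}^T := \fccomb{\mathtt{R}^{T-1}}{P_T^\infty}{T-1}{0}$ (valid by the m.a.c.\ assumption) match the target density process up to time $T$, and the eventual-constancy of the prefix density processes is exactly what the paper's Lemma~\ref{lem:cfhull_okay_to_fit_prefixes} packages to conclude membership in the closed fork-convex hull.
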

To sketch the argument underlying the above, fix any $\mathtt{P} = \bigotimes \{P_t\}.$ It suffices to demonstrate a sequence of laws $\{\mathtt{R}^T\}_{T \in \mathbb{N}},$ each generated by finite fork-convex combinations of $\mathcal{P}^\infty$-laws (and their fork-convex combinations) such that for $t \le T,$ the density process of $\mathtt{R}^T$ and $\mathtt{P}$ agree. The conclusion then follows under closure, since $\mathtt{R}^T \to \mathtt{P}$ in the appropriate sense. The concrete witness for the above Lemma is the following sequence \[ \mathtt{R}^1 := P_1^\infty, \mathtt{R}^T = \fccomb{\mathtt{R}^{T-1}}{P_T^\infty}{T-1}{0},\] where each fork-convex combination is valid since $\mathcal{P}$ is m.a.c. In essence, this exploits the fact that fork-convex combinations let one switch between laws after a time of our choosing. See \ref{appx:triviality_proof_lemmas} for details.

Next, we exploit the convex combination properties of fork-convex combinations to demonstrate that fork-convex combinations of i.i.d.~laws includes i.i.d.~products over mixtures as well. To this end, let us define the mixture classes as below.

\begin{defi}
    Let $\mathcal{P}$ be a set of distributions on $\mathbb{R}^d$. For $k \in \mathbb{N},$ we let $\mathcal{P}_k$ be the class of laws formed by $k$-fold mixtures of laws in $\mathcal{P},$ and denote $\mathcal{P}_* = \bigcup_{k \in \mathbb{N}} \mathcal{P}_k$ as the class of laws formed by finite mixtures of laws in $\mathcal{P}$.
\end{defi}
Note that $\mathcal{P}_*$ is well defined since $\mathcal{P}_k$ form an increasing set. The second key result shown in \S\ref{appx:triviality_proof_lemmas} is
\begin{mylem}\label{lem:cfhull_mixtures}
    Let $\mathcal{P} \subset \mathcal{D}$ be a m.a.c.~set of laws on $\mathbb{R}^d$. Then $\cfhull(\mathcal{P}^\infty) \supset \mathcal{P}_*^\infty$. 
\end{mylem}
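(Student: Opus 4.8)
The plan is to produce, for each target $Q^\infty$ with $Q = \sum_{i=1}^k \lambda_i P_i \in \mathcal{P}_*$ (all $P_i \in \mathcal{P}$, $\lambda_i > 0$, $\sum_i \lambda_i = 1$), a sequence $\{\mathtt{R}^T\}_{T \ge 0}$ of laws in $\fhull(\mathcal{P}^\infty)$ whose density processes agree with $Z^{Q^\infty}$ at all times $t \le T$. Since such a sequence is eventually constant at each fixed $t$, it converges to $Q^\infty$ in the local $L_1(\mathtt\Gamma)$ sense, and as $\cfhull(\mathcal{P}^\infty)$ is the local $L_1(\mathtt\Gamma)$-closure of $\fhull(\mathcal{P}^\infty)$, this gives $Q^\infty \in \cfhull(\mathcal{P}^\infty)$. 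The natural choice is $\mathtt{R}^T := \bigotimes\{Q,\dots,Q,P_1,P_1,\dots\}$, with the first $T$ marginals equal to $Q$ and the rest equal to $P_1$, so that $Z^{\mathtt{R}^T}_t = \prod_{u\le t\wedge T}\tfrac{q(X_u)}{\gamma(X_u)}\prod_{T < u\le t}\tfrac{p_1(X_u)}{\gamma(X_u)}$. The content of the proof is to realize each $\mathtt{R}^T$ by finitely many fork-convex combinations of i.i.d.\ laws. It is worth noting why the obvious idea fails: $Q^\infty \ne \sum_i\lambda_i P_i^\infty$, since the right-hand side picks a single mixture component once and for all, whereas $Q^\infty$ samples a fresh component at every coordinate; fork-convex combinations let us ``re-mix'' at a data-dependent changepoint, and the construction below uses this to append one fresh $Q$-distributed coordinate at a time.

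Before the induction, one preliminary: since $\mathcal{P}$ is m.a.c., all $p_i$ (and hence $q$) share a common support $S = \{p_1 > 0\}$ up to Lebesgue-null sets. On the event $\{X_t \in S\ \forall t\}$ all density processes appearing below are strictly positive, and off it they all vanish; this lets me satisfy the admissibility requirement $\mathtt\Gamma(h<1, Z^{\mathtt{Q}}_s = 0) = 0$ in every fork-convex combination by replacing each constant weight $h$ with the $\mathscr{F}_s$-measurable random variable $\max\{h,\indi\{Z^{\mathtt{Q}}_s = 0\}\}$, which does not alter any density process I compute. Now I induct on $T$. The base case is $\mathtt{R}^0 = P_1^\infty \in \mathcal{P}^\infty$. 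For the inductive step, I first build auxiliary laws: for $j = 2,\dots,k$ let $\mathtt{C}_j := \fccomb{\fccomb{P_1^\infty}{P_j^\infty}{T}{0}}{P_1^\infty}{T+1}{0} \in \fhull(\mathcal{P}^\infty)$, which has all marginals $P_1$ except the $(T{+}1)$-st, which is $P_j$; set $\mathtt{C}_1 := P_1^\infty$. A direct check (the prefix factor $Z^{\mathtt{C}_j}_T = \prod_{u\le T}p_1(X_u)/\gamma(X_u)$ cancels cleanly) shows $Z^{\mathtt{C}_j}_t\,\tfrac{Z^{\mathtt{R}^T}_T}{Z^{\mathtt{C}_j}_T} = \prod_{u\le T}\tfrac{q(X_u)}{\gamma(X_u)}\cdot\tfrac{p_j(X_{T+1})}{\gamma(X_{T+1})}\prod_{T+1 < u\le t}\tfrac{p_1(X_u)}{\gamma(X_u)}$ for $t > T$, and this also holds for $j=1$, in which case it equals $Z^{\mathtt{R}^T}_t$. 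Then set $\mathtt{S}_1 := \mathtt{R}^T$ and $\mathtt{S}_j := \fccomb{\mathtt{S}_{j-1}}{\mathtt{C}_j}{T}{h_j}$ for $j = 2,\dots,k$, with the constant weight $h_j := \big(\sum_{m<j}\lambda_m\big)\big/\big(\sum_{m\le j}\lambda_m\big) \in [0,1]$.

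Since every switch is at time $T$, we have $Z^{\mathtt{S}_{j}}_T = Z^{\mathtt{R}^T}_T$ for all $j$, so for $t > T$ the recursion reads $Z^{\mathtt{S}_j}_t = h_j Z^{\mathtt{S}_{j-1}}_t + (1-h_j)Z^{\mathtt{C}_j}_t\,\tfrac{Z^{\mathtt{R}^T}_T}{Z^{\mathtt{C}_j}_T}$, which unrolls to $Z^{\mathtt{S}_k}_t = \sum_{i=1}^k \lambda_i\, Z^{\mathtt{C}_i}_t\,\tfrac{Z^{\mathtt{R}^T}_T}{Z^{\mathtt{C}_i}_T}$, the telescoping weights reducing to exactly $\lambda_i$ by the choice of the $h_j$. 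Plugging in the formula for $Z^{\mathtt{C}_i}_t\,\tfrac{Z^{\mathtt{R}^T}_T}{Z^{\mathtt{C}_i}_T}$ and using $q = \sum_i\lambda_i p_i$ collapses this to $\prod_{u\le T}\tfrac{q(X_u)}{\gamma(X_u)}\cdot\tfrac{q(X_{T+1})}{\gamma(X_{T+1})}\prod_{T+1<u\le t}\tfrac{p_1(X_u)}{\gamma(X_u)} = Z^{\mathtt{R}^{T+1}}_t$ for $t > T$, while for $t \le T$ both sides equal $\prod_{u\le t} q(X_u)/\gamma(X_u)$. Hence $\mathtt{S}_k = \mathtt{R}^{T+1}$, and it lies in $\fhull(\mathcal{P}^\infty)$ as a finite fork-convex combination of $\mathtt{R}^T$ (in $\fhull(\mathcal{P}^\infty)$ by induction) and of the $\mathtt{C}_j$. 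This closes the induction, and the approximation argument of the first paragraph yields $Q^\infty \in \cfhull(\mathcal{P}^\infty)$; since $Q \in \mathcal{P}_*$ was arbitrary, $\cfhull(\mathcal{P}^\infty) \supseteq \mathcal{P}_*^\infty$.

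I expect the main obstacle to be bookkeeping rather than conceptual: choosing the constants $h_j$ so the fork-convex recursion telescopes exactly to the product density $Z^{\mathtt{R}^{T+1}}$, verifying the cancellation of the prefix factor $Z^{\mathtt{C}_j}_T$, and handling the admissibility condition on $h$ when $\mathcal{P}$ lacks full support. An alternative route is to reduce to $k=2$ via $Q = \lambda_1 P_1 + (1-\lambda_1)Q'$ and induct on $k$, but that would require knowing $\cfhull(\mathcal{P}^\infty)$ is itself fork-convex; the direct $k$-ary construction above stays inside $\fhull(\mathcal{P}^\infty)$ and only invokes the closure at the very last step, which I find cleaner.
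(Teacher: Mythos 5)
Your proposal is correct. The verification goes through: the prefix factor $Z^{\mathtt{C}_j}_T=\prod_{u\le T}p_1(X_u)/\gamma(X_u)$ does cancel as you claim, the telescoping weights $h_j=\big(\sum_{m<j}\lambda_m\big)/\big(\sum_{m\le j}\lambda_m\big)$ unroll to exactly $\sum_i\lambda_i W_i$ with $W_i=Z^{\mathtt{C}_i}_t Z^{\mathtt{R}^T}_T/Z^{\mathtt{C}_i}_T$, and the final collapse via $q=\sum_i\lambda_i p_i$ yields $Z^{\mathtt{R}^{T+1}}_t$. Your handling of admissibility (replacing $h$ by $\max\{h,\indi\{Z^{\mathtt{Q}}_s=0\}\}$ on the common support's complement, where all density processes vanish anyway) is if anything more careful than the paper's, which simply asserts the weights are valid because $\mathcal{P}$ is m.a.c.

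The overall strategy is the same as the paper's --- build finite fork-convex combinations whose density processes match $Q^\infty$ up to time $T$, then invoke the local $L_1(\mathtt{\Gamma})$ closure --- but the construction is organized differently. The paper runs a nested induction: an outer induction on the number of mixture components $k$, decomposing $P=\alpha P^1+(1-\alpha)P^2$ with $P^1\in\mathcal{P}_{k-1}$, and an inner induction over the coordinate index $\tau$ that mixes one coordinate at a time via $\widetilde{\mathtt{S}}^{\tau}=\fccomb{\mathtt{S}^{\tau-1}}{\mathtt{R}_{P^2,Q,T}}{\tau-1}{\alpha}$ followed by a switch back to the $P^1$-tail. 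You instead run a single induction on the prefix length $T$ and handle the $k$-fold mixture in one shot by chaining $k-1$ binary fork-convex combinations, all anchored at the same changepoint $T$, with telescoping weights; the auxiliary laws $\mathtt{C}_j$ (all-$P_1$ except one coordinate) play the role of the paper's $\mathtt{R}_{P^i,Q,T}$. Your version avoids the induction on $k$ and the somewhat heavy bookkeeping of the paper's three-segment density-process formula, at the cost of introducing the $\mathtt{C}_j$ and verifying the telescoping identity; both arguments are about equally long, and yours is arguably the cleaner of the two.
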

The key observation underlying the above is already demonstrated in showing that $\cfhull(\mathcal{P}^\infty) \supset \mathcal{P}_2^\infty.$ To see this, fix any $P, Q\in \mathcal{P},$ and $\alpha \in [0,1]$. We need to demonstrate a sequence of laws $\mathtt{R}^T$ constructed via repeated fork-convex combinations that match the density process of $\mathtt{R}:= (\alpha P + (1-\alpha) Q)^\infty$ for times up to $T$. This is realised as follows: \[\mathtt{R}^0 := P^\infty, \mathtt{S}^T := \fccomb{\mathtt{R}^{T-1}}{Q^\infty}{T-1}{\alpha}, \mathtt{R}^T := \fccomb{\mathtt{S}^T}{P^\infty}{T}{0}.\] In the above, $\mathtt{S}^T$ matches the density process of $\mathtt{R}$ up to time $T$ by mixing between $\mathtt{R}^{T-1}$ (whose tail behaves as $P^\infty$) and $Q^\infty$ appropriately. $\mathtt{R}^T$ then switches the tail of $\mathtt{S}^T$ to behave as $P^\infty$ to enable the recursion. This argument extends to $\mathcal{P}_k^\infty$ for any arbitrary $k$ by inducting over $k$ (which is possible since a member of $\mathcal{P}_k$ is a mixture of a $\mathcal{P}_{k-1}$ law and a $\mathcal{P}$ law). Since $k$ is arbitrary, this immediately extends to $\mathcal{P}_*.$

Finally, we exploit the closure properties of fork-convex hulls under $L_1(\mathtt{\Gamma})$ to extend fork-convex hulls from product measures over a set to product measures over closures of that set.
\begin{mylem}\label{lem:cfhull_closures}
    Let $\mathcal{P}$ be a set of distributions on $\mathbb{R}^d$ that have densities. Then $\cfhull(\bigotimes \mathcal{P}) \supset \bigotimes \overline{\mathcal{P}}$, where $\overline{\mathcal{P}}$ is the $L_1(\Gamma)$-closure of $\mathcal{P}$.
\end{mylem}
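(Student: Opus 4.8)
The inclusion $\bigotimes\mathcal{P}\subseteq\fhull(\bigotimes\mathcal{P})\subseteq\cfhull(\bigotimes\mathcal{P})$ is immediate, so — unlike Lemmas~\ref{lem:cfhull_products} and~\ref{lem:cfhull_mixtures} — there is no clever fork-convex construction to make here: the entire content lies in the local $L_1(\mathtt{\Gamma})$ closure. Given $\mathtt{P}=\bigotimes\{P_t\}$ with every $P_t\in\overline{\mathcal{P}}$, the plan is to exhibit an explicit sequence $\mathtt{P}_n\in\bigotimes\mathcal{P}$ whose density processes converge to that of $\mathtt{P}$ in $L_1(\mathtt{\Gamma})$ at every fixed time $t$, which is exactly what membership in $\cfhull(\bigotimes\mathcal{P})$ demands.

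First I would unwind the $L_1(\Gamma)$-closure. Since every element of $\mathcal{P}$ has a Lebesgue density, for $P,Q$ with densities $p,q$ one has $\|\dd P/\dd\Gamma-\dd Q/\dd\Gamma\|_{L_1(\mathtt{\Gamma})}=\int|p/\gamma-q/\gamma|\,\gamma\,\dd x=\int|p-q|\,\dd x=2\|P-Q\|_{\mathrm{TV}}$, so $\overline{\mathcal{P}}$ is simply the total-variation closure of $\mathcal{P}$ inside $\mathcal{D}$; in particular each $P_t$ admits a density $p_t$, and there are $P_t^{(n)}\in\mathcal{P}$ with densities $p_t^{(n)}$ such that $\|p_t^{(n)}-p_t\|_{L_1(\mathrm{Leb})}\to 0$ as $n\to\infty$. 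I would then set $\mathtt{P}_n:=\bigotimes\{P_t^{(n)}\}_{t\in\mathbb{N}}\in\bigotimes\mathcal{P}$. Note that the same index $n$ is used across all coordinates simultaneously — no diagonal extraction is needed, because local $L_1(\mathtt{\Gamma})$ convergence only ever constrains finitely many coordinates at once.

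It then remains to check $Z_T^{\mathtt{P}_n}\to Z_T^{\mathtt{P}}$ in $L_1(\mathtt{\Gamma})$ for each fixed $T$. An independent law has factorized density process $Z_T^{\bigotimes\{P_s\}}=\prod_{s\le T}p_s(X_s)/\gamma(X_s)$ — the obvious extension of the i.i.d.\ computation recorded in \S\ref{sec:def} — and the coordinates are i.i.d.\ standard Gaussians under $\mathtt{\Gamma}$, so a change of variables gives
\[
\E_{\mathtt{\Gamma}}\bigl|Z_T^{\mathtt{P}_n}-Z_T^{\mathtt{P}}\bigr|=\int_{(\mathbb{R}^d)^T}\Bigl|\,\prod_{s\le T}p_s^{(n)}(x_s)-\prod_{s\le T}p_s(x_s)\Bigr|\,\dd x_1\cdots\dd x_T=2\bigl\|\textstyle\bigotimes_{s\le T}P_s^{(n)}-\bigotimes_{s\le T}P_s\bigr\|_{\mathrm{TV}}.
\]
I would then invoke the standard hybrid/telescoping bound for product measures, namely $\bigl\|\bigotimes_{s\le T}P_s^{(n)}-\bigotimes_{s\le T}P_s\bigr\|_{\mathrm{TV}}\le\sum_{s\le T}\bigl\|P_s^{(n)}-P_s\bigr\|_{\mathrm{TV}}$, obtained by swapping one marginal at a time and using that the TV distance between two products sharing a common factor equals the TV distance of the differing factors. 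Since each summand vanishes as $n\to\infty$ and the sum over $s\le T$ is finite, the left-hand side tends to $0$. Hence $\mathtt{P}_n\to\mathtt{P}$ in the local $L_1(\mathtt{\Gamma})$ sense, so $\mathtt{P}\in\cfhull(\bigotimes\mathcal{P})$; as $\mathtt{P}\in\bigotimes\overline{\mathcal{P}}$ was arbitrary, the lemma follows.

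I do not anticipate a real obstacle. The only two points deserving a line of care are: (i) confirming that the $L_1(\mathtt{\Gamma})$ topology on marginal density ratios coincides with the total-variation topology, so that ``$L_1(\Gamma)$-closure of $\mathcal{P}$'' is literally the TV-closure and each limiting marginal genuinely has a density (which also follows from completeness of $L_1(\mathrm{Leb})$); and (ii) the telescoping inequality, which is classical and which I would state without reproving.
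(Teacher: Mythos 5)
Your proof is correct, and it takes a genuinely more direct route than the paper's. The paper approximates $\mathtt{P}=\bigotimes\{P_t\}$ in two stages: it first introduces intermediate laws $\mathtt{P}^T$ that match $\mathtt{P}$ up to time $T$ and then revert to a fixed sequence of marginals, and it establishes $\mathtt{P}^T\in\cfhull(\bigotimes\mathcal{P})$ by induction on $T$, where each inductive step forms a weight-$0$ fork-convex combination $\fccomb{\mathtt{P}^{T-1}}{\mathtt{Q}^n}{T-1}{0}$ and then passes to a limit in $n$; a final limit in $T$ (via Lemma~\ref{lem:cfhull_okay_to_fit_prefixes}) completes the argument. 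That construction implicitly uses that the closed fork-convex hull is itself fork-convex (since $\mathtt{P}^{T-1}$ is only known to lie in the \emph{closed} hull), and it justifies the weight-$0$ combinations by appealing to mutual absolute continuity of $P_t^n$ and $P_t$ — a hypothesis not present in the lemma statement. You instead observe that no fork-convex combination is needed at all: $\bigotimes\{P_t^{(n)}\}$ already lies in $\bigotimes\mathcal{P}\subseteq\fhull(\bigotimes\mathcal{P})$, and the factorized density process plus the telescoping bound $\|\bigotimes_{s\le T}P_s^{(n)}-\bigotimes_{s\le T}P_s\|_{\mathrm{TV}}\le\sum_{s\le T}\|P_s^{(n)}-P_s\|_{\mathrm{TV}}$ gives local $L_1(\mathtt{\Gamma})$ convergence in one step. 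This is shorter, invokes only a single closure operation (matching the literal definition of $\cfhull$), and dispenses with the m.a.c.\ requirement, so it actually proves the lemma exactly as stated. The paper's version, by contrast, stays within the fork-convex-combination idiom of the preceding two lemmas, which is its only real advantage.
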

The above lemma is a straightforward consequence of the closure properties as detailed in \S\ref{appx:triviality_proof_lemmas}.

\subsection{Proof of the Absence of Nontrivial Test Martingales}\label{sec:no_martingales_proof}

The previous section demonstrates that taking closed fork-convex hulls can yield significant expansion to product laws over sequences. This section exploits these properties to demonstrate the triviality of $\mathcal{G}^\infty$-NSMs. The key observation underlying this is the following standard fact about the richness of Gaussian mixtures. Recall that $\overline{\mathcal{P}}$ denotes the $L_1(\Gamma)$-closure of $\mathcal{P}$. 

\begin{mylem}\label{lem:gauss_closures}
    $\mathcal{G}_*$ is $L_1(\Gamma)$-dense in the set of all distributions with densities, i.e., $\overline{\mathcal{G}_*} = \mathcal{D}$. 
\end{mylem}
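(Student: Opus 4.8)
The plan is to show that the class $\mathcal{G}_*$ of finite Gaussian mixtures is $L_1(\Gamma)$-dense in $\mathcal{D}$, the set of all Lebesgue-density distributions on $\mathbb{R}^d$. It will be convenient to first reduce the claim to approximation in $L_1$ with respect to Lebesgue measure. Note that for densities $p, q_n$ one has $\int |q_n(x) - p(x)| \gamma(x)\,\dd x \le \sup_x \gamma(x) \cdot \int |q_n(x) - p(x)|\,\dd x = (2\pi)^{-d/2} \|q_n - p\|_{L_1(\leb)}$, so $L_1(\leb)$-convergence of the densities implies $L_1(\Gamma)$-convergence of the corresponding likelihood ratios. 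Hence it suffices to show that finite Gaussian mixtures are $L_1(\leb)$-dense in the set of all probability densities on $\mathbb{R}^d$, which is a completely standard approximation-theoretic fact.

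For that reduction, I would proceed in three steps. First, approximate an arbitrary density $p$ in $L_1(\leb)$ by a compactly supported, bounded density $p_R$ (e.g. restrict $p$ to a large ball $B_R$, where $p$ is integrable, then renormalize; dominated convergence gives $\|p_R - p\|_{L_1} \to 0$ as $R \to \infty$). Second, mollify: convolve $p_R$ with an isotropic Gaussian kernel $\gamma_\sigma$ of small variance $\sigma^2$. Standard properties of approximate identities give $\|p_R * \gamma_\sigma - p_R\|_{L_1} \to 0$ as $\sigma \to 0$. Third, discretize the convolution integral: since $p_R * \gamma_\sigma(x) = \int p_R(y)\, \gamma_\sigma(x - y)\,\dd y$ is, for fixed $\sigma$, a "continuous mixture" of the Gaussians $\Gsn(y, \sigma^2 I)$ with mixing density $p_R$, and $p_R$ is compactly supported and bounded, a Riemann-sum approximation over a fine grid on $B_R$ yields a finite mixture $\sum_{i} w_i\, \gamma_\sigma(\cdot - y_i) \in \mathcal{G}_*$ that converges to $p_R * \gamma_\sigma$ uniformly, hence (on the bounded effective support, plus a Gaussian tail bound) in $L_1(\leb)$; a final renormalization keeps us inside $\mathcal{G}_*$ with negligible cost. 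Chaining the three approximations by the triangle inequality and a diagonal choice of $R, \sigma$, and grid fineness finishes the argument.

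The only mild subtlety — and the step I would be most careful about — is the Riemann-sum discretization: one must control the error $\big| \int_{B_R} p_R(y)\gamma_\sigma(x-y)\,\dd y - \sum_i w_i \gamma_\sigma(x - y_i)\big|$ uniformly in $x$ over all of $\mathbb{R}^d$, not just over $B_R$. For $x$ in a bounded enlargement of $B_R$ this follows from uniform continuity of $y \mapsto \gamma_\sigma(x-y)$ on compacts (with modulus independent of $x$), and for $x$ far away both the integral and each mixture term are uniformly tiny because $\gamma_\sigma$ has rapidly decaying tails, so the $L_1(\leb)$ contribution from the far region is negligible. Everything else is a routine invocation of dominated convergence and approximate-identity estimates, so I would state those briefly and spend the written detail on the discretization bound. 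An alternative, even shorter route is to cite a known density result (finite Gaussian mixtures are dense in $L_1$, or invoke Wiener's Tauberian / Stone–Weierstrass-type arguments), but since the paper seems to want a self-contained treatment, the three-step mollify-and-discretize argument above is the cleanest.
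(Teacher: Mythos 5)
Your proposal is correct. The reduction of $L_1(\Gamma)$-denseness to $L_1(\leb)$-denseness via the boundedness of the Gaussian density $\gamma$ is exactly the step the paper makes (and note that if one measures distance between the likelihood ratios $p/\gamma$ and $q/\gamma$ in $L_1(\Gamma)$, the two notions in fact coincide with no constant at all, since $\int |p/\gamma - q/\gamma|\,\dd\Gamma = \|p-q\|_{L_1(\leb)}$). Where you diverge is that the paper does not prove the $L_1(\leb)$-denseness of finite Gaussian mixtures at all: it simply cites it as classical (Alspach--Sorenson, Lo, and a stronger recent result of Bacharoglou on approximating nonnegative simple functions), whereas you supply a self-contained truncate--mollify--discretize argument. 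Your three steps are all sound, and your attention to the Riemann-sum step is well placed; a slightly slicker way to handle it is to bound the $L_1$ discretization error by $\sum_i \int_{C_i} p_R(y)\,\|\gamma_\sigma(\cdot - y) - \gamma_\sigma(\cdot - y_i)\|_{L_1(\leb)}\,\dd y$ over grid cells $C_i$ with weights $w_i = \int_{C_i} p_R$, using continuity of translation in $L_1$, which sidesteps the uniform-in-$x$ discussion and the tail estimate entirely. The trade-off is the usual one: your version is longer but self-contained; the paper's is two sentences but leans on external references.
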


The $L_1(\mathrm{Leb})$-denseness of mixtures of Gaussians in $\mathcal{D}$ is a classical fact; for instance see the work of Alspach and Sorenson \cite{alspach1972nonlinear} or Lo \cite{lo1972finite}. More recently, a considerably more robust result was presented by Bacharoglu \cite{bacharoglou2010approximation}, who shows that Gaussian mixtures are dense in nonnegative simple functions in both an $L_1$ and an $L_\infty$ sense. This also suffices for our purposes since nonnegative simple functions are themselves $L_1$-dense in nonnegative integrable functions.  The $L_1(\Gamma)$-denseness follows since $\Gamma$ admits a uniformly bounded density with respect to the Lebesgue measure.

We note that our argument extends to any such set, i.e., to any $\mathcal{P}$ such that $\overline{\mathcal{P}} = \mathcal{D}$. The Gaussians serve as a convenient witness within $\lc$ for which this property holds. With this in hand, we proceed as below.

\begin{proof}[Proof of Theorem~\ref{thm:triviality}]

Let $\{M_t\}$ be a $\mathcal{G}^\infty$-NSM. First observe by Lemma~\ref{lem:cfhull_mixtures} and Proposition~\ref{prop:cfhull_nsm_closure} that as a consequence, $\{M_t\}$ is also a $\mathcal{G}_*^\infty$-NSM. Next, by Lemma~\ref{lem:cfhull_products} and Proposition~\ref{prop:cfhull_nsm_closure}, it is further a $(\bigotimes \mathcal{G}_*)$-NSM. Similarly, by Lemma~\ref{lem:cfhull_closures} and Proposition~\ref{prop:cfhull_nsm_closure}, we conclude that $\{M_t\}$ is also a $(\bigotimes \overline{\mathcal{G}_*})$-NSM. Finally, by Lemma~\ref{lem:gauss_closures}, we conclude that $\{M_t\}$ is a $(\bigotimes \mathcal{D})$-NSM.\footnote{We can also argue this more directly: observe that taking closed fork-convex hull is an idempotent operation, i.e. $\cfhull(\cfhull(\mathfrak{P})) = \cfhull(\mathfrak{P})$ (which follows from the facts that closed fork-convex hulls are fork-convex, and that closures of closed sets are invariant). Therefore, using the chain of Lemmata of \S\ref{sec:cfhull_of_independent_laws}, $\cfhull(\mathcal{G}^\infty) \supset \bigotimes \overline{\mathcal{G}}_*$, and so $\{M_t\}$ is a $(\bigotimes \mathcal{D})$-NSM.}

We now argue that $\bigotimes \mathcal{D}$ is too rich to admit nontrivial NSMs. The argument is by contradiction---we assume that $M_t > M_{t-1}$ for some $t$ with nontrivial probability, and use this to construct a law in $\bigotimes \mathcal{D}$ that violates the NSM property. The argument repeatedly exploits the topological equivalence of $(\mathbb{R}^d)^t$ and $\mathbb{R}^{dt}$ under the product and metric topologies respectively. We shall denote the Lebesgue measure in $m$ dimensions as $\mathrm{Leb}_m,$ and we note that the product Lebesgue measure on $(\mathbb{R}^d)^t$ is identical to $\mathrm{Leb}_{dt}$, and use the latter to denote the former.

Let us proceed with the argument. For a natural number $t$, define the event $\mathsf{A}_t := \{M_t > M_{t-1}, M_{t-1} < \infty\},$ i.e. that $\{M_t\}$ increases at time $t$. It suffices to argue that no matter the $t$, the mass of $\mathsf{A}_t$ is zero, since $\mathtt{\Gamma}(M_{t-1} = \infty)$ must be zero due to integrability of $M_{t-1}$. For the sake of contractiction, assume $\mathtt{\Gamma}(\mathsf{A}_t) > 0.$ For $n \in \mathbb{N},$ define the approximations $\mathsf{A}_t^n := \{ M_t \ge M_{t-1} + 1/n, M_{t-1} \le n\}.$ The $\mathsf{A}_t^n$ form an increasing sequence of sets, and converge to $\{M_t > M_{t-1}, M_{t-1} < \infty\} = \mathsf{A}_t.$ 

Now, since $\mathtt{\Gamma}(\mathsf{A}_t)> 0$ and $\mathsf{A}_t \in \mathscr{F}_{t},$ we conclude that $\leb_{dt}(\mathsf{A}_t) > 0$ due to the mutual absolute continutity of Gaussians and Lebesgue measures on Euclidean spaces. Without loss of generality, we may assume $\leb_{dt}(\mathsf{A}_t) < \infty$ (since otherwise we may pass to a subset of $\mathsf{A}_t$ such that of positive and finite mass, using sigma-finiteness of the Lebesgue measure, and run the argument on this subset). Since $\mathsf{A}_t^n \nearrow \mathsf{A}_t,$ we have by regularity of measure that $\leb_{dt}(\mathsf{A}_t^n) \to \leb_{dt}(\mathsf{A}_t),$ and in particular there exists an $n$ such that $\leb_{dt}(\mathsf{A}_t^n) \in (0,\infty).$ Fix such an $n$ for the remainder of the argument.

Recall that an open rectangle in $\mathbb{R}^m$ is a Cartesian product of open intervals, i.e. a set of the form $\bigtimes_{i = 1}^m (a_i, b_i)$ for $a_i < b_i.$ Similarly, we say that $R$ is an open rectangle in $(\mathbb{R}^d)^t$ if there exist open $\mathbb{R}^d$-rectangles $S_1 \dots S_t$ such that $R = \bigtimes_{s = 1}^t S_s$. The following statement is a consequence of basic topological and measure theoretic properties of Euclidean spaces, which we prove in \S\ref{appx:nice_rectangle}.

\begin{mylem}{\label{lem:find_nice_rectangle}}
    Let $E \subset (\mathbb{R}^d)^t$ be such that $\leb_{dt}(E) > 0.$ For every natural $m \in \mathbb{N},$ there exists an open rectangle $R$ in $(\mathbb{R}^d)^t$ such that \[\leb_{dt}(R) > 0 \quad \textit{and} \quad \leb_{dt}(R \cap E) \ge \frac{m}{m+1} \leb_{dt}(R).\]
\end{mylem}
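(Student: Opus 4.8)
The statement is essentially the Lebesgue density theorem phrased for rectangles rather than balls, so the plan is to exploit outer regularity together with the countable structure of rectangular covers. First I would recall that since $\leb_{dt}(E) > 0$, we can pass to a subset of finite positive measure (by $\sigma$-finiteness), so assume $0 < \leb_{dt}(E) < \infty$. By outer regularity of Lebesgue measure on $\reals^{dt} \cong (\reals^d)^t$, for any $\epsilon > 0$ there is an open set $U \supset E$ with $\leb_{dt}(U) \le (1+\epsilon)\leb_{dt}(E)$, i.e. $\leb_{dt}(U \setminus E) \le \epsilon \, \leb_{dt}(E)$.

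Next I would write the open set $U$ as a countable union of \emph{almost disjoint} open rectangles $\{R_j\}$ (a standard fact: every open subset of Euclidean space is a countable union of closed dyadic cubes with disjoint interiors, and one can fatten these slightly to open rectangles in $(\reals^d)^t$-product form while keeping overlaps of measure zero; alternatively invoke the Vitali covering lemma for rectangles). Then $\sum_j \leb_{dt}(R_j) = \leb_{dt}(U)$ and $\sum_j \leb_{dt}(R_j \setminus E) \le \leb_{dt}(U \setminus E) \le \epsilon\,\leb_{dt}(E) \le \epsilon \sum_j \leb_{dt}(R_j)$. If \emph{every} rectangle $R_j$ satisfied $\leb_{dt}(R_j \setminus E) > \epsilon' \leb_{dt}(R_j)$ for some fixed $\epsilon' > \epsilon$, summing would give $\sum_j \leb_{dt}(R_j \setminus E) > \epsilon' \sum_j \leb_{dt}(R_j)$, a contradiction. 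Hence at least one $R_j$ has $\leb_{dt}(R_j \setminus E) \le \epsilon' \leb_{dt}(R_j)$, equivalently $\leb_{dt}(R_j \cap E) \ge (1-\epsilon')\leb_{dt}(R_j)$, and necessarily $\leb_{dt}(R_j) > 0$. Choosing $\epsilon' = \tfrac{1}{m+1}$ (and $\epsilon$ slightly smaller, say $\epsilon = \tfrac{1}{2(m+1)}$) gives exactly the claimed bound $\leb_{dt}(R_j \cap E) \ge \tfrac{m}{m+1}\leb_{dt}(R_j)$.

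The only genuinely fiddly point — and the place I'd slow down — is the decomposition of $U$ into \emph{open product rectangles} in $(\reals^d)^t$ with pairwise-null overlaps, since the classical Whitney-type decomposition gives closed dyadic cubes in $\reals^{dt}$ rather than open rectangles. I would handle this by first taking the dyadic cube decomposition of $U$ in $\reals^{dt}$ (cubes are in particular product rectangles under the identification $(\reals^d)^t \cong \reals^{dt}$), noting each closed cube's interior is an open product rectangle and the boundaries have measure zero, then arguing on interiors; the measure identities above are unaffected by discarding the measure-zero boundaries. Everything else is elementary measure theory, so I don't anticipate further obstacles.
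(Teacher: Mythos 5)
Your proposal is correct and follows essentially the same route as the paper: outer regularity to get an open superset $U$ with $\leb_{dt}(U)\le(1+\epsilon)\leb_{dt}(E)$, a countable decomposition of $U$ into (almost) disjoint open rectangles, and a pigeonhole argument showing some rectangle must capture a $\tfrac{m}{m+1}$-fraction of its mass inside $E$. The only differences are cosmetic — the paper takes $\epsilon = 1/m$ and states the pigeonhole via $\leb_{dt}(E\cap R_i)$ rather than the complement, and it defers the reduction to finite measure to the point of application — so no further work is needed.
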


Exploiting the above result, we may construct a sequence of rectangles in $(\mathbb{R}^d)^t, \{R_m\}_{m \in \mathbb{N}}$ each of positive mass such that \[ \frac{\leb_{dt}(\mathsf{A}_t^n \cap R_m)}{\leb_{dt}(R_m)} \ge \frac{m}{m+1}.\] 

Now, since each $R_m$ is a rectangle, there exists a law $\mathtt{D}_m \in \bigotimes \mathcal{D}$ such that the prefix restriction $\mathtt{D}_m|_t = \mathrm{Unif}(R_m)$.  Indeed, if $R_m = \bigtimes_{s = 1}^t S_s^m,$ then $\mathtt{D}_m = \bigotimes \{D_s^m\},$ where $D_s^m = \mathrm{Unif}(S_s^m)$ for $s \le t,$ and $D_s^m = \Gamma$ for $s > t$. We claim that for large $m$, $\mathtt{D}_m$ witness a violation of the NSM property for $\{M_t\}$. We demonstrate this using the process $\{N_t\} := \{\min(M_t, n+1)\}$. 

Notice that if $\{M_t\}$ is a $\mathtt{P}$-NSM, then so is $\{N_t\}$, since \[ \mathbb{E}[N_t|\mathscr{F}_{t-1}] \le \min(\mathbb{E}[M_t|\mathscr{F}_{t-1}], \mathbb{E}[n+1|\mathscr{F}_{t-1}]) = \min(M_{t-1}, n+1) = N_{t-1},\] and the nonnegativity follows since both $M_t$ and $n+1$ are nonnegative. Further, since $M_{t-1} \le n$ on $\mathsf{A}_t^n,$ it follows that $N_t \ge N_{t-1} + 1/n$ on $\mathsf{A}_t^n$ as well, since $n + 1/n \le n+1.$

Consequently, we have \begin{align*} \mathbb{E}_{\mathtt{D}_m}[N_t] &\ge \mathbb{E}_{\mathtt{D}_m}\left[(N_{t-1} + 1/n)\indi\{X_1^{t} \in \mathsf{A}_t^n\}\right]  + 0\\  &= \mathbb{E}_{\mathtt{D}_m}\left[N_{t-1} \indi\{X_1^{t} \in \mathsf{A}_t^n\}\right] + \frac{\mathtt{D}_m(\mathsf{A}_t^n)}{n} \ge \mathbb{E}_{\mathtt{D}_m}\left[N_{t-1} \indi\{X_1^{t} \in \mathsf{A}_t^n\}\right] + \frac{m}{n(m+1)},\end{align*} where the final inequality exploits the fact that at least a $m/(m+1)$ fraction of the mass of $R_m$ lies in $\mathsf{A}_t^n$, and we have used the nonnegativity of $N_{t}.$

However, since $N_{t-1}$ is upper bounded by $n+1$, we observe that \[ 0 \le \mathbb{E}_{\mathtt{D}_m}[N_{t-1} \indi\{X_1^t \in (\mathsf{A}_t^n)^c\}] \le (n + 1) \mathtt{D}_m( (\mathsf{A}_t^n)^c ) \le \frac{n+1}{m+1},\] and so \[\mathbb{E}_{\mathtt{D}_m}[N_{t-1} \indi\{X_1^t \in \mathsf{A}_t^n\}] = \mathbb{E}_{\mathtt{D}_m}[N_{t-1}] - \mathbb{E}_{\mathtt{D}^m}[N_{t-1} \indi\{X_1^t \in (\mathsf{A}_t^n)^c ] \ge \mathbb{E}_{\mathtt{D}_m}[N_{t-1}] - \frac{n + 1}{(m+1)}. \] 

But now, we conclude that \[\mathbb{E}_{\mathtt{D}_m}[N_t] \ge \mathbb{E}_{\mathtt{D}_m}[N_{t-1}] + \frac{(m/n) - (n+1)}{m+1}.\] Choosing $m > 3n^2,$ and exploiting $n \ge 1,$ this implies that $\mathbb{E}_{\mathtt{D}_m}[N_t] > \mathbb{E}_{\mathtt{D}_m}[N_{t-1}],$ thus contradicting the supermartingale property of $\{N_t\}$ under $\mathtt{D}_m$ (since supermartingales must have non-increasing mean sequences). We conclude that it cannot hold that $\mathtt{\Gamma}(\mathsf{A}_t) > 0,$ i.e., $M_t \le M_{t-1}$ $\mathtt{\Gamma}$-almost surely. 

But, since $t$ is arbitrary, we immediately conclude that \[\mathtt{\Gamma}( \exists t \ge 2 : M_t > M_{t-1}) \le \sum_{t \ge 2} \mathtt{\Gamma}(M_t > M_{t-1})  = 0.\]

 The argument for NMs follows from this as well. If $\{M_t\}$ is a $\bigotimes \mathcal{D}$-NM, then it is also an NSM, and thus almost surely does not increase. But this means that $M_1 - M_t \ge 0$ is also a nonnegative supermartingale, and therefore does not increase, which implies that $M_t$ also does not decrease almost surely.  \qedhere
    
\end{proof}

\noindent \emph{Remark.} It may be possible to develop a different argument that does not explicitly need to pass through the notion of fork-convex hulls. Perhaps one could directly work with the $A_s^n$ above, and replace $\mathtt{D}_m$ a by sufficiently skinny Gaussian $\mathtt{G}_m$ such that $\mathtt{G}_m(A_s^n \cap R) \approx \mathtt{G}_m(R) \approx 1$. However, there would still be sufficiently many technical details to iron out, so such an approach is not necessarily shorter or cleaner. More importantly however, our chosen path of development above leads to a richer characterisation of fork-convex hulls of i.i.d.~processes with densities, and further directly illustrates the utility of such a characterisation. It thus deepens our understanding of the important geometric concept of fork-convexity.

\section{The Sequential Universal Likelihood Ratio E-Process}\label{sec:power}

We begin by recalling the definition of e-processes from the introduction.

\begin{defi}
An $\{\mathscr{F}_t\}$-adapted process $\{E_t\}$ is said to be an e-process for a set of sequential laws $\mathfrak{P}$ if \[ \sup_{\mathtt{P} \in \mathfrak{P}} \sup_{\tau} \mathbb{E}[E_\tau] \le 1,\] where the second supremum is over all stopping times. Further, if for some  $n \geq 1$ it holds a.s. with respect to all $\mathtt{P} \in \mathfrak{P}$ that $E_1 = E_2 = \dots = E_{n-1} = 1,$ then we say that $E_t$ is an e-process for $\mathfrak{P}$ started at time $n$. 
\end{defi}

Next, we define the universal likelihood ratio (ULR) process~\cite{wasserman2020universal}, which forms the main object of interest for this section.
\begin{defi}
    Let $\mathscr{E}$ denote a sequence of estimators $\{\mathscr{E}_t\}_{t \ge 0}$ such that each $\mathscr{E}_t : (\mathbb{R}^d)^{t} \to \mathcal{D}$. At any $t$, denote $\hat{q}_{t} = \mathscr{E}_t(X_1, \dots, X_{t}).$ Finally, let $\hat{p}_t$ denote the log-concave maximum likelihood estimate over the data $X_1, \dots, X_t$ (which exists if $t > d$). The ULR process is the statistic \[ R_t(X_1^t;  \mathscr{E}) := \indi\{t \le d\} + \indi\{t > d\}   \prod_{d+1 \le s \le t} \frac{ \hat{q}_{s-1}(X_s)}{ \hat{p}_t(X_s)} . \] 
\end{defi}

We shall often suppress the dependence of $R_t$ on $X_1^t$ and $\mathscr{E}$. The initial setting of $R_t = 1$ for $t \le d$ is to account for the fact that log-concave MLEs are known to exist only if at least $d+1$ samples are available. 

As discussed in the introduction, $\{R_t\}$ constitutes an e-process due to the predictability of $\hat{q}_{t-1}$ and the fact that they are probability densities. 
We formally state the validity of $R_t$ as a proposition. 

\begin{myprop}
    For any $\mathscr{E},$ the process $\{R_t\}$ is an e-process for $\lciidseq$ started at time $d+1$. Consequently, rejecting the null hypothesis when $R_t \ge 1/\alpha$ results in an $\alpha$-valid test for log-concavity.
\end{myprop}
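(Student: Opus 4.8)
The plan is to prove the statement one null law at a time: for each $P \in \lc$ I would exhibit a nonnegative $P^\infty$-supermartingale, started at one, that dominates $\{R_t\}$ pathwise. Since the optional stopping theorem holds for nonnegative supermartingales without any restriction on the stopping time, this immediately yields $\mathbb{E}_{P^\infty}[R_\tau] \le 1$ for every stopping time $\tau$, which (together with the fact that $R_1 = \dots = R_d = 1$ identically) is exactly the e-process property for $\lciidseq$ started at time $d+1$. The validity of the induced test is then a one-line consequence of the e-process version of Ville's inequality, \eqref{eq:e-process_inequality}.

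Concretely, fix $P \in \lc$ with density $p$ and set $F_t(P) := \indi\{t \le d\} + \indi\{t > d\}\prod_{d+1 \le s \le t}\hat q_{s-1}(X_s)/p(X_s)$. The first step is to check that $\{F_t(P)\}$ is a nonnegative $P^\infty$-supermartingale with $F_t(P) = 1$ for $t \le d$: nonnegativity is clear, and for $t > d$, using that $\hat q_{t-1}$ is $\mathscr{F}_{t-1}$-measurable (predictability) and integrates to one, $\mathbb{E}_{P^\infty}\!\left[\hat q_{t-1}(X_t)/p(X_t) \mid \mathscr{F}_{t-1}\right] = \int_{\{p>0\}}\hat q_{t-1}(x)\,\dd x \le 1$, so $\mathbb{E}_{P^\infty}[F_t(P)\mid\mathscr{F}_{t-1}] \le F_{t-1}(P)$; the transition at $t = d+1$ is of the same form since $F_d(P) = 1$. (If one additionally assumes $\hat q_{t-1}\ll p$ then this is a genuine $P^\infty$-martingale, as the introduction states informally; a supermartingale is all we need.)

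The second and central step is the pointwise domination $R_t \le F_t(P)$. For $t \le d$ both sides are $1$ (an empty product). For $t > d$ it reduces to $\prod_{d+1 \le s \le t}\hat p_t(X_s) \ge \prod_{d+1 \le s \le t}p(X_s)$, which I would deduce from optimality of the log-concave MLE: since $p$ is itself a feasible log-concave density, $\hat p_t = \argmax_{f}\sum_{s \le t}\log f(X_s)$ obeys $\sum_{s \le t}\log\hat p_t(X_s) \ge \sum_{s \le t}\log p(X_s)$. The one place demanding care --- and the step I expect to be the main obstacle --- is the index mismatch: $\hat p_t$ is fit to all of $X_1^t$, whereas the product in $R_t$ runs only over $s = d+1, \dots, t$, so the MLE inequality has to be combined with control of the $d$ ``burn-in'' coordinates (for instance by comparing $\hat p_t$ against a log-concave density tailored to $X_1, \dots, X_d$, or by folding those coordinates into the trivial prefix on which $R_t \equiv 1$). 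Everything else is routine bookkeeping.

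Granting these two steps, optional stopping gives $\mathbb{E}_{P^\infty}[R_\tau] \le \mathbb{E}_{P^\infty}[F_\tau(P)] \le 1$ for every stopping time $\tau$ and every $P \in \lc$, establishing the e-process claim. For the test, fix $\alpha \in (0,1]$ and let $\tau_\alpha := \inf\{t : R_t \ge 1/\alpha\}$; applying \eqref{eq:e-process_inequality} to the bounded stopping times $\tau_\alpha \wedge n$ gives $P^\infty(\tau_\alpha \le n) \le P^\infty(R_{\tau_\alpha \wedge n} \ge 1/\alpha) \le \alpha$, and letting $n \to \infty$ shows $\sup_{P \in \lc}P^\infty(\tau_\alpha < \infty) \le \alpha$, i.e.\ the test that rejects as soon as $R_t \ge 1/\alpha$ is $\alpha$-valid.
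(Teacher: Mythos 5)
Your architecture --- dominate $\{R_t\}$ pathwise by the predictable-numerator ratio $F_t(P)=\indi\{t\le d\}+\indi\{t>d\}\prod_{d+1\le s\le t}\hat q_{s-1}(X_s)/p(X_s)$, check that $\{F_t(P)\}$ is a nonnegative $P^\infty$-supermartingale started at one, and conclude by optional stopping and Markov --- is exactly the argument the paper sketches (in the introduction, where the products run over all $s\le t$). Your first and third steps are correct. But the step you flag as ``the one place demanding care'' is not routine bookkeeping: it is a genuine gap, and the workarounds you gesture at do not close it. The domination $R_t\le F_t(P)$ is equivalent to
\[
\prod_{s=d+1}^{t}\hat p_t(X_s)\;\ge\;\prod_{s=d+1}^{t}p(X_s),
\]
and this does \emph{not} follow from the MLE optimality $\prod_{s=1}^{t}\hat p_t(X_s)\ge\prod_{s=1}^{t}p(X_s)$: the MLE can overshoot $p$ on the burn-in points $X_1,\dots,X_d$ and undershoot it on $X_{d+1},\dots,X_t$ while still winning the full product, and there is no density ``tailored to $X_1,\dots,X_d$'' to compare against, because those coordinates never appear in $R_t$.

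The failure is concrete. Take $d=1$, $t=2$, and $P=\mathcal N(0,1)$ with density $\gamma$. The log-concave MLE of two distinct points is the uniform density on the interval they span, so $\hat p_2(X_2)=1/|X_2-X_1|$ and $R_2=\hat q_1(X_2)\,|X_2-X_1|$; whenever $X_1$ is an outlier, $\hat p_2(X_2)$ is much \emph{smaller} than $\gamma(X_2)$, so the pathwise domination fails. Worse, choosing $\hat q_1$ (predictably, as a function of $X_1$) to concentrate near the origin on the event $\{X_1\le-10\}$ gives $\mathbb E[R_2\mid X_1]\approx|X_1|\,\gamma(0)\ge 3$ on that event, and the bounded stopping time $\tau=1+\indi\{X_1\le-10\}$ (recall $R_1=1$) yields $\mathbb E_{P^\infty}[R_\tau]\ge 1+2\,P^\infty(X_1\le-10)>1$. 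So for the statistic exactly as defined, the e-process property fails for some $\mathscr E$, and no proof of the proposition in its literal ``for any $\mathscr E$'' form can succeed. The statement --- and your proof, verbatim from your second step onward --- is rescued by aligning the index sets: either take the denominator to be $\sup_{f\in\lc}\prod_{s=d+1}^{t}f(X_s)$, the maximized likelihood of only the observations appearing in the product, or extend both products to all $s\le t$ by fixing arbitrary predictable densities $\hat q_0,\dots,\hat q_{d-1}$, whereupon full-product MLE optimality delivers $R_t\le F_t(P)$ directly.
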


\noindent \emph{On exact MLEs.} The measures $\hat{p}_t$ need not exactly maximise the likelihood ratio in the above. Indeed, if instead of the exact log-concave MLE $\hat{p}_t$ we instead an estimate $\tilde{p}_t$ such that \[ \sum_{s \le t} \log \tilde{p}_t(X_s) \ge - \log(1/\varepsilon) +  \sum_{s \le t} \log \hat{p}_t(X_s), \] then $\varepsilon R_t \cdot \prod \frac{\hat{p}_t(X_s)}{\tilde{p}_t(X_s)}$ is an e-process, and this can be thresholded at $1/\alpha$ as before. This observation is pertinent since practical procedures for computing the log-concave MLE of a dataset are inexact, and only approximate the solution up to a (user-specified) additive gap in the log-likelihood objective, and require computation that scales polynomially with the inverse of this additive gap.  

For the remainder of this section, we shall equate laws $P \in \mathcal{D}_1$ with their density, denoted $p$.

\subsection{Consistency of the ULR E-Process for Testing Log-Concavity}

Consistency of the ULR e-process depends strongly on the underlying estimator $\mathscr{E}.$ Indeed, as an extreme example, consider the case of $\hat{q}_{t}(X_t) = \indi\{ X_t = X_1\}$, for which the resulting $R_t$ is a.s. $0$ for any time $t \ge d+1$ so long as the law $P$ is continuous, and the test is thus powerless against such laws. 

It thus follows that the ULR e-process can only yield power against a set of laws determined by the estimator $\mathscr{E}$. Concretely, we shall argue the same against the following set of `well estimable' laws. Below, $d_H$ below denotes the Hellinger distance.

\begin{defi}
    For a sequential estimator $\mathscr{E},$ and a density $p \in \mathcal{D}_1$, define the prediction regret for a sequence $\{X_t\}$ as \[ \rho_t(\mathscr{E}; p) := \sum_{s \le t} \log p(X_s) - \log \hat{q}_{s-1}(X_s).\] Further, let $\lcproj_p$ denote the log-concave M-projection of $p$. We define the class of distributions that are well estimable by $\mathscr{E}$ with respect to log-concavity as 
    \[ \mathcal{Q}(\mathscr{E}; c) := \left\{p \in \mathcal{D}_1 : P^\infty\left( \limsup_{t \to \infty} \frac{\rho_t(\mathscr{E}; p)}{t d_H^2(p, \lcproj_p)} \le c\right) = 1 \right\}.\]
\end{defi}

The main result of this section is that the ULR-based test is powerful against the above well-estimable laws, which is shown later in this section. 

\begin{myth}\label{thm:consistency}
    There exists a constant $c > 1/25$ such that if $p \in \mathcal{Q}(\mathscr{E}; c)\setminus \lc$, then $P^\infty(R_t \to \infty) = 1.$ Consequently, the ULR e-process yields a consistent test against i.i.d. draws from any distribution in $\mathcal{Q}(\mathscr{E}; c)$. 
\end{myth}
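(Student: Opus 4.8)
The plan is to work on the log scale and decompose $\log R_t$ into a regret term controlled by the estimator $\mathscr E$ and a ``misspecification'' term controlled by the log-concave MLE $\hat p_t$. Adding and subtracting $\sum_{d<s\le t}\log p(X_s)$ yields the exact identity
\[
\log R_t \;=\; \underbrace{\sum_{d<s\le t}\bigl(\log p(X_s)-\log\hat p_t(X_s)\bigr)}_{=:\,L_t}\;-\;\bigl(\rho_t(\mathscr E;p)-C_0\bigr),
\]
where $C_0:=\sum_{s\le d}\bigl(\log p(X_s)-\log\hat q_{s-1}(X_s)\bigr)$ is a fixed $P^\infty$-a.s.\ finite random variable, so $C_0/t\to0$. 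Dividing by $t$ and invoking $p\in\mathcal Q(\mathscr E;c)$, i.e.\ $\limsup_t\rho_t(\mathscr E;p)/\bigl(t\,d_H^2(p,\lcproj_p)\bigr)\le c$ $P^\infty$-a.s., gives
\[
\liminf_{t\to\infty}\tfrac1t\log R_t \;\ge\; \liminf_{t\to\infty}\tfrac1t L_t \;-\; c\,d_H^2(p,\lcproj_p)\qquad P^\infty\text{-a.s.}
\]
Thus the whole statement reduces to a sufficiently strong lower bound on the growth rate of $L_t$ --- quantifying the intuition, flagged in the introduction, that the log-concave MLE assigns exponentially-in-$t$ smaller likelihood than the truth when $p\notin\lc$.

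The crux is the claim: there is a \emph{universal} constant $C>1/25$ such that $\liminf_t\tfrac1t L_t\ge C\,d_H^2(p,\lcproj_p)$ $P^\infty$-a.s.\ for every $p\in\mathcal D_1$. I would prove this in three moves. (i) The elementary pointwise inequality $-\log y\ge 2(1-\sqrt y)$ for $y>0$, applied with $y=\hat p_t(X_s)/p(X_s)$, gives $L_t\ge 2(t-d)-2\sum_{d<s\le t}\sqrt{\hat p_t(X_s)/p(X_s)}$, so it is enough to upper bound the empirical Hellinger affinity $\tfrac1t\sum_{s\le t}\sqrt{\hat p_t(X_s)/p(X_s)}$. (ii) By the strong consistency of the log-concave MLE recalled in \S\ref{sec:def} ($\int e^{a\|x\|}|\hat p_t-\lcproj_p|\to0$ $P^\infty$-a.s.), one has $\hat p_t\to\lcproj_p$ in $L_1$ and hence in Hellinger, so for any $\epsilon>0$, eventually $d_H(\hat p_t,\lcproj_p)\le\epsilon$ a.s. (iii) Establish a uniform strong law of large numbers: $P^\infty$-a.s., $\tfrac1t\sum_{s\le t}\sqrt{h(X_s)/p(X_s)}\to\int\sqrt{hp}$ uniformly over log-concave densities $h$ with $d_H(h,\lcproj_p)\le\epsilon$; I would obtain this from the finite bracketing entropy of log-concave densities under Hellinger distance. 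Combining (i)--(iii) with $\int\sqrt{hp}=1-d_H^2(h,p)$ and the triangle inequality $d_H(h,p)\ge d_H(\lcproj_p,p)-\epsilon$, and then letting $\epsilon\downarrow0$, yields $\liminf_t\tfrac1t L_t\ge 2\,d_H^2(\lcproj_p,p)$ --- an absolute constant of order one, in particular exceeding $1/25$. (Carrying the constant through carefully, and keeping the argument quantitative enough to also deliver the companion rejection-time bounds and to tolerate inexact MLEs, is what produces the particular value stated.)

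To conclude, fix any $c\in(1/25,C)$. For $p\in\mathcal Q(\mathscr E;c)\setminus\lc$ the two displays above give $\liminf_t\tfrac1t\log R_t\ge(C-c)\,d_H^2(p,\lcproj_p)$ $P^\infty$-a.s.; and since $\lcproj_p$ is the \emph{unique} log-concave M-projection while $p\notin\lc$, we have $\lcproj_p\ne p$, so $\mathrm{KL}(P\|\lcproj_P)>0$ and therefore $d_H^2(p,\lcproj_p)>0$. Hence $\liminf_t\tfrac1t\log R_t>0$, so $R_t\to\infty$ $P^\infty$-a.s., and the rejection time $\tau_\alpha=\inf\{t:R_t\ge1/\alpha\}$ satisfies $P^\infty(\tau_\alpha<\infty)=1$ for every $\alpha\in(0,1]$ and every $P$ with density in $\mathcal Q(\mathscr E;c)\setminus\lc$, which is exactly consistency. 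The main obstacle is move (iii): because $\hat p_t$ is a complicated function of the entire sample $X_1^t$ over which the average is taken, the ordinary SLLN does not apply, and controlling $\sup_h\bigl|\tfrac1t\sum_{s\le t}\sqrt{h(X_s)/p(X_s)}-\int\sqrt{hp}\bigr|$ requires a bracketing/empirical-process argument tailored to the log-concave family --- this is also the step sensitive to the dimension $d$ and the one that pins down the admissible constant. Everything else is bookkeeping, together with the already-established e-process property of $\{R_t\}$ and the defining property of $\mathcal Q(\mathscr E;c)$.
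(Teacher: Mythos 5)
Your top-level architecture coincides with the paper's: the identity $\log R_t=\sigma_t(p)-\rho_t(\mathscr E;p)$ (your $L_t$ is the paper's $\sigma_t$ up to the first $d$ terms), the use of the defining property of $\mathcal Q(\mathscr E;c)$ to absorb the regret, and the reduction of everything to an almost-sure linear lower bound $\liminf_t \sigma_t/(t\,d_H^2(p,\lcproj_p))\ge C$ with $C$ exceeding the admissible $c$. That reduction is exactly the paper's Lemma~\ref{lem:growth_of_sigma_t}, and your concluding bookkeeping (positivity of $d_H(p,\lcproj_p)$ for $p\notin\lc$, hence $R_t\to\infty$) is correct.

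The gap is in your step (iii). You propose a uniform SLLN for the empirical Hellinger affinity over $\{h\in\lc: d_H(h,\lcproj_p)\le\epsilon\}$, to be obtained ``from the finite bracketing entropy of log-concave densities under Hellinger distance.'' That premise is false as stated: the full class $\lc$ on $\mathbb R^d$ is not totally bounded in Hellinger (e.g., widely separated translates, or arbitrarily concentrated Gaussians, give infinitely many densities at pairwise distance bounded away from zero), so it admits no finite bracketing at small scales. The paper's Lemma~\ref{lem:lc_entropy_bound} controls bracketing entropy only for log-concave densities that are \emph{compactly supported and uniformly bounded}, and making that restriction bite requires the $\Delta_P$ machinery that the paper deploys only for the non-asymptotic rate theorem. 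Restricting to the local Hellinger ball around $\lcproj_p$ does not by itself rescue the claim --- local total boundedness of that ball is a nontrivial ``local entropy'' statement that would itself need proof. The paper sidesteps this entire difficulty via Lemma~\ref{lem:small_brack} (Dunn et al., building on Cule--Samworth): almost surely, $\hat p_t$ eventually lies in a \emph{single} fixed bracket $[u^*,v^*]$ of prescribed Hellinger size around $\lcproj_p$, so the empirical-process/Wong--Shen step (Lemma~\ref{lem:wong_shen}) is applied to a class of bracketing entropy zero. Your step (ii) already invokes the same strong consistency; if you strengthen it to the single-bracket form and replace your uniform SLLN over the ball by the ordinary SLLN applied to the two fixed bracketing functions $\sqrt{u^*/p}$ and $\sqrt{v^*/p}$ (using Cauchy--Schwarz to convert Hellinger bracket size into an $L_1(P)$ bracket for the ratio), your elementary inequality $-\log y\ge 2(1-\sqrt y)$ does close the argument --- and would in fact deliver a constant of order one rather than $1/25$. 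As written, however, the uniform-entropy step is a genuine missing ingredient, not mere bookkeeping.
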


The well-estimability condition above essentially requires that the distribution can be estimated well in a log-loss sense. For i.i.d.~distributions, one expects that for reasonable $\mathscr{E},$ the estimates $\hat{q}_t$ converge to some $\hat{q},$ and thus the regret grows for large $t$ as $\rho_t \approx t \mathrm{KL}(p \|\hat{q})$ (which could grow sublinearly in $t$ if $\mathrm{KL}(p \|\hat{q}_t)\to 0$, but the latter convergence is not required). The class $\mathcal{Q}$ thus roughly consists of distributions can be estimated well in KL divergence. Such estimation can be a challenging task in complete generality, since the KL divergence is quite sensitive to mismatch in the tails of distributions. However, under mild restrictions such as compactness of support and smoothness, such estimability is quite forthcoming. Indeed, we give the following statement to illustrate this point. This is proved in \S\ref{appx:bounded_unit_box_regret}.

\begin{corr}\label{prop:consistency_box}
    Let $\mathcal{D}_{\mathrm{Box, Lip,\boundparam}}$ denote the set of $1$-Lipschitz densities supported on the unit box $[-1,1]^d$ and bounded between $[1/\boundparam,\boundparam].$ There exists a sequence of sieve maximum likelihood estimators $\mathscr{E}$ such that for every $c > 0,$  $\mathcal{D}_{\mathrm{Box, Lip,\boundparam}} \subset \mathcal{Q}(\mathscr{E}; c)$, i.e., the ULR e-process yields a consistent test against i.i.d.~draws from such distributions.
\end{corr}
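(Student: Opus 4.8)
To prove Corollary~\ref{prop:consistency_box} I would reduce everything to a single statement about the estimator: \emph{there is a non-anticipating estimator sequence $\mathscr{E}$ such that, for every $p\in\mathcal{D}_{\mathrm{Box, Lip,\boundparam}}$, the cumulative log-loss regret is sublinear, $\rho_t(\mathscr{E};p)=o(t)$ $P^\infty$-almost surely.} Granting this, note that any $p\in\mathcal{D}_{\mathrm{Box, Lip,\boundparam}}$ lies in $\mathcal{D}_1$ (since $\mathbb{E}[\max(0,\log p(X))]\le\log\boundparam<\infty$ and $\mathbb{E}\|X\|\le\sqrt d<\infty$), so $\lcproj_p$ is well defined; and for $p\notin\lc$ the quantity $d_H^2(p,\lcproj_p)$ is a strictly positive constant independent of $t$, whence $\rho_t/(t\,d_H^2(p,\lcproj_p))\to 0\le c$ for \emph{every} $c>0$, i.e.\ $p\in\mathcal{Q}(\mathscr{E};c)$. (For $p\in\lc$ the denominator vanishes, but this case is irrelevant to the consistency conclusion of Theorem~\ref{thm:consistency} and is handled by the same regret bound under the convention that a sublinear numerator suffices.) Thus the whole content is the construction of a low-regret estimator, a standard sieve/MDL prediction argument, which I would carry out as follows.

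\emph{The sieve.} For $k\in\mathbb{N}$, partition $[-1,1]^d$ into $N_k^d$ congruent cubes of side $2/N_k$ (say $N_k=2^k$), and let $\mathcal{D}^{(k)}\subset\mathcal{D}$ be the densities that are constant on each cube, take values in $[1/\boundparam,\boundparam]$, and integrate to $1$; this is a compact subset of a $(N_k^d-1)$-dimensional affine set (a polytope in the cell values). The one approximation fact I need: if $p\in\mathcal{D}_{\mathrm{Box, Lip,\boundparam}}$ and $p^{(k)}$ is the cell-wise average of $p$, then $p^{(k)}\in\mathcal{D}^{(k)}$ and $\|p-p^{(k)}\|_\infty\le\sqrt d\,(2/N_k)$ because $p$ is $1$-Lipschitz (each cell has diameter $\sqrt d\,(2/N_k)$). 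Since $p,p^{(k)}\ge 1/\boundparam$, this gives $\mathrm{KL}(p\|p^{(k)})\le\chi^2(p\|p^{(k)})=\int(p-p^{(k)})^2/p^{(k)}\le\boundparam\!\int(p-p^{(k)})^2\le C(d,\boundparam)\,N_k^{-2}$, which tends to $0$ as $k\to\infty$.

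\emph{The estimator and its regret.} I would take $\mathscr{E}$ to be the Bayes predictive distribution under a prior that first draws a resolution $k$ with $\pi(k)\propto 2^{-k}$ and then draws a density from $\mathcal{D}^{(k)}$ according to a fixed density on its parameter polytope bounded away from $0$ and $\infty$; equivalently, and matching the phrasing of the corollary, a sieve MLE over $\mathcal{D}^{(k(t))}$ at a slowly growing resolution $k(t)$ with $N_{k(t)}^d\log t=o(t)$, lightly smoothed toward the uniform law on $[-1,1]^d$ to keep it bounded below — the two analyses coincide in substance. The standard chain-rule / index-of-resolvability bound then gives, for every $k$ and every comparator $\bar q\in\mathcal{D}^{(k)}$,
\[
-\sum_{s=1}^t\log\hat q_{s-1}(X_s)\ \le\ -\log\pi(k)\ +\ C_k\log t\ -\ \sum_{s=1}^t\log\bar q(X_s),
\]
where $C_k<\infty$ depends on $k,d,\boundparam$ but not on $t$ (up to constants it is the sieve dimension $N_k^d$, arising from covering the parameter polytope at scale $1/t$), and — crucially — both $\hat q_{s-1}$ and $\bar q$ take values in $[1/\boundparam,\boundparam]$, so every log-ratio below is bounded in absolute value by $2\log\boundparam$ and no tail/moment issues intervene. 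Taking $\bar q=p^{(k)}$ and rearranging,
\[
\rho_t(\mathscr{E};p)\ =\ \sum_{s=1}^t\log\frac{p(X_s)}{p^{(k)}(X_s)}\ +\ \sum_{s=1}^t\log\frac{p^{(k)}(X_s)}{\hat q_{s-1}(X_s)}\ \le\ \sum_{s=1}^t\log\frac{p(X_s)}{p^{(k)}(X_s)}\ +\ C_k'\log t .
\]
The summand $\log(p(X_s)/p^{(k)}(X_s))$ is bounded, so by the strong law $t^{-1}\sum_{s\le t}\log(p(X_s)/p^{(k)}(X_s))\to\mathrm{KL}(p\|p^{(k)})$ $P^\infty$-a.s., while $C_k'\log t=o(t)$ for fixed $k$; hence $\limsup_t\rho_t/t\le\mathrm{KL}(p\|p^{(k)})\le C(d,\boundparam)N_k^{-2}$ $P^\infty$-a.s. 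Since $k$ is arbitrary, $\rho_t(\mathscr{E};p)=o(t)$ $P^\infty$-a.s., which is what the reduction required.

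\emph{Main obstacle.} The only genuinely delicate point is making the regret bound hold \emph{pathwise} (almost surely, not just in expectation) and \emph{uniformly} over $\mathcal{D}_{\mathrm{Box, Lip,\boundparam}}$ while respecting the normalization and boundedness constraints of $\mathcal{D}^{(k)}$ in the covering/Laplace estimate; this is exactly the regime where all log-ratios are bounded by $2\log\boundparam$, which is what lets the strong law and the resolvability bound go through without extra hypotheses. Everything else — the Lipschitz approximation estimate, the chain rule for the Bayes mixture (or the online-MLE regret analysis of its sieve-MLE variant), and the final $k\to\infty$ diagonalization — is routine.
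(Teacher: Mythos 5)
Your proposal is correct, but it takes a genuinely different route from the paper. The paper obtains Corollary~\ref{prop:consistency_box} as a one-line consequence of the quantitative machinery built for Corollary~\ref{thm:rate_for_box}: there one constructs a summable sequence $\pi_t$ with $P^\infty\bigl(\rho_t(\mathscr{E};p)/(t\,d_H^2(p,\lc)) > 1/25\bigr) \le \pi_t$, using the Wong--Shen convergence rates for sieve MLEs of bounded Lipschitz densities (Lemma~\ref{lem:kl_bound}), a union bound over times, and Azuma--Hoeffding; Borel--Cantelli then upgrades this to the almost-sure $\limsup$ statement. You instead give a self-contained online-prediction argument: histogram sieves with a Lipschitz approximation bound $\mathrm{KL}(p\|p^{(k)}) = O(N_k^{-2})$, a deterministic (pathwise) index-of-resolvability regret bound for the Bayes-mixture predictor, the strong law for the bounded i.i.d.\ summands $\log(p(X_s)/p^{(k)}(X_s))$, and a diagonalization over the resolution $k$. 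Your approach is more elementary and, notably, actually delivers the ``for every $c>0$'' clause of the statement directly, since you prove $\rho_t = o(t)$ a.s.\ and hence that the $\limsup$ is $0$; the paper's proof as written only establishes the threshold $c = 1/25$ (the constants would have to be retuned for smaller $c$). What the paper's route buys in exchange is explicit finite-time tail bounds, which are needed for the expected-rejection-time bound of Corollary~\ref{thm:rate_for_box} and which your qualitative argument does not produce. Two small points to tidy: the statement asks for sieve \emph{maximum likelihood} estimators while your primary construction is a Bayes predictive mixture --- you acknowledge this, but the claim that the smoothed sieve-MLE variant enjoys the same logarithmic regret deserves a reference or a short argument rather than ``coincide in substance''; and the degenerate case $p \in \lc$ (where $d_H^2(p,\lcproj_p)=0$ and the defining ratio of $\mathcal{Q}(\mathscr{E};c)$ is ill-posed) is a defect of the corollary's statement shared by the paper's own proof, so your handling of it is fine.
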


It is further interesting that the consistency of the test does not require that the regret $\rho_t/t \to 0,$ and only that it gets small enough relative to the Hellinger distance between $p$ and its log-concave M-projection $\lcproj_p$. This signals that deviations from log-concavity may be detected far before the underlying law can be estimated, which is quite favourable theoretically, although its practical effects depend significantly on how large a $c$ can be taken in Theorem~\ref{thm:consistency}. 

\begin{proof}[Proof of Theorem~\ref{thm:consistency}]
    We begin by defining $\sigma_t(p) = \sum_{s \le t} \log p(X_s) - \log \hat{p}_t(X_s).$ Observe that \[ \log R_t = \sigma_t(p) - \rho_t(\mathscr{E};P).\] Further, by assumption, we have that $p \in \mathcal{Q}(\mathscr{E}, c)$ for some $c,$ and thus for any $\zeta > 0,$ we have that \[ \rho_t \le (1 + \zeta) c t d^2_H(p, \lcproj_p),\] for large enough $t$.  Consequently, to show that $R_t \to \infty,$ it suffices to show that $P^\infty$ almost surely,
\begin{equation}\label{eq:claim} \liminf_{t \to \infty} \frac{\sigma_t}{t d_H^2(p, \lcproj_p)} \overset{}{\ge} (1 + 2\zeta) c. 
    \end{equation}
It is at this point that the following lemma is useful, the proof of which is left to \S\ref{appx:consistency_proof}. \begin{mylem}\label{lem:growth_of_sigma_t}
    For any $p \in \mathcal{D}_1,$ it holds that \[P^\infty\left( \liminf_{t \to \infty} \frac{\sigma_t(P)}{t d_H^2(P,\lcproj_P)} \ge \frac{1}{25} \right) =1.\]
\end{mylem}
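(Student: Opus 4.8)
The plan is to lower-bound $\sigma_t(P) = \sum_{s\le t}\big(\log p(X_s) - \log\hat p_t(X_s)\big)$ by comparing the log-concave MLE $\hat p_t$ against the true log-concave M-projection $\lcproj_P$, and then invoking the standard theory that relates excess log-loss to a Hellinger-type distance. First I would write $\sigma_t(P) = \sum_{s\le t}\log\frac{p(X_s)}{\lcproj_p(X_s)} + \sum_{s\le t}\log\frac{\lcproj_p(X_s)}{\hat p_t(X_s)}$. By the strong law of large numbers the first sum is $t\,\mathrm{KL}(P\|\lcproj_P) + o(t)$ almost surely (finiteness of the relevant expectation is guaranteed by $P\in\mathcal D_1$ together with the properties of the M-projection). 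The second sum is, up to sign, exactly the difference in empirical log-likelihood between the projection $\lcproj_p$ and the MLE $\hat p_t$; since $\hat p_t$ maximizes the empirical log-likelihood over $\lc$ and $\lcproj_p \in \lc$, this sum is $\le 0$, so it works against us and must be controlled from below.

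The key step is therefore to show that $\frac1t\sum_{s\le t}\log\frac{\hat p_t(X_s)}{\lcproj_p(X_s)}$ cannot be too large --- more precisely, that it is bounded above by something like $c' d_H^2(p,\lcproj_p) + o(1)$ for a small constant, or ideally by $o(1)$ plus a term absorbed into the $\frac1{25}$. For this I would use the almost-sure convergence $\hat p_t \to \lcproj_p$ in the strong sense quoted from Cule--Samworth--Stewart (namely $\int e^{a\|x\|}|\hat p_t(x)-\lcproj_p(x)|\,dx \to 0$ a.s. for some $a>0$), which controls the population quantity $\E_P[\log\hat p_t(X)/\lcproj_p(X)]$, combined with a uniform law of large numbers / bracketing argument over the log-concave class to replace the empirical average $\frac1t\sum\log(\hat p_t/\lcproj_p)$ by its population counterpart uniformly. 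The classical information inequality then gives $\E_P[\log(\hat p_t/\lcproj_p)(X)] = \mathrm{KL}(P\|\lcproj_P) - \mathrm{KL}(P\|\hat P_t) \le \mathrm{KL}(P\|\lcproj_P)$, and more carefully one wants the localized bound expressing the gap in terms of $d_H^2(\hat p_t, \lcproj_p) \to 0$. Putting the two sums together yields $\liminf_t \sigma_t(P)/t \ge \mathrm{KL}(P\|\lcproj_P) - o(1)$ a.s., and then I would invoke a reverse-type comparison (e.g. that for the M-projection $\mathrm{KL}(P\|\lcproj_P) \ge c_0\, d_H^2(p,\lcproj_p)$ for an explicit $c_0$, which for log-concave projections follows from the structure of the projection and the standard inequality $\mathrm{KL}\ge 2 d_H^2$ together with any slack needed to reach the constant $1/25$) to conclude $\liminf_t \sigma_t(P)/(t\,d_H^2(p,\lcproj_p)) \ge 1/25$.

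The main obstacle is the second sum: controlling $\sum_{s\le t}\log(\hat p_t/\lcproj_p)(X_s)$ requires handling the fact that $\hat p_t$ itself depends on the same data $X_1^t$ used in the average, so a naive SLLN does not apply --- this is where one genuinely needs the uniform (over $\lc$, or over a suitable $e^{a\|x\|}$-weighted neighborhood of $\lcproj_p$) convergence, and where the precise numerical constant $1/25$ must be extracted by being careful about the slack in each inequality (the gap between $\mathrm{KL}$ and $2 d_H^2$, the approximation errors in the uniform LLN, and the $o(1)$ terms). A secondary technical point is ensuring all the relevant expectations are finite and the a.s. statements hold simultaneously, which the hypothesis $P\in\mathcal D_1$ and the cited convergence results are designed to handle; I would isolate these integrability checks into a short preliminary step before the main estimate.
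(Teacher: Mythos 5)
Your high-level ingredients are the right ones (the Cule--Samworth/Dunn et al.\ localization of $\hat p_t$ near $\lcproj_p$, and a bracketing-based empirical-process bound), and you correctly identify the data-dependence of $\hat p_t$ as the crux. But the specific decomposition $\sigma_t = \sum\log(p/\lcproj_p) - \sum\log(\hat p_t/\lcproj_p)$ creates a gap at exactly that crux. To finish you must upper-bound the empirical overfitting term $\frac1t\sum_{s\le t}\log(\hat p_t(X_s)/\lcproj_p(X_s))$, which is nonnegative by the MLE property, by $o(1)$ (or by a small multiple of $d_H^2(p,\lcproj_p)$). A uniform LLN over all of $\lc$ is unavailable: the class has infinite Hellinger bracketing entropy without boundedness/compact-support restrictions, and the log-ratios have no integrable envelope. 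More importantly, even after localizing to a small Hellinger bracket $[u^*,v^*]$ around $\lcproj_p$, the quantity you need to control is $\sup_{q\in[u^*,v^*]}\E_P[\log(q/\lcproj_p)]$ and its empirical counterpart --- an \emph{off-the-model} object: since the data come from $P\neq\lcproj_P$, Hellinger closeness of $q$ to $\lcproj_p$ does not control $\int p\,\log(q/\lcproj_p)$ (the ratio $q/\lcproj_p$ can be bounded away from $1$ on a region of large $P$-mass while the bracket remains tiny in Hellinger). This is precisely the difficulty the paper flags in its discussion of why on-the-model analyses of the log-concave MLE do not transfer, and your ``classical information inequality'' step only bounds the population version with $\hat p_t$ frozen, not the empirical sum with the data reused.

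The paper's proof avoids the detour through $\lcproj_p$ entirely. It takes the bracket $[u^*,v^*]$ of Hellinger size $\epsilon^2/2^{11}$ that eventually contains $\hat p_t$ (Dunn et al., building on Cule--Samworth), notes that this single bracket has zero bracketing entropy at the relevant scale and is Hellinger distance at least $\epsilon\sqrt{24/25}$ from $P$ itself (triangle inequality), and then applies the one-sided Wong--Shen inequality \emph{relative to $p$}: with probability at least $1-4e^{-Ct\epsilon^2}$, $\inf_{q\in[u^*,v^*]}\sum_{s\le t}\log(p(X_s)/q(X_s))\ge t\epsilon^2/25$. On the event $\hat p_t\in[u^*,v^*]$ this lower-bounds $\sigma_t$ directly, and Borel--Cantelli finishes. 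Wong--Shen works off-model because it only uses the Hellinger separation of the class from the data-generating $p$ (via Markov applied to $\prod\sqrt{q(X_s)/p(X_s)}$), which is exactly the one-sided, unbounded-log-ratio-tolerant tool your plan is missing. If you restructure your argument to bound $\inf_{q\in[u^*,v^*]}\sum\log(p/q)$ directly rather than passing through $\mathrm{KL}(P\|\lcproj_P)$ and the overfitting term, the proof goes through; as written, the key step would fail.
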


The claim~\eqref{eq:claim} thus follows so long as $(1 + 2\zeta) c \le 1/25,$ and since $\zeta > 0$ can be taken arbitrarily small, this allows us to take any $c < 1/25.$ We note that the constants in this argument are loose, and informal calculations suggest that it may be possible to improve $c$ up to about $1/6$. 

The proof of Lemma~\ref{lem:growth_of_sigma_t} 
relies on strong convergence properties of the log-concave MLE $\hat{p}_t$ to the log-concave M-projection $\lcproj_p$. Recall that for a pair of functions $u \le v,$ a bracket $[u,v]$ is the set of all functions that lie between $u$ and $v$ everywhere. By exploiting a characterisation of the convergence properties of log-concave MLEs due to Cule and Samworth \cite{cule2010theoretical}, Dunn et al.~\cite[Lem.~1]{dunn2021-logConcaveTesting} show that there is a small bracket that is well separated from $P$ such that $\hat{p}_t$ eventually lies in this bracket. Conditioning on this event, we then exploit a classical result of Wong and Shen \cite{wong1995probability} which show linear growth of $\sigma_t(p)$ with condiitonal probability at least $1 - \exp{-\Omega(t)},$ at which point the lemma follows by Borel-Cantelli. As mentioned before, see \S\ref{appx:consistency_proof} for the full proof of Lemma~\ref{lem:growth_of_sigma_t}.
\end{proof}

\subsection{Power of the ULR E-Process for Testing Log-Concavity}

The argument underlying Theorem~\ref{thm:consistency} is also amenable to deriving rates, under further restrictions on the underlying law $P$. As in the previous section, we argue this using the decomposition $\log R_t = \sigma_t(p) - \rho_t(\mathscr{E};p).$ 

\subsubsection{Challenges, and Context from the Theory of Log-Concave MLEs}

With the above approach, the argument breaks into two parts. Frstly, we assume that we use a good enough estimator $\mathscr{E}$ so that $\rho_t$ is not too large with high probability. Such an assumption is necessary for the approach we take, although in principle the test can be analysed using a different decomposition, in which case this assumption may perhaps be weakened. In any case, we observe that for concrete alternate hypotheses such as laws with Lipschitz densities supported on the unit hypercube, $\rho_t$ can indeed be appropriately controlled. It is worth noting, however, that the resulting rate bounds are strongly driven by the behaviour of $\rho_t,$ and thus the estimator being considered, which limits the power of the results to follow.

The second part of the argument requires us to show that $\sigma_t$ is large, i.e., to argue that the log-concave MLE cannot represent the underlying law very well when it is not log-concave. While a natural statement, arguing this is challenging because this requires us to understand the behaviour of log-concave MLEs `off-the-model,' i.e., when the data is not drawn from a log-concave distribution itself. With the notable exception of Barber and Samworth \cite{barber2021local}, this task has not been undertaken in the literature, with most works focusing on on-the-model minimax rate bounds \cite{kim2016global, kur2019optimality, han2021set,carpenter2018near}. Let us consider this in some detail.

Tight analysis of the on-the-model log-concave estimation problem fundamentally relies on a subtle  reduction of the rates of log-concave MLEs to the problem of controlling deviations of empirical processes over convex sets, i.e., to that of controlling $\sup_C |P(C) - P_t(C)|$ under data drawn from $P$, where $P_t$ is the empirical law, and the supremum is over convex sets in a bounded domain \cite{carpenter2018near}. Using this observation and a refined study of these deviations, Kur et al.~\cite{kur2019optimality} recently showed tight on-the-model estimation rates of the form $d_H(\hat{p}_t, p) = O(n^{-1/(d+1)})$ when $p \in \lc$ and $d \ge 3$ (Han showed similar results, along with extensions to $s$-concave densities \cite{han2021set}). 
While significant elements of this study can be extended to analysing off-the-model behavior, the analysis ultimately cannot be applied to our situation. The gap arises because their argument only upper bounds the  quantity \[ \theta_t := \mathbb{E}_{X \sim P}[\log \lcproj_p(X)] - \mathbb{E}_{X \sim P}[ \log \tilde{p}_t(X)],\] where for a small constant $c$, $\tilde{p}_t \propto \max(c,p_t)$ is a slight modification the log-concave MLE. When $p = \lcproj_p,$ this object is a KL-divergence, and so is lower bounded. However, when $p \neq \lcproj_p$ (that is, $P$ is not log-concave), this quantity is may well be negative. Notice that this is a problem for us precisely because $\mathbb{E}[\sigma_t]/t \approx \theta_t + \mathbb{E}_{X \sim p}[\log p(X) - \log \lcproj_p(X)].$ When $p \not\in \lc,$ the second term can indeed be shown to be large, but the lack of a lower bound on the first term limits the applicability of such results. We also note that other aspects of the argument, which are relatively simple in on-the-model analysis (for instance, arguing that the mass $p$ places on sets of the form $\{\lcproj_p(x) < \gamma\}$ is small), are also rendered inoperative in off-the-model analysis.

Of course, we can in principle exploit the results of Barber and Samworth instead. However, these results give quite poor rates. Roughly speaking, Theorem 5 of their paper \cite{barber2021local} shows that off-the-model, $d_H(\hat{p}_t, \lcproj_p) \lesssim t^{-1/4d},$ and thus any analysis that exploits this result cannot hope to show that $\sigma_t$ is large for $t \ll d_H(p,\lc)^{-4d}$. This power of $4d$ arises since the analysis of \cite{barber2021local} passes through a reduction to convergence of empirical laws in Wasserstein distance (which gives the relatively benign factor of $d$), and further suffers a $1/4$th power slowdown relative to this convergence (which is both unavoidable, and leads to a $4d$ exponent). 

Our analysis sidesteps these issues by controlling the growth of $\sigma_t$ on the basis of bracketing entropy (see \S\ref{appx:consistency_proof}) bounds for the class of bounded log-concave laws on compact supports. Our bound below holds for all $d$ but appears to be new for $d \ge 4$. Indeed, we show the following statement.

\begin{mylem}\label{lem:lc_entropy_bound}
    Let $\lc_{d,\boundparam}$ denote the set of laws with log-concave densities that supported on $[-1,1]^d$ and uniformly upper bounded by a constant $\boundparam$. There exists a constant $C_d$ dependending only on the dimension such that %
    \[ \mathcal{H}_{[]}( \lc_{d,\boundparam}, \zeta) = C_d \widetilde{\Theta}\left( (B/\zeta)^{\max(d/2, d-1)} \right),\] where the $\widetilde{\Theta}$ hides terms that scale polylogarithmically with $\zeta$ or $\boundparam$. 
\end{mylem}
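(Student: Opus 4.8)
The plan is to pass from densities to their concave log-densities and then bracket those by splitting an arbitrary $f = e^g \in \lc_{d,\boundparam}$ into three pieces: a \emph{tail} where $f$ is negligibly small, a \emph{core} on which $g$ is bounded and uniformly Lipschitz (handled by a Bronshtein-type entropy bound, yielding the $(\boundparam/\zeta)^{d/2}$ contribution), and a thin \emph{boundary layer} around the edge of the support on which $g$ is, up to the bracketing tolerance, a piecewise-affine ``ramp'' (yielding the $(\boundparam/\zeta)^{d-1}$ contribution). Here $g$ is concave on its support $\Omega_f \subseteq [-1,1]^d$, a convex body, with $g \le \log\boundparam$ everywhere and $\max g \ge -d\log 2$ since $1 = \int_{\Omega_f} f \le 2^d \max f$. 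As the Hellinger distance of a bracket $[f_L, f_U]$ is $\|\sqrt{f_U} - \sqrt{f_L}\|_2$, it suffices to cover the square roots $\sqrt f = e^{g/2}$ in $L_2$, and on a region where $f$ is pinned to an interval $[\delta,\boundparam]$ one has $|e^{g/2} - e^{g'/2}| \le \tfrac12 \sqrt{\boundparam}\,|g - g'|$, so a sup-norm $\epsilon$-net of $g$ there induces an $L_2$ bracket of width $\lesssim \sqrt{\boundparam}\,\epsilon$.

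For the upper bound I would fix $\delta \asymp \zeta^2 2^{-d}$; on $\{f < \delta\}$ the trivial bracket $[0,\delta]$ costs $\le 2^d \delta \lesssim \zeta^2$ in squared Hellinger and needs no discretisation. On the convex superlevel set $\Omega_\delta := \{f \ge \delta\}$, where $g \in [\log\delta, \log\boundparam]$, I would first lay down a Hausdorff $\eta$-net, $\eta \asymp \zeta/\boundparam$, over the convex bodies in $[-1,1]^d$ (of log-cardinality $\asymp \eta^{-(d-1)/2}$) and fix a representative $\Omega$. On the inward offset $\Omega^{(r)} := \{x \in \Omega : \mathrm{dist}(x, \partial\Omega) \ge r\}$, with $r$ taken to be a mild ($\mathrm{polylog}$-order) function of $\boundparam/\zeta$: since moving a distance $r$ from such an $x$ keeps us inside $\Omega_\delta$ where $g \ge \log\delta$, concavity forces $\|\nabla g(x)\| \le \log(\boundparam/\delta)/r$, so $g$ is bounded with uniformly bounded Lipschitz constant and Bronshtein's theorem gives a sup-norm $\epsilon$-net of log-cardinality $\lesssim (\log(\boundparam/\delta)/(r\epsilon))^{d/2}$; taking $\epsilon \asymp \zeta/\sqrt{\boundparam}$ makes this $\lesssim (\boundparam/\zeta)^{d/2}$ up to polylogarithmic factors. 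The remaining boundary layer $\Omega_\delta \setminus \Omega^{(r)}$ has Lebesgue volume $O_d(r)$ (the surface area of a convex body in the unit box is $O_d(1)$), but $f$ is still of order $\boundparam$ there, so it cannot be thrown away; instead I would exploit that $g$, being concave and squeezed between $\log\delta$ and $\log\boundparam$ across a width-$r$ collar, is within Hellinger $\zeta$ of a piecewise-affine function whose affine pieces are indexed by a $(\zeta/\boundparam)$-net of the $(d-1)$-dimensional boundary of the support together with $O(1)$ choices of supporting direction and slope --- giving log-cardinality $\lesssim (\boundparam/\zeta)^{d-1}$. Multiplying the three contributions gives $\mathcal{H}_{[]}(\lc_{d,\boundparam}, \zeta) \lesssim C_d (\boundparam/\zeta)^{\max(d/2,\, d-1)}$ up to polylogs, and the matching lower bound I would obtain from the two standard packings: perturbing a fixed smooth log-concave density by $\asymp (\boundparam/\zeta)^{d/2}$ disjointly supported concave bumps on a grid of side $\asymp (\zeta/\boundparam)^{1/2}$ for the $d/2$ exponent, and by $\asymp (\boundparam/\zeta)^{d-1}$ ridge/half-space perturbations localised near the support boundary for the $d-1$ exponent, followed by Varshamov--Gilbert.

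I expect the boundary layer to be the main obstacle. The tension is that the collar width $r$ cannot be shrunk all the way to $\zeta^2/\boundparam$ (which would make its crude $[0,\boundparam]$-bracket cheap), because that would blow up the Lipschitz constant $\log(\boundparam/\delta)/r$ and hence the Bronshtein factor on the core well past $(\boundparam/\zeta)^{d/2}$; one is therefore forced to genuinely resolve the nearly-affine behaviour of $g$ along the $(d-1)$-dimensional boundary of $\Omega_\delta$, which is precisely where the $d-1$ exponent --- and the novelty relative to the $d \le 3$ analyses of Kim and Samworth \cite{kim2016global} --- comes from. Carrying this out rigorously requires care in (i) converting a sup-norm/affine cover of a concave function into honest two-sided brackets on a non-convex collar region, (ii) controlling how the Hausdorff net over supports interacts with the inward offset $\Omega^{(r)}$, and (iii) simultaneously balancing $(\delta, r, \eta, \epsilon)$ so that each of the three regions contributes at most the target order; the book-keeping, rather than any single estimate, is the crux.
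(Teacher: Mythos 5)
Your overall architecture tracks the paper's proof in outline: threshold at a low superlevel set, net the convex supports via Bronshtein's bracketing of convex bodies, bracket the concave log-density on the inner set, and convert to Hellinger brackets using the Lipschitz property of $z \mapsto e^{z/2}$ on $(-\infty, \log \boundparam]$. The decisive difference is how the log-density is bracketed. The paper invokes a single black-box result of Gao and Wellner: the $L_2(K)$ bracketing entropy of convex functions \emph{bounded} by $\boundparam'$ on a convex body $K \subset [-1,1]^d$ is $O((\boundparam'/\zeta)^{d-1})$, with no Lipschitz hypothesis. Applied to $-\log f$ on the inner Bronshtein set (where $-\log f$ ranges over an interval of length $\log(\boundparam/\zeta^3)$), this immediately yields the $\widetilde{O}((\boundparam/\zeta)^{d-1})$ count for $d \ge 4$; the $d \le 3$ upper bound and all lower bounds are simply cited from Kim and Samworth, so no packing construction is needed. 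You instead try to re-derive the content of the Gao--Wellner bound from scratch via a Lipschitz core (inward offset, Bronshtein's sup-norm entropy for Lipschitz convex functions) plus a boundary collar.

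The gap is precisely the collar. Your claim that a concave $g$ squeezed between $\log\delta$ and $\log\boundparam$ on a width-$r$ boundary layer is within Hellinger $\zeta$ of a piecewise-affine function indexed by a $(\zeta/\boundparam)$-net of the boundary with ``$O(1)$ choices of supporting direction and slope'' is asserted, not proved, and it is not a bookkeeping issue: it is the entire content of the $(\boundparam/\zeta)^{d-1}$ entropy bound for bounded-but-not-Lipschitz convex functions. Near $\partial\Omega_\delta$ the gradient of $g$ is only controlled by $\log(\boundparam/\delta)/\mathrm{dist}(x,\partial\Omega)$, which is unbounded as one approaches the boundary, so resolving $g$ to accuracy $\epsilon$ there requires a number of slope choices that grows as the collar thins; showing that the total cost nevertheless stays at $\widetilde{O}((\boundparam/\zeta)^{d-1})$ is a genuine theorem (Gao--Wellner, Thm.~1.5), not a routine net argument. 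There is also the unaddressed interaction you flag yourself: the collar is defined relative to the $f$-dependent level set $\Omega_\delta$, while your net is over supports, and concavity of $g$ on $\Omega_f$ does not directly give you uniform control on an arbitrary representative $\Omega$ of the Hausdorff net. If you simply replace your core-plus-collar construction by a citation to the Gao--Wellner $L_2$ bracketing bound for bounded convex functions (and cite Kim--Samworth for $d\le 3$ and for the lower bound, rendering your packing constructions unnecessary), your argument collapses to essentially the paper's proof and goes through.
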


We note that the lower bounds on the bracketing entropy implicit in the statement of Lemma~\ref{lem:lc_entropy_bound} were already shown by Kim \& Samworth \cite[Thm.~8]{kim2016global}, who further also showed the corresponding upper bounds for $d \le 3.$ 
While not the central point of the paper, we develop upper bounds for the same when $d \ge 4$. 
There are two salient technical points regarding the bound above. Firstly, observe that for $d \ge 4,$ the entropic bounds lie in the non-Donsker regime, i.e., when the Dudley integral $\int_0^{\varepsilon} \sqrt{\mathcal{H}_{[]}(\lc_{d,\boundparam}, \zeta} \mathrm{d}\zeta$ does not converge due to a blow-up near $\zeta = 0$, which typically (but not always) represents a slowdown in the convergence rates that can be shown via entropy integrals. Secondly, for $d \ge 3,$ the bound grows as $\zeta^{-(d-1)}$ rather than as $\zeta^{-d/2}.$ The latter quantity is pertinent it is close to the growth rate of the bracketing entropy of convex sets which is $\zeta^{-(d-1)/2}$; see \S\ref{appx:lc_entropy_bound_proof}. This fact underlies the power of the previously discussed reduction of the analysis of log-concave MLE rates to control on the deviations of empirical processes over convex sets, which admit a slower entropy growth. %

Lemma~\ref{lem:lc_entropy_bound} is proved in \S\ref{appx:lc_entropy_bound_proof}. The growth bounds of this result ensure that for $t \gtrsim d_H(p,\lc)^{2(d-1)},$ $\sigma_t$ is linearly large in $t,$ even when the underlying law is not log-concave. This $2(d-1)$ exponent should be compared to the aforementioned $4d$th power scaling that one expects to emerge from using the Wasserstein continuity based approach discussed above.  Of course, the dependence on $d$ could potentially be improved even further. For instance, if the on-the-model analysis can indeed be extended to off-the-model, it is plausible to expect dependence of the form $d+1$ instead of $2d - 2.$ However, this remains a challenging problem for future work.

It is worth noting that while the techniques for the bounds in Lemma~\ref{lem:lc_entropy_bound} exist in the  literature, the bounds themselves appear to not have explicitly been observed. We believe that this might be because it was previously observed that due to existing lower bounds on this entropy, the resulting growth rate bounds that emerged from entropic considerations could not be optimal for the rate analysis of the log-concave MLE, at least in on-the-model settings. It should also be noted that the bounds above are explicitly for compactly supported log-concave laws  (which is a restriction, but a relatively mild one, due to the exponentially decaying tail enjoyed by all log-concave densities). Further note that that the brackets we construct for this setting are `improper', i.e. the bracketing functions are themselves not log-concave, which may limit utility in direct analysis of the difference between $\hat{p}_t$ and $\lcproj_p,$ but is good enough when studying the behaviour of $\sigma_t$.

\subsubsection{Bounding Typical Rejection Times for the ULR Test}

As discussed previously, our analysis of $\sigma_t$ passes through a bracketing entropy bound for bounded, compactly supported log-concave laws. For such bounds to be effective, we need to ensure that the log-concave MLE $\hat{p}_t$ itself is bounded. This is enabled by the quantity $\Delta_P$, defined for a law $P$ as \[ \Delta_P := \min_{v:\|v\| = 1} \mathbb{E}_P[ |\langle v, X - \mathbb{E}_P[X]\rangle| ], \] 
which was identified by Barber and Samworth \cite{barber2021local} as a means to lower bound the covariance of the log-concave projection of $P$, which in turn can be exploited to upper bound the supremum of $\lcproj_p$ and (indirectly) $\hat{p}_t$. Observe that $\Delta_P$ roughly corresponds to the minimum eigenvalue of the covariance matrix of $P$ --- indeed, it is best seen as a robust version of the same.  

With this in hand, we are ready to state our main result, the proof of which is the subject of \S\ref{appx:rate_bound_proof}. Recall that $d_H(p, \lc) = \inf_{q \in \lc} d_H(p,q)$ and $\tau_\alpha := \inf\{t: R_t \ge 1/\alpha\}$ is the rejection time.

\begin{myth}\label{thm:rate}

    Suppose $p$ is supported on $[-1,1]^d,$ and let $\pi_t \to 0$ be a sequence such that for every $t$, \[ P^\infty \left(  \frac{\rho_t(\mathscr{E}, p)}{t d_H^2(p, \lc )} \ge \frac{1}{25} \right) \le \pi_t. \]
    Then there exists a constant $c \in [1/600,1]$ and a natural number $T_0$ such that for any $t \ge T_0 + \frac{\log(1/\alpha)}{c d_H^2(p, \lc)},$ 
    \[ P^\infty\left( \tau_\alpha > t  \right) \le \pi_t + \frac{1}{c}\exp{-ct  d_H^2(p,\lc)} + \frac{1}{c} \exp{- ct\Delta_P^2/d^2}, \] and  \[ T_0 = C_d \cdot \widetilde{O}\left(\Delta_P^{-\max(d^2/2, d^2-d)} d_H(p, \lc)^{-\max((d+4)/2,2(d-1))} + d^2 \Delta_P^{-2}\right), \] where $C_d$ depends only on $d$, and the $\widetilde{O}$ hides terms depending polylogarithmically on $d_H(p,\lc), \Delta_P$.
\end{myth}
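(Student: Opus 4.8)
The plan is to mirror the argument used to prove Theorem~\ref{thm:consistency}, but to track everything quantitatively. We start from the identity $\log R_t = \sigma_t(p) - \rho_t(\mathscr{E};p)$, and observe that the event $\{\tau_\alpha > t\}$ is contained in $\{R_t < 1/\alpha\} = \{\sigma_t(p) - \rho_t(\mathscr{E};p) < \log(1/\alpha)\}$ (using that $R_t$ is not necessarily monotone, so we must be careful: we actually want $\{\tau_\alpha > t\} \subset \{R_s < 1/\alpha \ \forall s \le t\} \subset \{R_t < 1/\alpha\}$). By the union bound it then suffices to show that, except on an event of probability at most $\pi_t + \frac1c\exp\{-ctd_H^2(p,\lc)\} + \frac1c\exp\{-ct\Delta_P^2/d^2\}$, we have $\sigma_t(p) \ge \frac{1}{25} t d_H^2(p,\lc) \cdot(1+\text{slack})$ while $\rho_t(\mathscr{E};p) \le \frac{1}{25}td_H^2(p,\lc)$, and that the gap $\sigma_t - \rho_t$ exceeds $\log(1/\alpha)$ once $t \ge T_0 + \log(1/\alpha)/(cd_H^2(p,\lc))$. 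The $\rho_t$ half is handled directly by the hypothesis on $\pi_t$. Note $d_H^2(p,\lc) = d_H^2(p,\lcproj_p)$ since the log-concave M-projection minimizes KL, hence (via the standard $d_H^2 \le \mathrm{KL}$ and a matching lower bound argument, or simply by the fact that Dunn et al.\ identify $\lcproj_p$ as Hellinger-close to $\lc$) these are comparable; I would be careful to state whichever inequality I actually use.

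The crux is therefore a high-probability, finite-time lower bound on $\sigma_t(p) = \sum_{s\le t}\log p(X_s) - \log\hat p_t(X_s)$. This is the finite-sample version of Lemma~\ref{lem:growth_of_sigma_t}. First I would invoke the Barber--Samworth quantity $\Delta_P$: when $p$ is supported on $[-1,1]^d$, a lower bound on $\Delta_P$ yields a lower bound on the smallest eigenvalue of the covariance of $\lcproj_P$, which in turn bounds $\sup \lcproj_p$ from above by a quantity like $\boundparam = \widetilde{O}(\Delta_P^{-d/2})$ up to dimension-dependent constants; one then shows (via the strong convergence of $\hat p_t \to \lcproj_p$, quantified) that with probability $\ge 1 - \frac1c\exp\{-ct\Delta_P^2/d^2\}$, for $t$ exceeding a threshold of order $d^2\Delta_P^{-2}$, the MLE $\hat p_t$ is also uniformly bounded by $\boundparam$ and supported essentially within a fixed compact box. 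This is where the term $d^2\Delta_P^{-2}$ in $T_0$ comes from, and the concentration of the empirical mean/covariance (standard sub-exponential bounds for bounded-support laws) gives the $\exp\{-ct\Delta_P^2/d^2\}$ tail. Conditioned on this good event, $\hat p_t$ lives in the class $\lc_{d,\boundparam}$ of Lemma~\ref{lem:lc_entropy_bound}, whose bracketing entropy is $\mathcal{H}_{[]}(\lc_{d,\boundparam},\zeta) = C_d\widetilde\Theta((\boundparam/\zeta)^{\max(d/2,d-1)})$.

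Now I would apply the classical empirical-process machinery of Wong and Shen \cite{wong1995probability} (as in the proof of Lemma~\ref{lem:growth_of_sigma_t}): using the bracketing entropy bound, there is a bracket well-separated from $p$ in Hellinger distance such that $\hat p_t$ eventually lies in it, and on that event $\sigma_t(p) \ge c' t d_H^2(p,\lcproj_p)$ with conditional probability $\ge 1 - \exp\{-\Omega(t d_H^2(p,\lc))\}$ — \emph{provided} $t$ is large enough that the Wong--Shen "entry condition" $\sqrt{t}\, d_H(p,\lc)^2 \gtrsim \int_{c_1 d_H^2}^{c_2 d_H} \sqrt{\mathcal H_{[]}(\lc_{d,\boundparam},u)}\,\dd u$ is met. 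Plugging in the entropy rate $\zeta^{-\max(d/2,d-1)}$ and computing the Dudley-type integral (which for $d\ge 4$ is dominated by its lower endpoint $\zeta \asymp d_H^2$) yields precisely the threshold $t \gtrsim \boundparam^{\max(d/2,d-1)} d_H(p,\lc)^{-\max((d+4)/2,\,2(d-1))}$; substituting $\boundparam \asymp \Delta_P^{-d/2}$ gives the $\Delta_P^{-\max(d^2/2,d^2-d)}$ factor in $T_0$. Collecting: for $t \ge T_0$ the good events hold except with probability $\frac1c\exp\{-ctd_H^2(p,\lc)\} + \frac1c\exp\{-ct\Delta_P^2/d^2\}$, on which $\sigma_t \ge c'' t d_H^2(p,\lc)$; adding the $\pi_t$ term for the $\rho_t$ bound and requiring $c'' t d_H^2(p,\lc) - \frac1{25}td_H^2(p,\lc) \ge \log(1/\alpha)$, i.e.\ $t \ge T_0 + \log(1/\alpha)/(cd_H^2(p,\lc))$, finishes the proof.

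The main obstacle is the off-the-model control tying $\hat p_t$ to $\lc_{d,\boundparam}$ with the right finite-time threshold: one must carefully chain the Barber--Samworth covariance lower bound, the resulting sup-norm and support bound on $\lcproj_p$, the \emph{quantitative} strong convergence $\hat p_t \to \lcproj_p$ (the $\int e^{a\|x\|}|\hat p_t - \lcproj_p| \to 0$ statement needs a rate), and the bracketing-entropy-based Wong--Shen bound, making sure the various dimension-dependent constants, the polylogarithmic factors, and the $\max$'s in the exponents all line up as claimed. A secondary subtlety is that $R_t$ is not monotone, so "$\tau_\alpha > t$" must be related to $R_t$ at the single time $t$ (which is fine since $\{\tau_\alpha>t\}\subset\{R_t<1/\alpha\}$), and that the Hellinger distance to $\lc$ versus to the M-projection $\lcproj_p$ must be reconciled. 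I expect the bulk of the work, and essentially all of the novelty beyond Theorem~\ref{thm:consistency}, to be in propagating Lemma~\ref{lem:lc_entropy_bound} through the Wong--Shen integral to extract the explicit $T_0$.
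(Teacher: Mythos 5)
Your proposal follows essentially the same route as the paper's proof: decompose $\log R_t = \sigma_t - \rho_t$, handle $\rho_t$ by the hypothesis on $\pi_t$, bound $\sup \hat p_t$ via the Barber--Samworth quantity $\Delta_P$ (with the empirical concentration giving the $\exp{-ct\Delta_P^2/d^2}$ tail and the $d^2\Delta_P^{-2}$ burn-in), and feed the bracketing entropy of $\lc_{d,\boundparam}$ into the Wong--Shen fixed-point condition to extract $T_1$ and the $\exp{-ctd_H^2(p,\lc)}$ tail. One arithmetic slip to fix: the covariance lower bound $\mathrm{Cov}(\hat p_t)\succeq c_d\Delta_P^2 I$ yields $\sup\hat p_t \lesssim (\Delta_P^2)^{-d/2}=C_d\Delta_P^{-d}$, not $\Delta_P^{-d/2}$; with your value of $\boundparam$ the substitution would produce $\Delta_P^{-d(d-1)/2}$ rather than the claimed $\Delta_P^{-(d^2-d)}$, whereas $\boundparam\asymp\Delta_P^{-d}$ makes the exponents line up (and note the paper never needs a quantitative rate for $\hat p_t\to\lcproj_p$ --- it uses $\hat p_t=\lcproj_{P_t}$ and concentration of the empirical $\Delta$ directly).
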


Observe that from the statement above we may conclude that the average rejection time is bounded as \[ \mathbb{E}[\tau_\alpha] = \sum_t P^\infty(\tau_\alpha > t) \le  \sum_t \pi_t + O(T_0).\] Here, the first term is driven by the predictability of $p$ using the estimators $\mathscr{E},$ while the second term is driven by our analysis of the noise scale of log-concave density estimation in off-the-model scenarios. 

In typical situations, the former of these terms will dominate the resulting bounds, since typical alternate classes will be much larger than the class of log-concave distributions. For instance, using results on the estimation of uniformly lower-bounded Lipschitz densities \cite[e.g.]{wong1995probability}, we show the following result  about the set 
$\mathcal{D}_{\mathrm{Box,Lip,\boundparam}}$ introduced in Corollary~\ref{prop:consistency_box}.

\begin{corr}\label{thm:rate_for_box}
For any constant $\boundparam > 0$, there exists a sequence of sieve maximum likelihood estimators $\mathscr{E}$ such that if $p \in \mathcal{D}_{\mathrm{Box,Lip,\boundparam}},$ the ULRT rejection time $\tau_\alpha$ is bounded in expectation as $\mathbb{E}[\tau_\alpha] = \widetilde{O}( d_H(p, \lc)^{-2(d+3)}).$
\end{corr}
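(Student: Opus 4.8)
The plan is to derive Corollary~\ref{thm:rate_for_box} by plugging a concrete sieve estimator into Theorem~\ref{thm:rate} and then integrating the resulting rejection-time tail bound over $t$. First I would fix the estimator: take $\mathscr{E}$ to be exactly the sequential sieve maximum likelihood estimator used in the proof of Corollary~\ref{prop:consistency_box}, built over the class of $1$-Lipschitz densities on $[-1,1]^d$ bounded between $1/B$ and $B$, with the standard clipping that keeps each $\hat q_t$ bounded away from $0$. Every $p \in \mathcal{D}_{\mathrm{Box,Lip},B}$ then has: (i) compact support, so $p$ is supported on $[-1,1]^d$ as Theorem~\ref{thm:rate} requires; (ii) $p \in \mathcal{D}_1$, since $\E[\max(0,\log p(X))] \le \log B$ and $\E[\|X\|] \le \sqrt{d}$; (iii) $\Delta_P \ge \Delta(B,d) > 0$, because $p \ge 1/B$ on the box forces the covariance of $P$ to be uniformly nondegenerate; and (iv) a uniformly bounded density. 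Hence, in the conclusion of Theorem~\ref{thm:rate}, every quantity other than $d_H(p,\lc)$ is a constant depending only on $B,d$ and disappears into $\widetilde{O}(\cdot)$. I would also dispatch the degenerate case $p \in \lc$ at the outset: then $d_H(p,\lc) = 0$ and the claimed bound is vacuous.

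The second step is to upgrade the qualitative membership $\mathcal{D}_{\mathrm{Box,Lip},B} \subset \mathcal{Q}(\mathscr{E};c)$ into a quantitative high-probability regret bound that furnishes an admissible sequence $\{\pi_t\}$ for Theorem~\ref{thm:rate}. Decompose $\rho_t(\mathscr{E};p) = \sum_{s \le t}\mathrm{KL}(p\|\hat q_{s-1}) + W_t$, where $W_t$ is the martingale $\sum_{s\le t}\big(\log p(X_s) - \log\hat q_{s-1}(X_s) - \mathrm{KL}(p\|\hat q_{s-1})\big)$; its increments are bounded by $\log B$ in absolute value, so $|W_t| = \widetilde{O}(\sqrt{t})$ with probability $1 - t^{-2}$ by a Freedman/Azuma bound. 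For the predictable part, the Wong--Shen-type entropy analysis of sieve MLEs of Lipschitz densities (exactly as invoked in the proof of Corollary~\ref{prop:consistency_box}) gives $\mathrm{KL}(p\|\hat q_{s-1}) = \widetilde{O}(s^{-2/(d+2)})$ with probability at least $1 - e^{-\Omega(s^{d/(d+2)})}$, uniformly over the class; summing and a union bound then yield $\sum_{s\le t}\mathrm{KL}(p\|\hat q_{s-1}) = \widetilde{O}(t^{d/(d+2)})$ off an exponentially small event. Combining, $\rho_t/t = \widetilde{O}(t^{-2/(d+2)}) + \widetilde{O}(t^{-1/2})$ with probability $1 - o(t^{-2})$, so there is $T_{\mathrm{est}} = \widetilde{O}\big(d_H(p,\lc)^{-\max(d+2,\,4)}\big)$ such that $\rho_t/(t\,d_H^2(p,\lc)) \le 1/25$ with probability $1 - o(t^{-2})$ for all $t \ge T_{\mathrm{est}}$; take $\pi_t := 1$ for $t < T_{\mathrm{est}}$ and $\pi_t := o(t^{-2})$ otherwise, so $\pi_t \to 0$ and $\sum_t \pi_t = \widetilde{O}\big(d_H(p,\lc)^{-\max(d+2,\,4)}\big)$.

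The final step is bookkeeping. Feed this $\{\pi_t\}$ into Theorem~\ref{thm:rate} and write $\E[\tau_\alpha] = \sum_{t\ge 0} P^\infty(\tau_\alpha > t)$. For $t < T_0 + \log(1/\alpha)/(c\,d_H^2(p,\lc))$ bound the summand by $1$; for the remaining $t$ use $P^\infty(\tau_\alpha > t) \le \pi_t + \tfrac1c e^{-ct\,d_H^2(p,\lc)} + \tfrac1c e^{-ct\Delta_P^2/d^2}$, whose two geometric series sum to $O(d_H(p,\lc)^{-2}) + O(d^2\Delta_P^{-2})$. Altogether $\E[\tau_\alpha] = \widetilde{O}\big(T_0 + T_{\mathrm{est}} + d_H(p,\lc)^{-2}\big)$. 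Since $T_0 = \widetilde{O}\big(d_H(p,\lc)^{-\max((d+4)/2,\,2(d-1))}\big)$ once $\Delta_P,d$ are constants, $T_{\mathrm{est}} = \widetilde{O}\big(d_H(p,\lc)^{-\max(d+2,\,4)}\big)$, and each of these exponents is at most $2(d+3)$ for every $d\ge 1$, we obtain (generously) $\E[\tau_\alpha] = \widetilde{O}\big(d_H(p,\lc)^{-2(d+3)}\big)$, as claimed. I expect the main obstacle to be the second step: one must extract from the sieve theory a regret tail that is simultaneously small for all $t$ above a threshold, summable in $t$, and uniform over the whole Lipschitz--bounded class, and then carefully verify that the remaining $d$- and $B$-dependent constants genuinely collapse into $\widetilde{O}(\cdot)$; everything after that is a routine combination of Theorem~\ref{thm:rate} with geometric-series estimates.
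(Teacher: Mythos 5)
Your overall strategy is exactly the paper's: fix the Wong--Shen sieve MLE, establish a summable high-probability tail bound $\{\pi_t\}$ on the event $\{\rho_t(\mathscr{E};p) > t\,d_H^2(p,\lc)/25\}$, feed it into Theorem~\ref{thm:rate}, and sum the tails. Your first and third steps are fine (in particular, the observation that $p \ge 1/\boundparam$ on the box forces $\Delta_P$ to be a constant is correct and is implicitly used by the paper). The problems are both in your second step, which you rightly flag as the crux.

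First, the rate. You claim the sieve analysis gives $\mathrm{KL}(p\|\hat q_{s-1}) = \widetilde{O}(s^{-2/(d+2)})$. That is the \emph{minimax} squared-Hellinger rate for Lipschitz densities, but it is not what Lemma~\ref{lem:kl_bound} delivers: there $\zeta_s = \widetilde{O}(s^{-1/(2(d+2))})$, so the achievable KL bound at time $s$ is $\widetilde{O}(\zeta_s^2\log(1/\zeta_s)) = \widetilde{O}(s^{-1/(d+2)})$ --- the class is non-Donsker for $d \ge 2$ (bracketing entropy exponent $d+1>2$), and the Wong--Shen fixed point is then governed by the lower limit of the entropy integral, yielding the slower exponent. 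Correcting this changes your threshold to $T_{\mathrm{est}} = \widetilde{O}(d_H(p,\lc)^{-2(d+2)})$, which still sits under the claimed $2(d+3)$, so this error alone would not sink the corollary; but it does mean your intermediate bounds are not obtainable from the cited machinery. (The paper's exponent $2(d+3) = 2(d+2)+2$ arises precisely because its argument pays an extra factor of $d_H^{-2}$ to absorb a burn-in, as explained next.)

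Second, and more seriously, the assertion $\pi_t = o(t^{-2})$ does not follow from the union bound you describe. The per-$s$ failure probabilities $\exp\left(-\Omega(s^{\gamma})\right)$ summed over \emph{all} $s \le t$ include the early terms, which contribute a fixed positive constant independent of $t$; the resulting bound on $P^\infty(\rho_t/(t\,d_H^2) > 1/25)$ therefore does not decay in $t$ at all, and $\sum_t \pi_t$ is not controlled. This is exactly the difficulty the paper's proof is engineered around: it splits $[1, T_1+\tau]$ at $T_1+\theta\tau$, bounds the log-ratio on the prefix deterministically by $2\log\boundparam$ per term (forcing $\tau \gtrsim T_1\log\boundparam/d_H^2$, whence the extra $d_H^{-2}$), and applies Lemma~\ref{lem:kl_bound} at the \emph{fixed} scale $\zeta(d_H)$ on the window so that the union-bounded failure probability is a geometric series decaying in $\tau$. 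You need some such device --- either the paper's windowing or a $t$-dependent choice of thresholds --- to produce a $\{\pi_t\}$ that genuinely decays and sums; as written, your second step does not supply one.
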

The proof is in \S\ref{appx:bounded_unit_box_regret}. 
We remark that the above rates adapt to the extent to which the underlying law $p$ violates log-concavity in the sense that the time-scales of rejection are driven by $d_H(p, \lc).$ Indeed, this represents an important advantage of sequential tests as opposed to batched tests, in that validity is retained, and detection is guaranteed at an adapted time-scale.

\noindent \emph{On tightness.} We note that the exponents of Theorem~\ref{thm:rate}, and in particular, Corollary~\ref{thm:rate_for_box} are likely loose for the problem of testing log-concavity. This is an artefact of the analysis; for instance, the slow rate in Corollary~\ref{thm:rate_for_box} is largely determined by the rate requirements for estimating Lipschitz laws on the unit box, which arises due to the $\pi_t$ terms present in Theorem~\ref{thm:rate}. It is possible that this aspect can be improved, since nothing neccessitates that we use an estimator that captures the underlying density $p$ well.

Indeed, instead of analysing the prediction regret with respect to $p$ itself, we could decompose $R = \sigma_t(q) - \rho_t(\mathscr{E};q)$ for some other law $q$, perhaps lying in a smaller class of densities $\mathcal{Q}$ than those possible for $p$. As long as (i) $\mathscr{E}$ does as good a job at prediction under the log-loss as any law in $\mathcal{Q},$ and (ii) no matter what $p \not\in \lc$ is, there is a law in $\mathcal{Q}$ that is `closer to' $p$ than any law in $\lc,$ a similar analysis should be possible, although this requires possibly subtle off-model control on the behaviour of $\mathscr{E}$, as well as a careful choice of $\mathcal{Q}$ itself to control the relative values of distances such as $d_H(p, \mathcal{Q})$ and $d_H(p, \lc)$. One such approach which appears promising for log-concave laws is to exploit $s$-concave densities to play the role of $\mathcal{Q},$ which are particularly attractive since they form a rich extension of the class of log-concave laws, but nevertheless enjoy identical minimax MLE convergence rates as them \cite{han2016approximation, han2021set}.

\section{Algorithmic Proposal, and Simulation Study}\label{sec:sim}

We now proceed to algorithmically describe the ULR e-process based test for log-concavity, and investigate the behaviour of a concrete implementation of the same on a simple parametric family. 

\subsection{Computational Aspects, Batching, and a Concrete Testing Algorithm}

Under specification of the sequential estimators $\mathscr{E}$, and a method for fitting the log-concave MLE, the statistic $R_t$ is explicitly computable, and thus naturally leads to implementations. While the e-process is powerful against wide classes of alternatives, its implementation suffers from a fundamental computational issue, that arises due to the recomputation of $\hat{q}_t$ and $\hat{p}_t$ in each round. This cost grows superlinearly with $t$ since since the entire denominator $\prod \hat{p}_t(X_s)$ must be evaluated on the entirety of the stream, and the cost of estimating this $\hat{p}_t$ is itself superlinear in the number of samples $t$. A second issue arises upon increasing the data dimensions $d,$ since computational costs of estimating $\hat{p}_t$ grow quite fast with this. Even though polynomial in $d$ algorithms exist for computing the log-concave MLE \cite{axelrod2019polynomial}, the fastest available method for this is typically hundreds of times slower when processing $\sim 100$ points in even the modest $d = 5$ when compared to the time needed to process the same sized dataset for $d = 1$ \cite{rathke2019fast}.  We address this issue by exploiting batching to reduce the computational load, which makes computations viable in the moderate $d \le 4$. The idea is to wait to accumulate $\batchinterval >1$ fresh samples before recomputing $R_t,$ rather than updating it at every round.\footnote{Note of course, that for our simulation study, the repetition of simulations required to study power and size mean that we only implement our fully nonparametric test for up to $d = 4$. Nevertheless, even this is reasonable to run for $d = 6,$ wherein a single run over a horizon of $100$ steps takes about $20$s.}

Let us point out that such batched updates still retain the e-process property, and thus validity, as long as the $\hat{q}_{t-1}$ remain nonanticipating over the entire batch. Concretely, we may set a schedule, captured by an increasing sequence of times $\mathscr{T} = \{t_k\},$ and evaluate the statistic \[ R_t(\mathscr{T}) = R_{t-1}(\mathscr{T}) \indi\{t \not\in \mathscr{T} \} + \sum \indi\{t \in \mathscr{T}\} \prod_{j \le k(t)} \prod_{s = t_{j-1}}^{t_j} \frac{\hat{q}_{t_{j-1}}(X_s)}{ \hat{p}_{t_k}(X_s)},\] where $k(t) = \max\{ k : t_k \le t\}.$ In words, the schedule divides streams into a sequence of batches of size $t_k - t_{k-1},$ and each time a new batch is accumulated, we evaluate a new estimate $\hat{q}$ on the previous batches, and re-evaluate the log-concave MLE on the entirety of the data seen. This process continues to be dominated by a batched version of $F_t(P),$ which retains the martingale property under $P^\infty,$ thus yielding validity. The simplest viable schedule is to set $t_k = k \batchinterval$ for a constant `batching interval' $\batchinterval$. This effectively boils down to testing the log-concavity of $p^{\otimes \batchinterval},$ which is valid since tensor products of log-concave laws remain log-concave. 

Notice that such batching may result in a reduction in power. For instance, rejection can only occur at the time $t_k,$ and further the statistic may be deflated because data points with a large signal may be `washed-out' due to milder behaviour across the remainder of the batch. Nevertheless, we find in simulation studies that this drop in power is nominal, and comes at the cost of a significant improvement in runtime. 

With this in hand, we can provide an explicit algorithmic description of our test below.

\begin{algorithm}[H]                %
    \caption{Log-Concave Universal Likelihood Ratio Test }\label{alg:log_concave_e_process_test}
    \begin{algorithmic}[1]
        \State \textbf{Input:} Batching schedule $\{t_k\}_{k = 1}^\infty$ with $t_1 \ge d+1,$ estimator $\mathscr{E}$, level $\alpha$.
        \State \textbf{Initialise:} $R_t \gets 1$ for $t \le t_1$, $K \gets 1$, $N_1 = 1$, $t \gets 1.$
        \While{$R_t < 1/\alpha$}
            \If{$t = t_K$}
                \State $\hat{q} = \mathscr{E}(X_1^{t- t_{K-1}}).$
                \State $N_t \gets N_{t-1} \cdot \prod_{s = t_{K-1} + 1}^{t_K} \hat{q}(X_s).$
                \State $\hat{p} \gets \mathscr{L}(X_1^{t_K}).$
                \State $R_t \gets N_t \cdot \left( \prod_{s = 1}^{t_K} \hat{p}(X_s)\right)^{-1}.$
                \State $K \gets K + 1.$
            \Else
                \State $R_t \gets R_{t-1}$.
                \State $N_t \gets N_{t-1}$.
            \EndIf
            \State $t \gets t+1$.
        \EndWhile    %
    \end{algorithmic}%
\end{algorithm} 

\subsection{Evaluating the ULR E-Process Test}

We investigate the behaviour of the test of Algorithm~\ref{alg:log_concave_e_process_test} on the following simple test-bed family of laws, where $\mathbf{e}_d = \mathbf{1}_d/\sqrt{d}$ is the unit vector along the all-ones direction in $\mathbb{R}^d,$. \[ p(x;\mu,d) := \frac{1}{2 (2\pi)^{d/2}}\left( \exp{ - \|x - \tfrac{\mu}{2}\mathbf{e}_d\|^2/2} + \exp{ - \|x + \tfrac{\mu}{2} \mathbf{e}_d\|^2/2}\right), \] i.e., balanced two component Gaussian mixture laws with means $\pm \tfrac{\mu}{2} \mathbf{e}_d$ and identity covariance. The norm of the mean-difference is precisely $\mu$, which we assume without loss of generality to be nonnegative. A small modification of this family of laws was also used as a test-bed for the non-sequential test proposed by Dunn et al.~\cite{dunn2021-logConcaveTesting}. 

These laws are extremely convenient for proof-of-concept investigation of tests of log-concavity. Indeed, observe that up to a rotation, the $d$-dimensional law is a tensorisation of the one-dimensional law with a log-concave law (specifically a standard Gaussian in $d-1$-dimensions). Since log-concavity properties are invariant under rotations, and since the log-concave M-projection of product laws is a product of the marginal log-concave M-projections \cite{logconcavity_survey}, this gives a very simple characterisation of the log-concavity properties of this law. Concretely, the distance from log-concavity is purely a function of the norm of the mean-difference $\mu,$ and $p(x;\mu,d)$ is log-concave if and only if $\mu > 2.$ These laws thus give us a simple way to check both the size and power of the test statistics, as well as study the effect of increase in dimension on the power.

Finally, we give details of the simulations. All data reported is a mean over 100 runs of each experiment. All simulations are run up to $100$ time steps, which is mainly for computational practicality. Note thus that our size estimates are systematically lower than the true size (with infinte horizon). We run the case of $d = 1,$ which is computationally the cheapest, over the longer horizon of $500$ time steps to illustrate that not much changes in this case, at least as regards the empirical validity of our test. For $d =1,2,3$, the tests are batched at an interval of $\batchinterval = 20,$ while for computational practicality the test is batched at $\batchinterval = 25$ for $d=4$. These are significant fractions of the time horizon studied, but do not significantly lower power, at least for $d= 1,$ as demonstrated by explicit simulation. 

All code is implemented in $\texttt{R}$. The nonparametric estimator used for $\mathscr{E}$ is the kernel density estimate as implemented in the \texttt{ks} package \cite{chacon2018multivariate}, and the log-concave MLE used is either from the \texttt{logConDens} package \cite{dumbgen2011logcondens} for $d = 1$ or the \texttt{fmlcd} package ($d = 2,3,4$) \cite{rathke2019fast}. We note that the latter is not guaranteed to return the log-concave MLE since it optimises a non-convex approximation to the program defining the same. However, we find that compared to alternatives like the \texttt{logConcDEAD} package \cite{cule2010maximum}, the \texttt{fmlcd} implementation retains similar validity and power, but runs significantly faster.  We also investigate using parametric Gaussian mixture model fits to illustrate the effect of inefficiency in $\mathscr{E}$ on power, for which we use the EM algorithm as implemented in the \texttt{mclust} package \cite{scrucca2016mclust}. All simulations were executed on an AMD Ryzen 5650U processor, a medium range CPU for a laptop computer.

\subsubsection{Fully Nonparametric tests}

Figure~\ref{fig:fully_nonparametric_tests} shows the behaviour of our instantiation of the algorithm with the fully nonparametric approach of using kernel density estimators as $\mathscr{E}$ over $p(\cdot;\mu,d)$ for $d = 1, 2$ as $\mu$ is varied, run at the size $\alpha = 0.1$, with $\batchinterval =  20$ for $d \in \{1,2,3\}$ and $\batchinterval =  25$ for $d = 4.$ We plot five traces which record the fraction of runs out of $100$ independent runs that the test rejected the null hypothesis at times smaller than $20, 40, 60, 80,$ and $100,$ where $100$ was the horizon over which the test was run. 

There are three major observations. Firstly, we observe that the test shows excellent validity. Indeed, the null hypothesis holds true for $\mu \le 2,$ and the test does not reject more than a $0.02$ fraction of the time in either case in this scenario. Secondly, we observe that at least for sufficiently large $\mu$, all of the tests do reject within $100$ steps. Finally, we notice that the power sharply drops as $d$ increases. To concretely discuss this, let $\mu_*(d)$ be defined as the smallest value of $\mu$ for which $\mathbb{P}_{X \sim p(\cdot;\mu,d)}(\tau_{0.1} \le 100) = 0.9$. The plots in figure~\ref{fig:fully_nonparametric_tests} give us estimates of $\mu_*(d)$, which increase sharply with $d$---from about $6$ in $d= 1$ to over $1000$ in $d = 4$.\footnote{With pilot simulations in $d=5,$ we observe that $\mu_*(5) \approx 1500.$ We note that these simulations were already too costly, in terms of time, to implement completely for $d = 5,$ due both to the increased costs of fitting MLEs in higher dimensions, and due to the fact that as rejection rates decrease with dimension, more runs need to be executed over the whole horizon, which extends the total cost of the experiment. We hope to implement the method on larger computational resources for such moderate $d$s.}

This reduction in power is perhaps expected, given the considerable deterioration in the nonparametric estimation rates with $d$. Nevertheless, we may question how much of the above decay in power is driven by the inefficiencies in fitting log-concave MLEs, and how much accrues due to the inefficiency of kernel density estimation. We investigate this effect in \S\ref{sec:oracle_tests} by studying Oracle tests.

\paragraph{Longer Run for $d = 1$.} To show that the validity persists over longer time horizons, we implement the fully nonparametric method over $500$ time steps for $d = 1,$ using $\batchinterval =  50.$ Observe in Figure~\ref{fig:longer_runs} that rejection under the null $\mu\le 2$ is well controlled even at this increased timescale, while rejection rates steadily improve as the horizon grows, although the improvement is somewhat marginal over the horizon of $500$ versus $200$.

\begin{figure}[t]
    \centering
    \includegraphics[width = 0.45\linewidth]{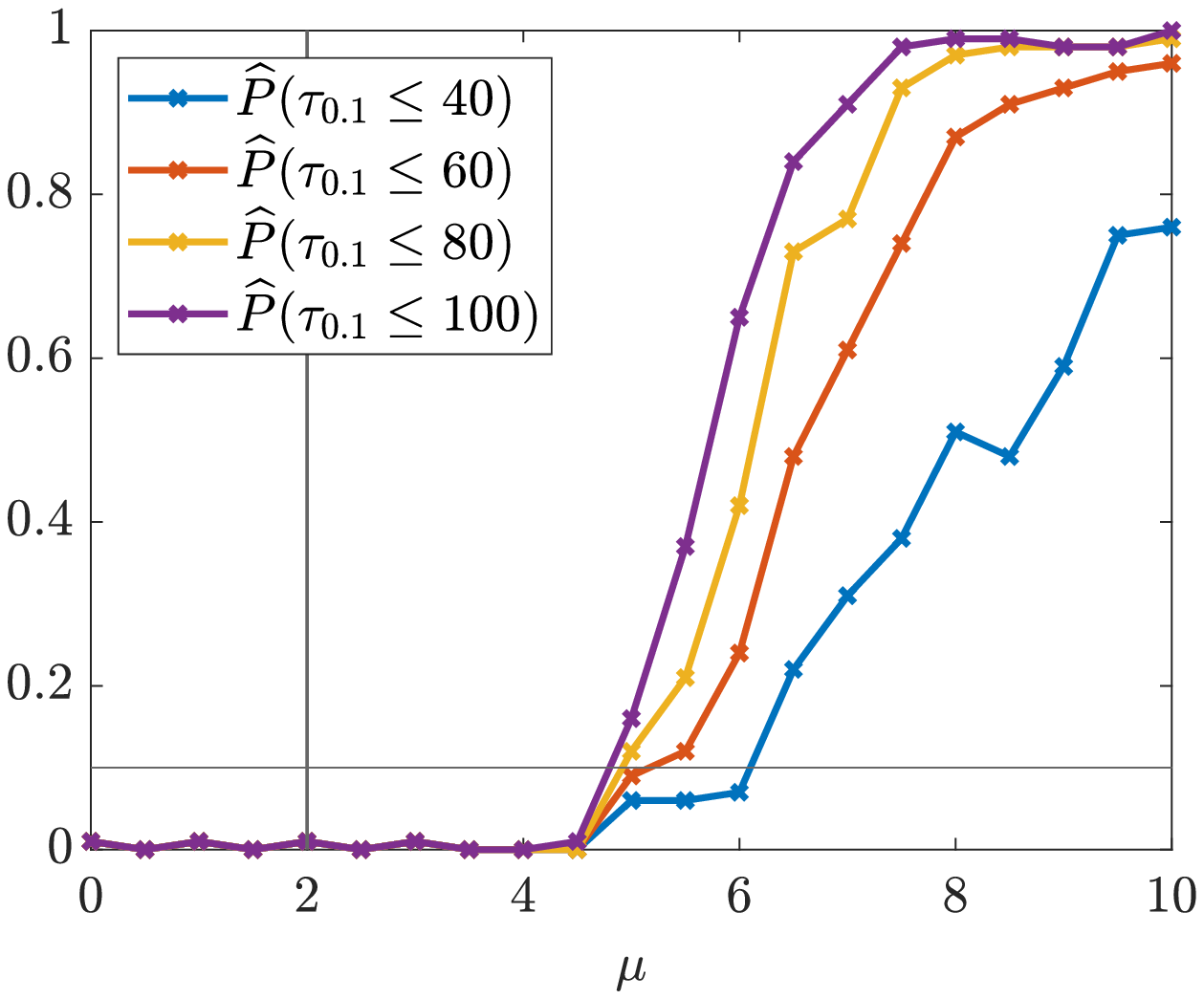}~
    \includegraphics[width =0.45\linewidth]{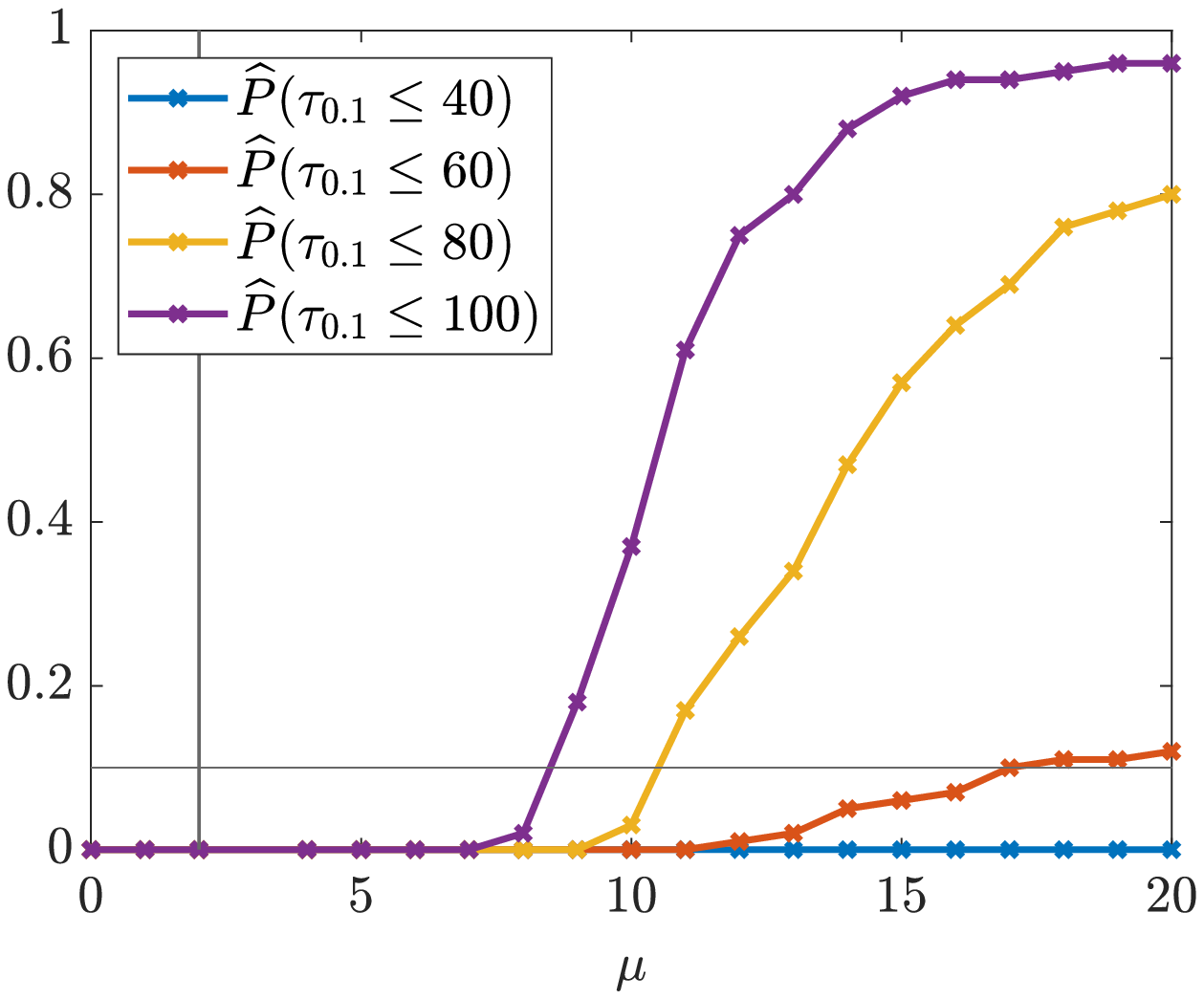} \vspace{-.25\baselineskip}\\
    \includegraphics[width = 0.45\linewidth]{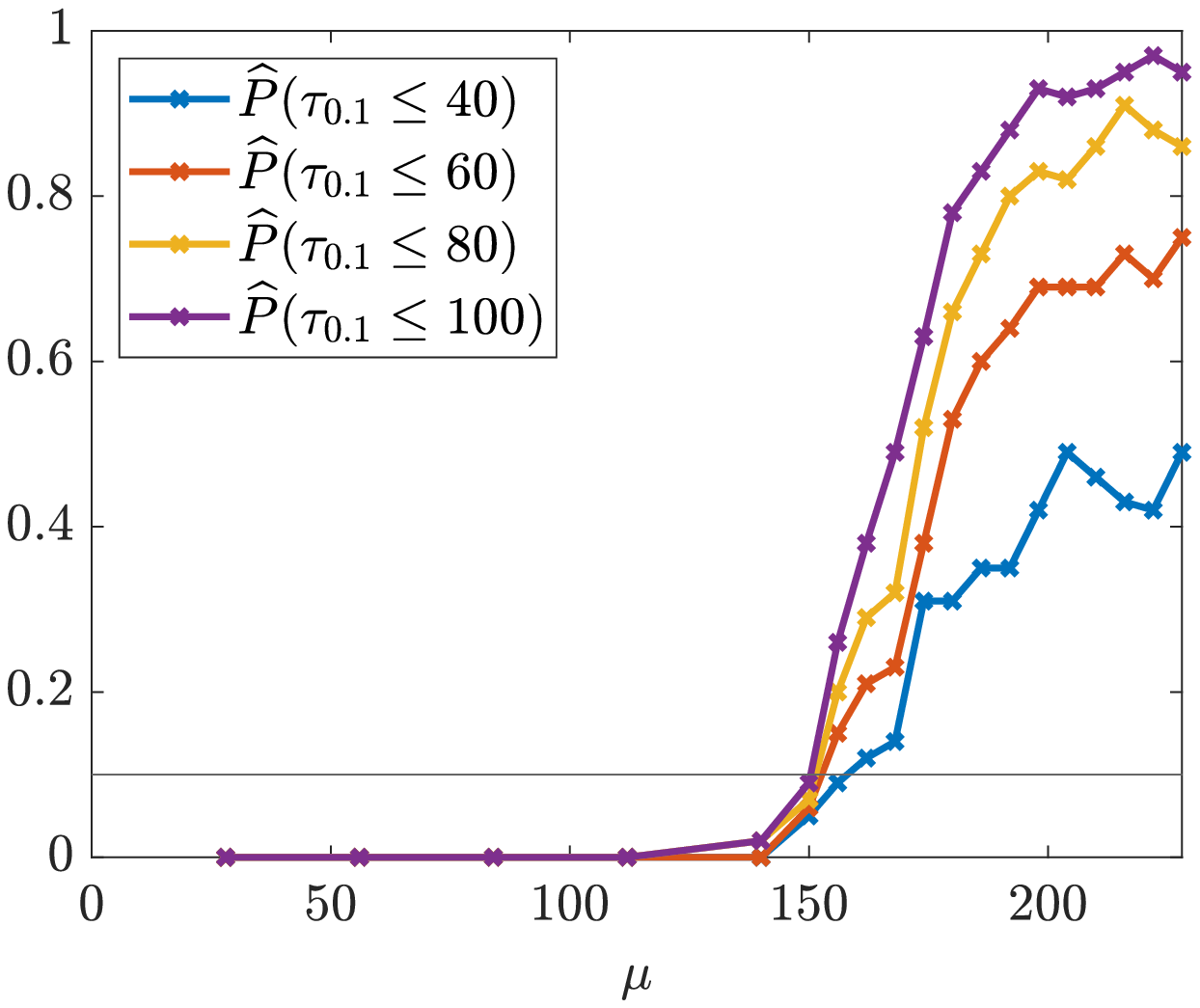}~
    \includegraphics[width =0.45\linewidth]{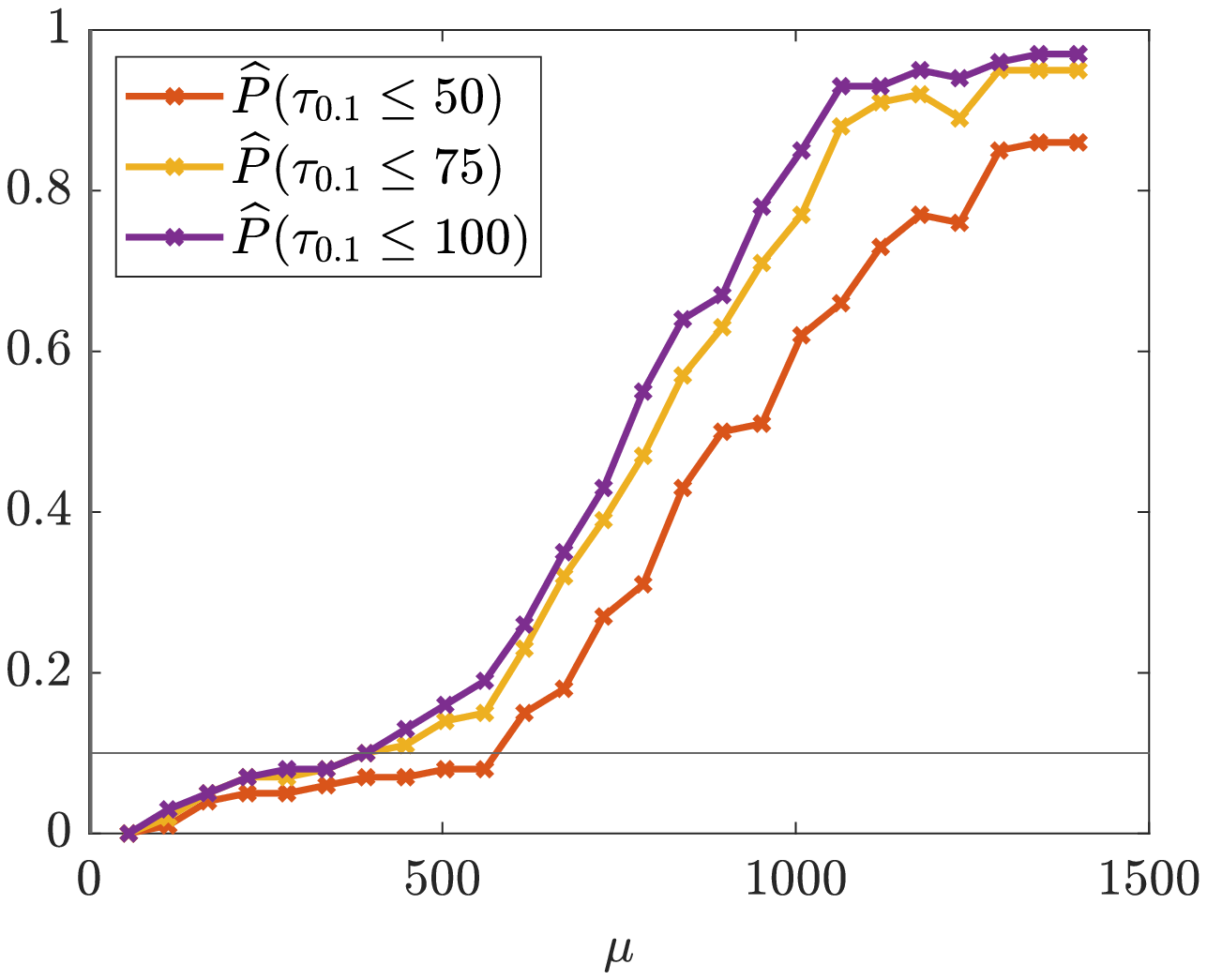} \vspace{-\baselineskip}
    \caption{\textbf{Performance of the fully nonparametric test.} Empirical rejection rates (over $100$ simulations) at $\alpha = 0.1$ versus the mean difference $\mu$ for fully nonparametric test implementations over four cases: $d=1, \batchinterval =  20$ (top left); $d = 2, \batchinterval =  20$ (top right); $d = 3, \batchinterval =  20$ (bottom left) ; $d = 4, \batchinterval =  25$ (bottom right). The thin horizontal line plots the level $\alpha = 0.1,$ and the vertical line marks $\mu = 2$ since $p(\cdot;\mu,d) \in \lc \iff \mu \le 2.$  Observe the strong validity properties in all plots, as well as the deterioration of power in higher dimensions, as signalled by the sharp increases in the scales of the X-axis.}\vspace{-1.5\baselineskip} %
    \label{fig:fully_nonparametric_tests}
\end{figure}

\begin{figure}[H]
    \centering
    \begin{minipage}{0.48\linewidth}
    \includegraphics[width = \linewidth]{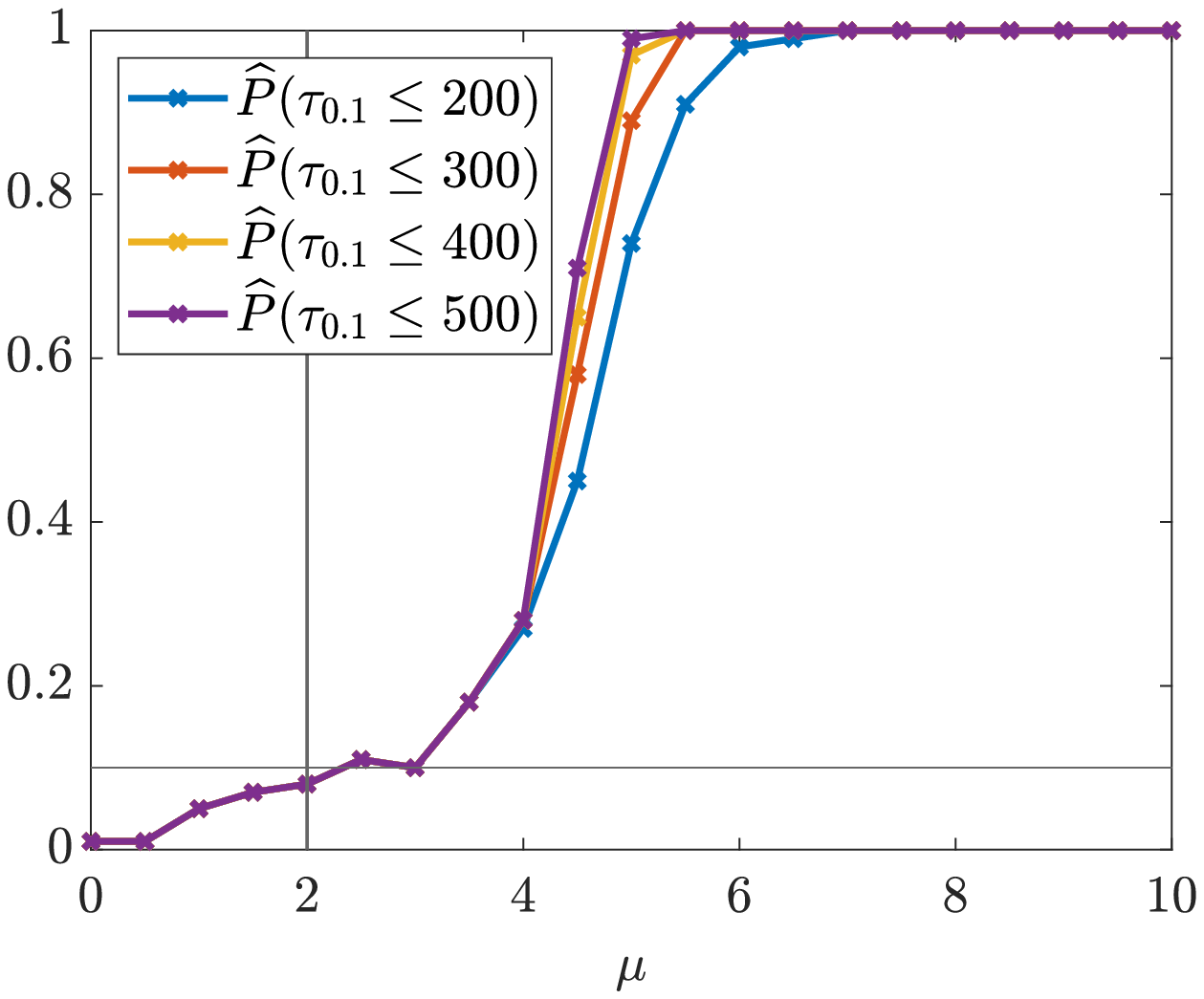}\vspace{-\baselineskip}
    \caption{\textbf{Performance of the fully nonparametric test over long horizons.} Empirical rejection rates (over $100$ simulations) in the setting of Figure~\ref{fig:fully_nonparametric_tests} for $d = 1,$ ran over a horizon of length $500$ with $\batchinterval = 50$. Observe that the validity persists over this longer horizon, and that power improves for $\mu > 2.$}%
    \label{fig:longer_runs}
    \end{minipage}~\hspace{0.019\linewidth}~
    \begin{minipage}{0.48\linewidth}
        \includegraphics[width = \linewidth]{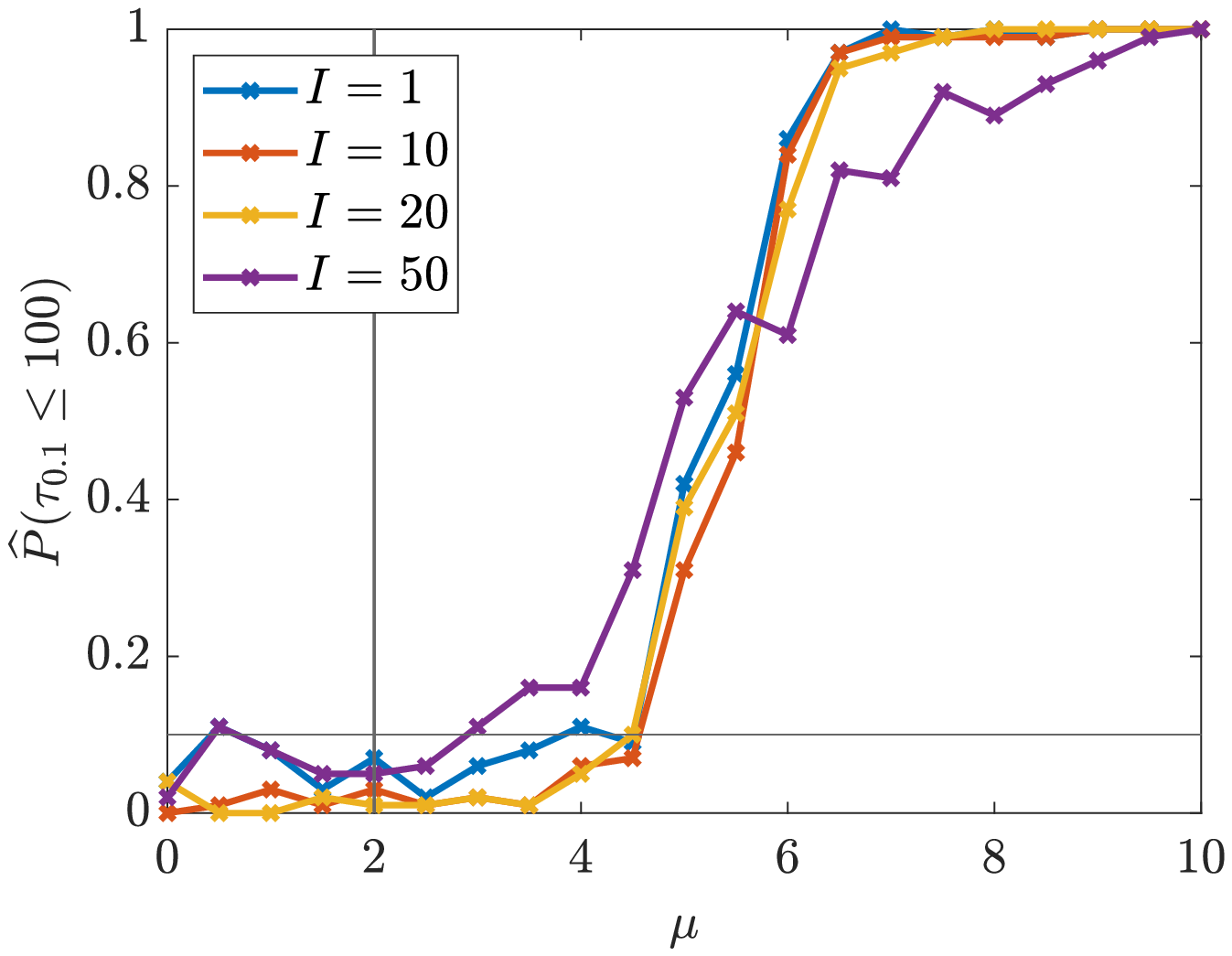}\vspace{-\baselineskip}
    \caption{\textbf{Effect of $\batchinterval$ on the fully nonparametric test. }Empirical rejection rates (over $100$ simulations) in the setting of Figure~\ref{fig:fully_nonparametric_tests} for $d = 1$ and with varying $\batchinterval \in \{1,10,20,50\}$. Observe that the rejection rates for $\batchinterval = 10,20$ are roughly the same as for $\batchinterval = 1,$ while $\batchinterval = 50$ suffers large losses.}%
    \label{fig:effect_of_batching}
    \end{minipage}
\end{figure}

\paragraph{Effect of Batching Interval.} As seen in Figure~\ref{fig:effect_of_batching},  batch sizes of $\batchinterval =  10$ and $20$ have a mild effect on the rejection rates under alternate setting ($\mu \ge 2$) when compared to the direct $\batchinterval =  1$. Interestingly, note that $\batchinterval =  20$ does somewhat better than $\batchinterval =  10$ in the setting of moderate $\mu$ (the range $4-6$), and slightly loses power for larger intermediate $\mu$ (the range $6-8$). In turn, the no batching setting, i.e., $\batchinterval =  1,$ is observed to suffer deterioration in its size ($\mu < 2$), although this remains at an acceptable level. 

The large batch size $\batchinterval = 50$ suffers the same validity issues as $\batchinterval =  1,$ but does even better than it for small but non-null values of $\mu$ ($2$-$5$). Power considerably deteriorates for larger $\mu$ ($5$-$10$). While it is unclear how much of this is an artefact of the fact that the length of the horizon is only $100$, and how much is directly due to the larger batching interval, the fact that $\batchinterval =  10,20$ perform well suggests that so long as the batching interval is a relatively small fraction of the horizon length, the loss in power is not too bad.

\subsubsection[Oracle Tests, and the Effect of the Quality of the Alternate Sequential Estimator]{Oracle Tests, and the Effect of the Quality of $\mathscr{E}$}\label{sec:oracle_tests}

\begin{figure}[tb]
    \centering
    \includegraphics[width = 0.45\linewidth]{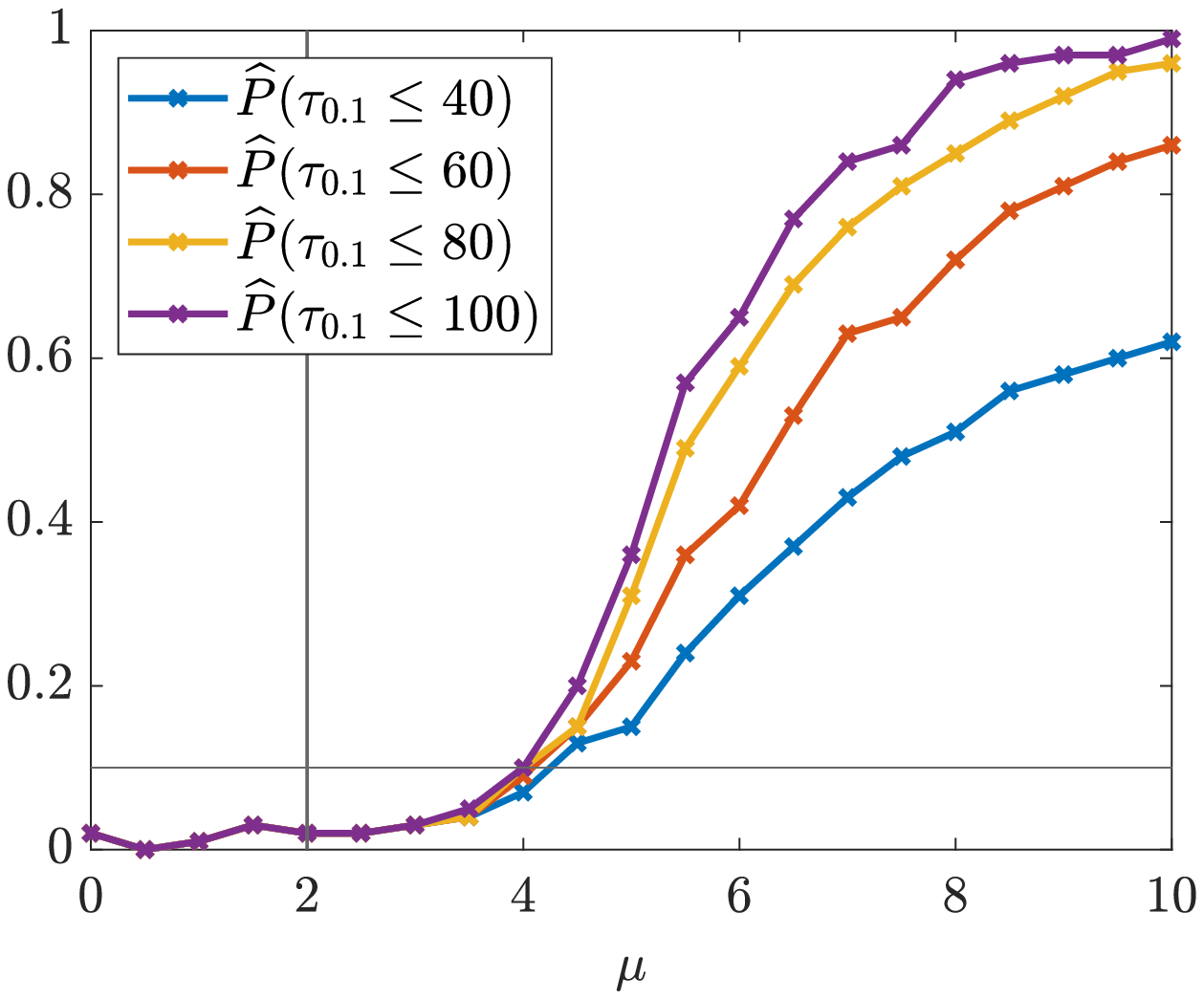}~
    \includegraphics[width = 0.45\linewidth]{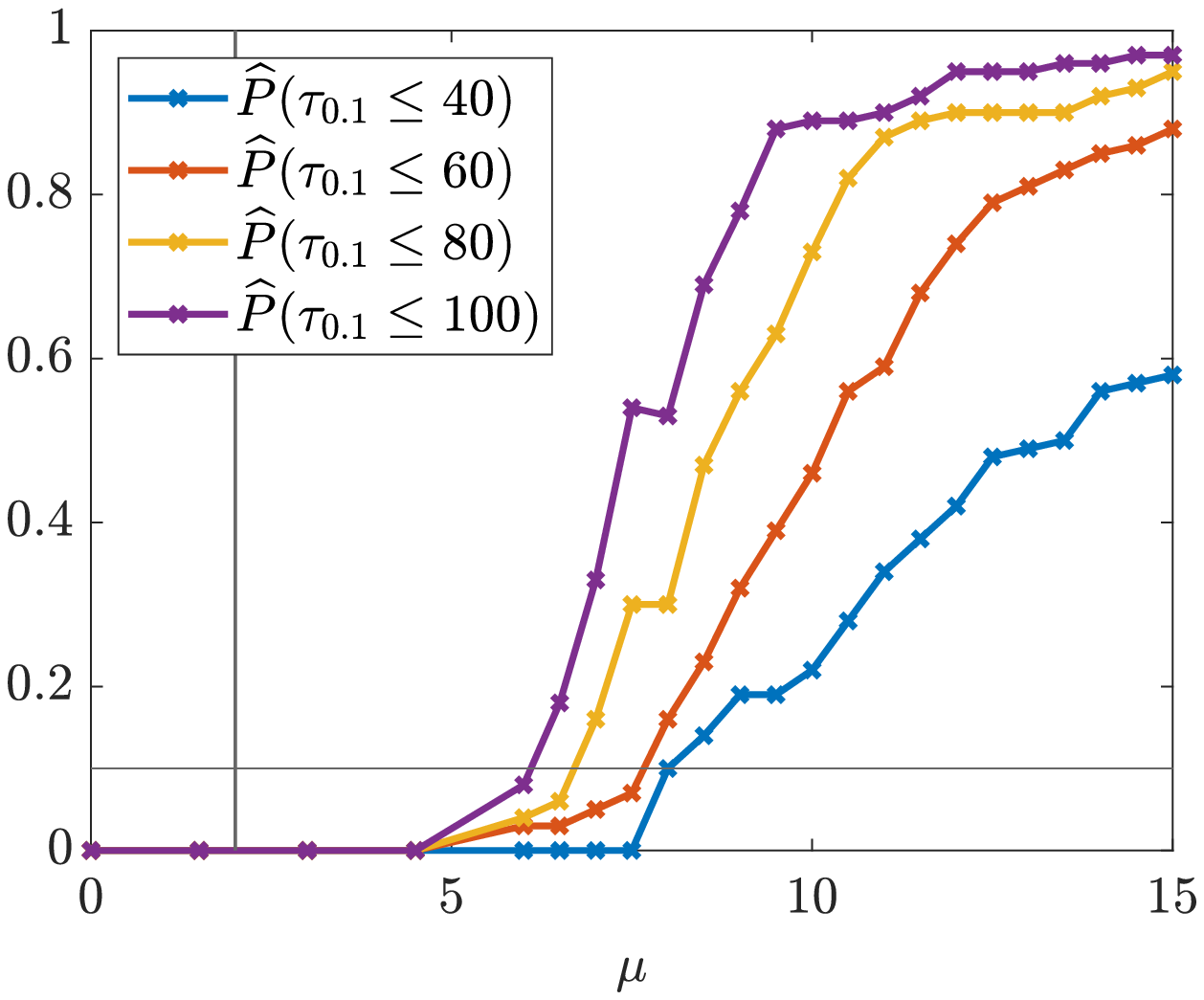} \vspace{-.5\baselineskip}\\ 
    \includegraphics[width =0.45\linewidth]{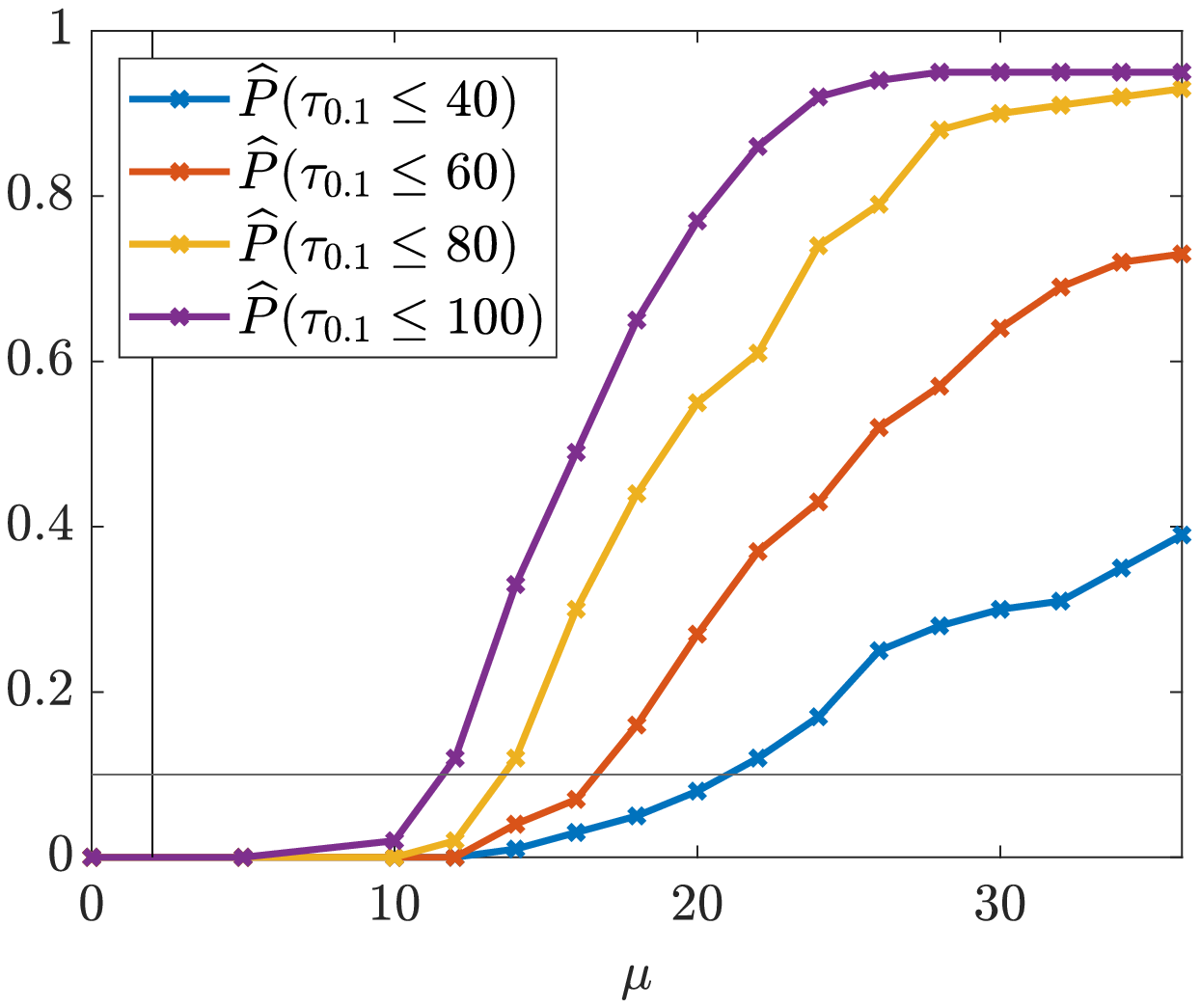}~
    \includegraphics[width =0.45\linewidth]{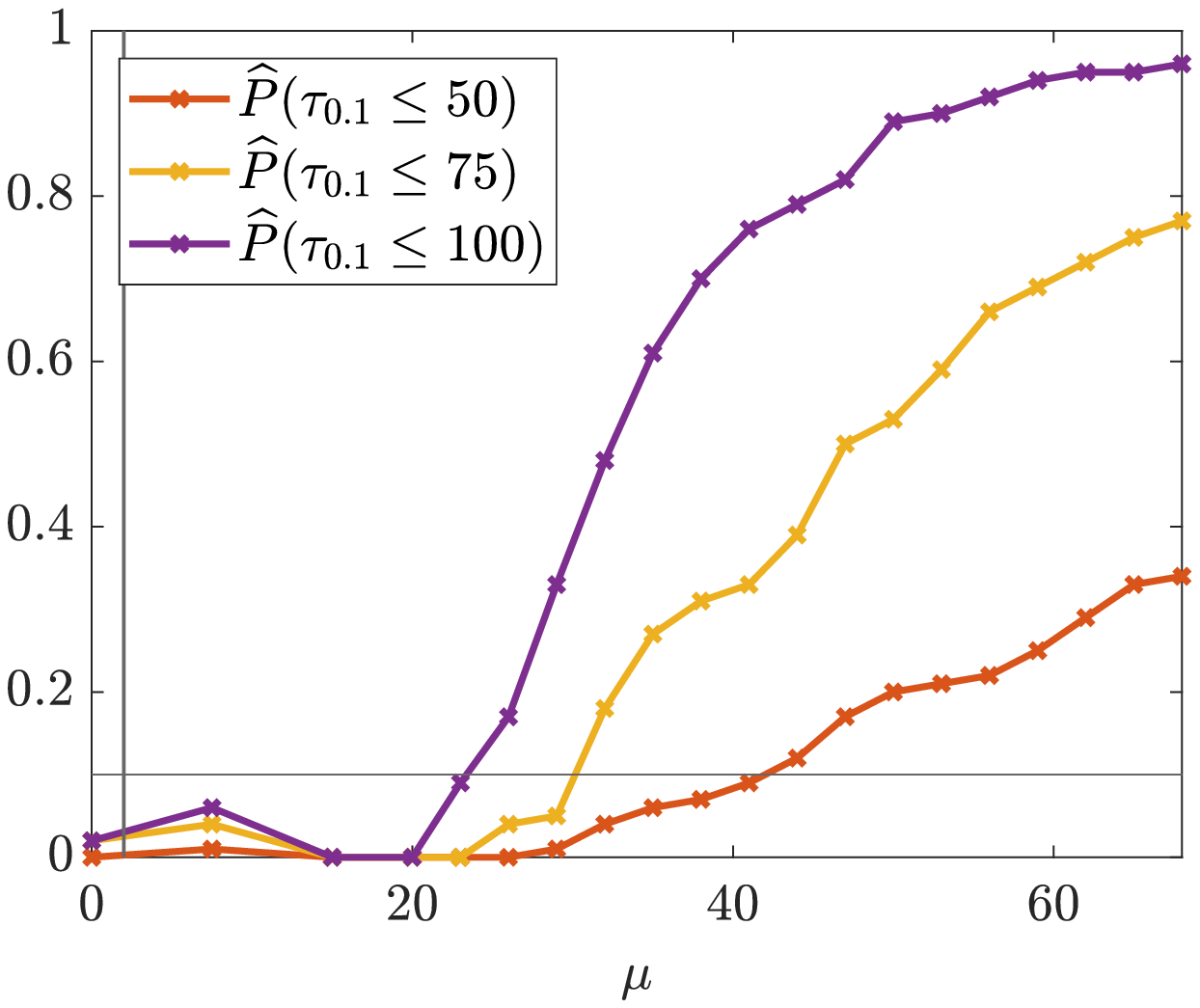}~ 
    \vspace{-\baselineskip}
    \caption{\textbf{Performance of the partial oracle test.} Empirical rejection rates (over $100$ simulations) at $\alpha = 0.1$ versus the mean difference $\mu$ for the partial oracle test implementations over four cases: $d=1, \batchinterval =  20$ (top left); $d = 2, \batchinterval =  20$ (top right); $d = 3, \batchinterval =  20$ (bottom left) ; $d = 4, \batchinterval =  25$ (bottom right). Observe that in each plot, the power improves starkly relative to the fully nonparametric test (Figure~\ref{fig:fully_nonparametric_tests}), as indicated by a strong contraction of the scale of the X-axis, especially in higher dimensions.}
    \label{fig:partial_oracle}
\end{figure}

\begin{figure}[tb]
    \centering
    \includegraphics[width = 0.45\linewidth]{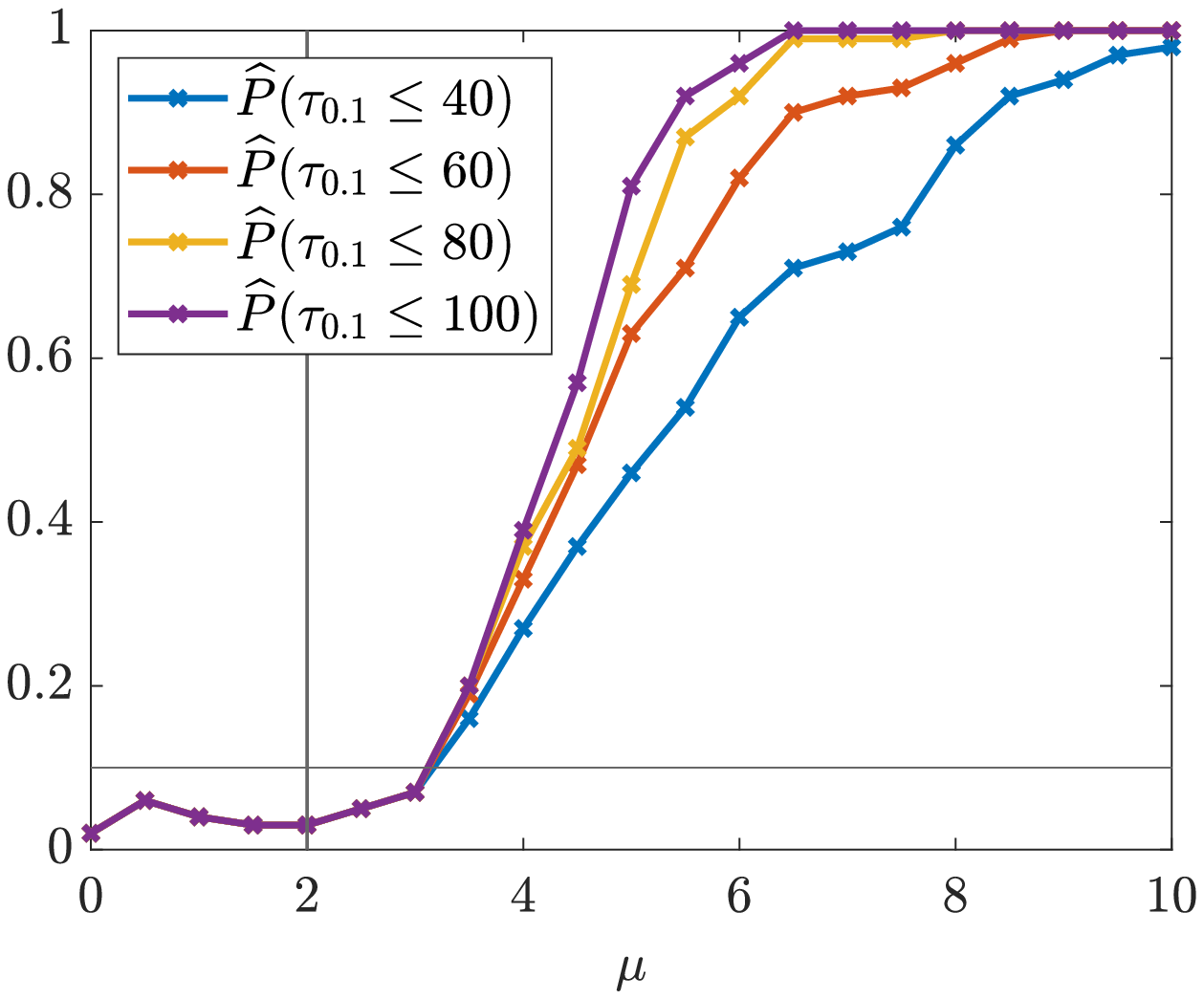}~
    \includegraphics[width =0.45\linewidth]{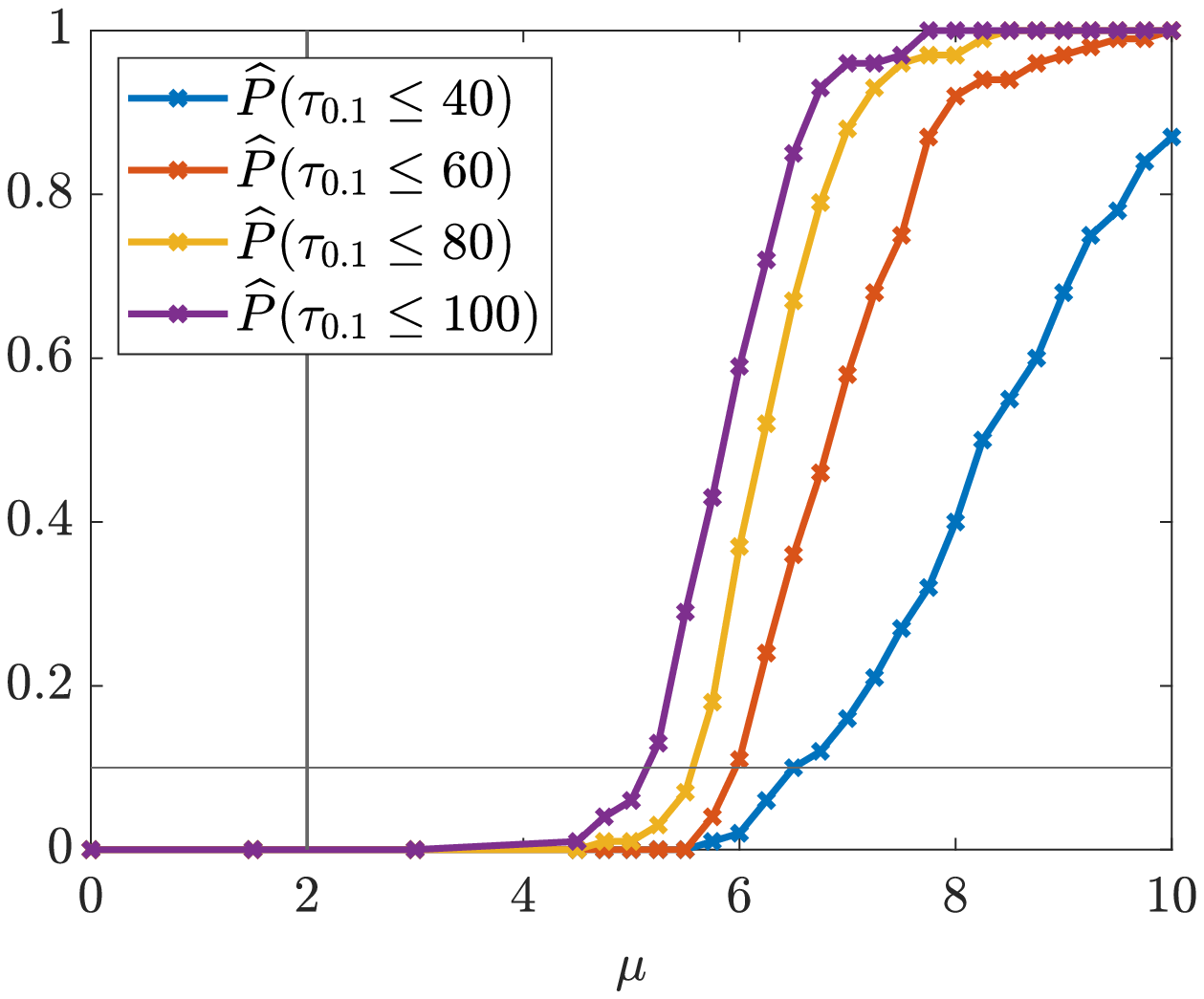} \vspace{-\baselineskip}\\
    \includegraphics[width =0.45\linewidth]{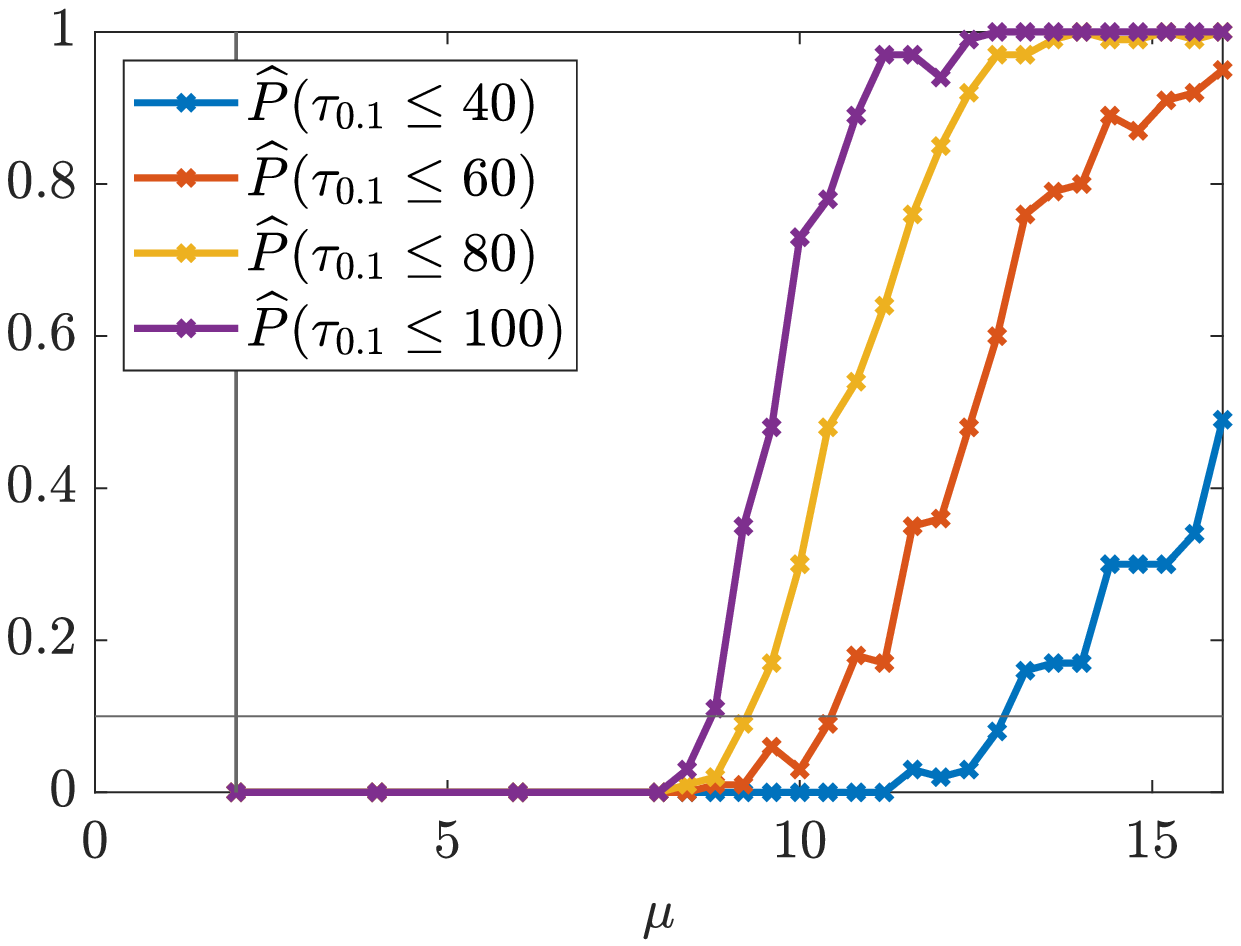}~
    \includegraphics[width =0.45\linewidth]{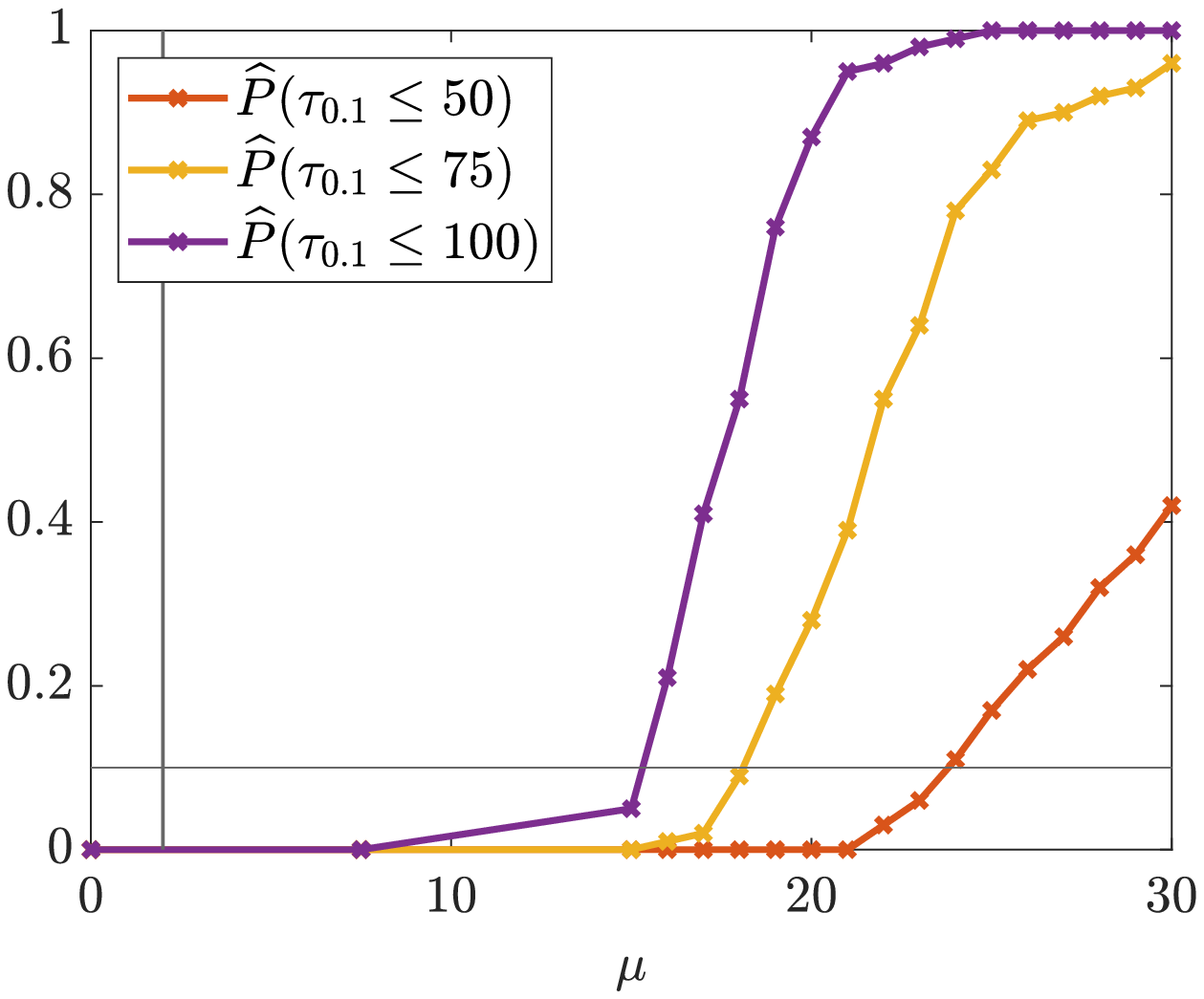}~
    \vspace{-\baselineskip}
    \caption{\textbf{Perforamance of the full oracle test.} Empirical rejection rates (over $100$ simulations) at $\alpha = 0.1$ versus the mean difference $\mu$ for the full oracle test implementations over four cases: $d=1, \batchinterval =  20$ (top left); $d = 2, \batchinterval =  20$ (top right); $d = 3, \batchinterval =  20$ (bottom left) ; $d = 4, \batchinterval =  25$ (bottom right). Observe the sharp improvement in power compared to Figure~\ref{fig:fully_nonparametric_tests}, especially in high dimensions, as indicated by a strong contraction in the scale of the X-axis. Observe also the improvement in power compared to Figure~\ref{fig:partial_oracle}, in that the curves reach high power at about half the $\mu$ that is needed for the partial oracle test.}
    \label{fig:full_oracle}
\end{figure}

\paragraph{Oracle Tests.} To probe the effect of the lossiness of the kernel density estimate on the power of the fully nonparametric test, we run `partial-oracle' and `full-oracle' oracle tests, which adjust $\mathscr{E}$ to exploit concrete information about the underlying laws $p(\cdot;\mu,d)$. In the partial-oracle, we adjust $\mathscr{E}$ to estimate a two-component Gaussian mixture model instead of a kernel density estimate, and in the full-oracle case, we directly set $\hat{q}_{t-1}(\cdot) = p(\cdot; \mu,d),$ i.e., we exactly evaluate the density. 

We expect that under data drawn from $p(\cdot; \mu,d),$ these tests are more powerful than the fully nonparametric tests discussed above, since the regret $\rho_t(\mathscr{E};p)$ would reduce in the case of the partial oracle due to a reduced complexity of the estimation class; and, of course, would reduce exactly to $0$ in the case of the full oracle. In either case, this effectively serves to increase $R_t$. These oracle tests thus let us probe the extent of the loss in power at a fixed $\mu$ (and thus a fixed distance from log-concavity) that arise purely due to the decay in rate of convergence of the log-concave MLE. In particular, the full oracle test captures exactly this effect, while the partial oracle test approaches this in a soft way. Figure~\ref{fig:partial_oracle} shows the performance of the partial oracle tests, and Figure~\ref{fig:full_oracle} shows the same for the full oracle test for $d \in\{ 1, 2, 3,4\}.$ %

Comparing Figures~\ref{fig:fully_nonparametric_tests} and~\ref{fig:partial_oracle}, we see that for using the partial oracle yields a marked increase in power, at least for $d > 1$. This is evident in $d=2$ by observing that the purple lines (overall rejection rate within $100$ times steps) rises higher and is nonzero at smaller values of $\mu$, as well as observing that the typical rejection time decreases substantially (for instance, rejection never happened below time step $60$ in the fully nonparametric case, but is quite prevalent at higher $\mu$s under the partial oracle). In $d = 3,4$ the effect is much starker - notice that the scale of the plot completely changes, from order of hundreds to tens in $d = 3$. This suggest that using the parametric mixture of Gaussians estimate offers strong improvements over the nonparametric KDE estimate due to the reduced variance scale of this estimator.

The above effect is seen even more starkly in the case of the fully oracle test, where each of the rejection rate curves is further improved (Figure \ref{fig:full_oracle}). For instance, our estimate of $\mu_*(d)$ (the smallest $\mu$ such that $\mathbb{P}_{p(\cdot;\mu,d)}(\tau_{0.1} \le 100) = 0.9)$) is about halved for the full oracle case when compared to the partial oracle (and improved manifold relative to the fully nonparametric test).

\paragraph{The Quality of $\mathscr{E}$ has a Strong Effect.} These observations from the oracle tests indicate that the quality of the estimate offered by $\mathscr{E}$ is very important in driving the overall power of the test. In these oracle examples, the quality improved by reducing the variance scale of the estimator, whilst keeping the bias at $0$ (since the law $p(\cdot;\mu,d)$ is representible by each of the estimator outputs).

Of course, in practice we cannot always hope to reduce the variance scale of our estimates whilst keeping the bias zero. Nevertheless, there is a tradeoff between the two implicit here. Indeed, as we discussed briefly in \S\ref{sec:power}, it is possible to use a biased $\mathscr{E}$ in the test, i.e. one that does not strictly estimate $p$, so long as the output of $\mathscr{E}$ does a better job of representing $p$ than the log-concave MLE. The strong dependence of the testing power on $\mathscr{E}$ indicates the critical need to investigate this design freedom, and to study how the trade-off between the variance, in terms of the convergence rates of $\hat{q}_{t},$ and the bias, i.e., the distance of $\lim \hat{q}_{t}$ from $p,$ should be balanced to optimise the testing power.

\section{Discussion}

Our work has shown that the sequential testing of log-concavity throws interesting challenges, in that the prevalent paradigm of test martingales cannot be fruitfully applied to this practically relevant setting. In the process of doing this, we developed a characterisation of the closed fork-convex hulls of independent sequential laws on a continuous space, thus contributing to the theory of this new tool that characterises the nonnegative supermartingale property. We then showed that the universal likelihood e-process instead does yield powerful tests for log-concavity. In particular, we demonstrated that these tests are consistent against large classes of nonparametric alternate laws, and further admit nontrivial rates, and made contributions to the off-the-model analysis of the convergence of log-concave MLEs, as well as the general theory of the power analysis of universal tests in order to do so. These properties are validated by running the test over a simple parametric family of laws, which further demonstrates the critical role of the sequential estimator $\mathscr{E}$ in the power of the test. Taking a broad view, the above can also be seen as a contribution to the emerging literature on e-processes, and in particular as additional evidence for the case that the study of sequential testing at large must exploit this powerful yet simple tool.

A number of directions, both theoretical and methodological remain open in this interesting subject, a few of which we discuss below.

Regarding fork-convexity, our characterisation in \S\ref{sec:triviality} and \S\ref{appx:triviality_proof} of the closed fork-convex hulls of i.i.d.~Gaussians can possibly be further enriched, and it would be very interesting to understand precisely which laws lie in this set. Additionally, notice that sequentially testing the Gaussiantiy of an i.i.d.~process itself is a basic problem that again cannot be tested using martingales (at least with respect to the natural filtration of the data). Construction and analysis of such sequential Gaussianity tests is a natural and interesting direction. Of course universal inference is again a natural approach for this class, but it may be possible to take advantage of translation and rotation invariance of the null hypothesis (all Gaussians) using methods developed in~\cite{perez2022statistics}.

Regarding the ULR e-process based test for log-concavity, \S\ref{sec:sim} shows that the power of the fully nonparametric test can be quite limited particularly as the data dimension increases. This observation was also made in the non-sequential setting by \cite{dunn2021-logConcaveTesting}, who proposed using random one-dimensional projections as an interesting method to ameliorate this. In this test, rather than computing the full $d$-dimensional kernel and log-concave estimates, one projects the data onto many one-dimensional subspaces, and averages the e-values (nonnegative test statistics with expectation at most one under the null) that result from a one-dimensional test carried on each of these projected datasets. This approach not only has computational benefits due to the speed of one-dimensional density estimation methods, but also shows statistical benefits in the scenario of \S\ref{sec:sim}, in that the decay of power is considerably limited with dimension. Such projected tests are of course possible in the sequential setting as well, and are a natural next step to investigate, both methodologically and in terms of their theoretical properties.

On a broader scale, both the theoretical bounds and the simulations illustrate the critical role that the quality of the estimator $\mathscr{E}$ plays, both specifically in the power of the test for log-concavity, but also more generally in the use of the universal likelihood ratio e-process. With this in mind, and recalling the implicit `bias-variance' tradeoff in $\mathscr{E}$ as discussed in \S\ref{sec:power} and \S\ref{sec:sim}, investigating the choice of $\mathscr{E}$ relative to the null class is an interesting question both in terms of practical methodological concerns, as well as theoretical concerns studying the power of e-process based tests.

\subsection*{Acknowledgments}
The authors thank Martin Larsson for insightful discussions on fork convexity, and Robin Dunn, for an implementation for a batched universal test for log-concavity that formed the backbone of the code underlying our simulations. A.~Rinaldo and A.G.~were supported in part by the NSF grant DMS-EPSRC 2015489.

\printbibliography

\clearpage

\appendix

\section{Proof of Triviality and Properties of Fork-Convex Hulls}\label{appx:triviality_proof}

This appendix is devoted to showing the structural lemmata regarding fork-convex hulls, and discussing technical aspects of our arguments.

\subsection[Details on the Local Sequential Closure]{ Details on the Local $L_1(\mathtt{\Gamma})$ Closure}

Let us begin by explicitly detailing the notion of convergence implicit in closed fork-convex combinations.

Recall that the $\cfhull(\mathfrak{P})$ is the closure of of $\fhull(\mathfrak{P})$ with respect to $L_1(\mathtt{\Gamma})$-convergence of likelihood ratio processes at every fixed time $t$. Let us unpack this statement in simple terms. Let $\mathtt{P}^n$ be some sequence in $\fhull(\mathfrak{P})$ of density ratio $Z^n_t := Z^{\mathtt{P}^n}_t$. We say that $\mathtt{P}^n \to \mathtt{P}$ if for every $t$, it holds that $Z_t^n \to Z_t$ in $L_1(\mathtt{\Gamma})$. Since $Z_t$ and $Z_t^n$ are $\mathscr{F}_t$ measurable objects, this convergence is simply in $L_1( \mathtt\Gamma|_t).$ Stating that the convergence needs to happen at every fixed time $t$ means that this convergence need not be uniform in $t$: it is fine for $Z_{100}^n$ to converge more slowly than $Z_1^n,$ for instance. This notion of convergence may be metrised by \[ \Delta(\mathtt{P}, \mathtt{Q}) := \sum_{t \in \mathbb{N}} 2^{-t} \| \denp{P}_t - \denp{Q}_t\|_{L_1(\mathtt{\Gamma})}. \] We note that $\Delta$ is bounded, since \[ \|Z_t^{\mathtt{P}} - Z_t^{\mathtt{Q}}\|_{L_1(\mathtt{\Gamma})} = \int \Big| \mathtt{P}|_t(\mathrm{d}x_1^t) - \mathtt{Q}|_t(\mathrm{d}x_1^t) \Big| \le \int \mathtt{P}|_t(\mathrm{d}x_1^t) + \int \mathtt{Q}|_t(\mathrm{d}x_1^t) = 2.\]

With this in hand, we first show the following auxiliary claim that is repeatedly used.
\begin{mylem}\label{lem:cfhull_okay_to_fit_prefixes}
    Let $\mathfrak{P}$ be a set of sequential laws, and let $\mathtt{R}$ be any sequential law. Suppose there exists a sequence of sequential laws $\{\mathtt{R}^T\}$ such that each $\mathtt{R}^T \in \fhull(\mathfrak{P}),$ and for all $t \le T, Z_t^{\mathtt{R}^T} = \denp{R}_t.$ Then $\mathtt{R} \in \cfhull(\mathfrak{P})$.
    \begin{proof}
        We claim that $\mathtt{R}^T \to \mathtt{R}$. Indeed, since $Z_t^{\mathtt{R}^T} = \denp{R}_t$ for all $t \le T,$ \[ \Delta(\mathtt{R}^T, \mathtt{R}) \le \sum_{t > T} 2^{-t} \cdot 2 = 2^{-(T-1)}. \]
    Thus, $\lim_{T \to \infty} \Delta(\mathtt R^T,\mathtt R) = 0$, meaning $\mathtt{R}^T \to \mathtt{R}$. Since the closed fork-convex hull of $\mathfrak P$ includes such limits by definition, the claim is proved. 
    \qedhere
    \end{proof}
\end{mylem}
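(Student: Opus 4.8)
The plan is to verify directly that $\mathtt{R}$ lies in the local $L_1(\mathtt{\Gamma})$ closure of $\fhull(\mathfrak{P})$, using the metric $\Delta$ introduced just above. Since each $\mathtt{R}^T \in \fhull(\mathfrak{P})$ by hypothesis, it suffices to show that $\mathtt{R}^T \to \mathtt{R}$ in the sense that $Z_t^{\mathtt{R}^T} \to \denp{R}_t$ in $L_1(\mathtt{\Gamma})$ for every fixed $t$, and then to invoke the fact that $\cfhull(\mathfrak{P})$ contains all such limits by definition.

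The key observation is that the prefix-matching hypothesis makes this convergence essentially trivial: for any fixed $t$ and any $T \ge t$, we have $Z_t^{\mathtt{R}^T} = \denp{R}_t$ exactly, so the sequence $\{Z_t^{\mathtt{R}^T}\}_T$ is eventually constant and equal to its limit. Packaging this across all $t$ via $\Delta$, I would split $\Delta(\mathtt{R}^T, \mathtt{R}) = \sum_{t} 2^{-t}\|Z_t^{\mathtt{R}^T} - \denp{R}_t\|_{L_1(\mathtt{\Gamma})}$ into the terms with $t \le T$, which all vanish, and the terms with $t > T$. For the latter I would use the elementary bound $\|Z_t^{\mathtt{P}} - Z_t^{\mathtt{Q}}\|_{L_1(\mathtt{\Gamma})} \le 2$, valid for any two sequential laws since each $Z_t^{\mathtt{P}}$ is a probability density on $(\mathbb{R}^d)^t$ against $\mathtt{\Gamma}|_t$, to get $\Delta(\mathtt{R}^T, \mathtt{R}) \le 2\sum_{t > T} 2^{-t} = 2^{-(T-1)} \to 0$.

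There is essentially no hard step here — the only point requiring care is the uniform $L_1$ bound on differences of density processes, which follows from the triangle inequality together with $\int Z_t^{\mathtt{P}}\, \mathrm{d}\mathtt{\Gamma}|_t = 1$. One could equivalently phrase the argument without the metric, checking the defining condition of the closure (pointwise-in-$t$ convergence in $L_1(\mathtt{\Gamma})$) term by term; I would keep the metric formulation since it is already set up in the preceding discussion and makes the decay rate transparent, which is convenient for the repeated applications of this lemma later in the appendix.
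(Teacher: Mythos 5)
Your proposal is correct and follows essentially the same route as the paper's proof: both exploit that $Z_t^{\mathtt{R}^T} = \denp{R}_t$ exactly for $t \le T$, bound the remaining tail terms of $\Delta(\mathtt{R}^T,\mathtt{R})$ by $2 \cdot 2^{-t}$ to get $\Delta(\mathtt{R}^T,\mathtt{R}) \le 2^{-(T-1)} \to 0$, and then appeal to the definition of the closed fork-convex hull. No gaps.
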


The above lends significant convenience to our arguments, since it allows us to only construct processes matching some claimed member of the fork-convex hull up to finite times, which is typically easy to do in our arguments below using just finite fork-convex combinations. 

\subsection{Proofs about the Fork-Convex Hull of Independent Sequential Laws}\label{appx:triviality_proof_lemmas}

We may now proceed with the proofs of the Lemmata omitted from \S\ref{sec:triviality}.

\begin{proof}[Proof of Lemma~\ref{lem:cfhull_products}]
    As detailed in the main text, by taking repeated fork-convex combinations, it follows that $\mathtt{R}^T \in \fhull(\mathcal{P}^\infty),$ where \[ \mathtt{R}^1 := P_1^\infty, \mathtt{R}^T := \fccomb{\mathtt{R}^{T-1}}{P_T^\infty}{T-1}{0},\] where validity of the mixture weight $0$ exploits the mutual absolute continuity of laws in $\mathcal{P}$. We conclude by Lemma~\ref{lem:cfhull_okay_to_fit_prefixes}.
\end{proof}

\begin{proof}[Proof of Lemma~\ref{lem:cfhull_mixtures}]
    It suffices to show that for all finite $k, \mathcal{P}_k^\infty \subset \cfhull(\mathcal{P}^\infty),$ since $\mathcal{P}_* = \bigcup_{k} \mathcal{P}_k,$ and $\mathcal{P}_{k} \subset \mathcal{P}_{k+1}$ for all $k$. For $T \in \mathbb{N}$ and two laws $P,Q$ on $\mathbb{R}^d$, define the sequential law $\mathtt{R}_{P,Q,T}$ as the law of an independent sequence $\{X_t\}$ such that $X_t \sim P$ for $t \le T$ and $X_t \sim Q$ for $t > T$, i.e. $\mathtt{R}_{P,Q,T} = \fccomb{P^\infty}{Q^\infty}{T}{0}$. For $T \in \mathbb{N},$ define $\mathfrak{P}_{k,T}$ as the set of sequential laws of the form $\mathtt{R}_{P,Q,T}$ with $P \in \mathcal{P}_k$ and $Q \in \mathcal{P}$. 
    
    We first claim that $\mathfrak{P}_{k,T} \subset \fhull(P^\infty).$ 
    We show this inductively in $k$. Fix any $T$, and observe that trivially $\mathfrak{P}_{1,T}$ lies in this fork-convex hull. For $k \ge 2,$ we may represent each $P \in \mathcal{P}_k$ as $P = \alpha P^1 + (1-\alpha)P^2$ for some $\alpha \in [0,1], P^1 \in \mathcal{P}_{k-1}$ and $P^2 \in \mathcal{P}$. We need to show that for any such $P$, and any $Q \in \mathcal{P},$ $\mathtt{R}_{P,Q,T}$ lies in the fork-convex hull of $\mathcal{P}^\infty$. By the induction hypothesis, $\mathtt{R}_{P^1,Q,T} \in \fhull(\mathcal{P}^\infty),$ and $\mathtt{R}_{P^2, Q, T} \in \fhull(\mathcal{P}^\infty$).
        But then define the laws \[\mathtt{S}^0 := \mathtt{R}_{P^1, Q,T}, \widetilde{\mathtt{S}}^{\tau} := \fccomb{\mathtt{S}^{\tau - 1}}{\mathtt{R}_{P^2, Q, T}}{\tau-1}{\alpha}, \mathtt{S}^\tau := \fccomb{\widetilde{\mathtt{S}}^{\tau}}{\mathtt{R}_{P^1, Q,T}}{\tau}{0}.\]
        
    We note that every fork-convex combination above has valid weights since $\mathcal{P}$ is m.a.c., and so no density process is ever $0$. We claim that $\mathtt{S}^T = \mathtt{R}_{P,Q,T}$. 
    
    Indeed, let $p^1,p^2,q$ respectively denote the densities (with respect to the standard Gaussian) of $P^1, P^2$, and $Q$, and let $Z_t^1$ and $Z_t^2$ be the density processes of $\mathtt{R}_{P^1,Q,T}$ and $\mathtt{R}_{P^2,Q,T}$ respectively. These can be explicitly evaluated as \[ Z_t^i = \prod_{s \le \min(t,T)}p^i(X_s) \cdot \prod_{s = \min(t,T+1)}^t q(X_s),\] where $i \in \{1,2\},$ and we note that for $u < v, \prod_{s = v}^u \cdot = 1$. Observe that for each $i$, and any $t_1 < T,$ and $t > t_1,$ we have \[ \frac{Z_{t}^i}{Z_{t_1}^i} = \prod_{s = \min(t_1, T)+1}^{\min(t,T)} p^i(X_s) \cdot \prod_{s = \min(t,T+1)}^{t} q(X_s).\]
    
    We shall inductively claim that for each $\tau,$ the density process of $\mathtt{S}^\tau$ satisfies \[ \denp{S^\tau}_t = \prod_{s \le \min(t,\tau)} (\alpha p^1(X_s) + (1-\alpha)p^2(X_s)) \cdot \prod_{s = \min(t,\tau + 1)}^{\min(t,T)} p^1(X_s) \cdot \prod_{s = \min(t,T+1)}^t q(X_s).\] Indeed, the base claim is trivial since for $\tau = 0$ since $\mathtt{S}^0 = \mathtt{R}_{P^1,Q,T}$. Assuming the induction hypothesis for $\tau$, we observe that since $\tilde{S}^{\tau+1}$ is a fork-convex combination of $\mathtt{S}^\tau$ and $\mathtt{R}_{P^2,Q,T}$ at time $\tau,$ it shares the density process of $\mathtt{S}^\tau$ up to time $\tau$, while after that time the density is a mixture of the two density processes, giving \begin{align*} \denp{\tilde{S}^{\tau  + 1}}_t &= \prod_{s \le \min(t,\tau) } (\alpha p^1(X_s) + (1-\alpha) p^2(X_s)) \\ &\times  \left( \alpha \left\{ \prod_{s = \min(t,\tau+1)}^{\min(t,T)} p^1(X_s) \cdot \prod_{s = \min(t,T+1) }^t q(X_s)\right\} + (1-\alpha) \left\{\prod_{s = \min(t,\tau+1)}^{\min(t,T)} p^2(X_s) \cdot \prod_{s = \min(t,T+ 1)}^t q(X_s)\right\} \right),\end{align*} where we have used the behaviour of $Z^i_t/Z^i_{\tau}$ above for $t \ge \tau+1$. 

    Finally, $\mathtt{S}^{\tau+1}$ mixes the above with $\mathtt{R}_{P^1,Q,T}$ at time $\tau+1$ with a mixture weight of $0$. This means that the suffix law of $\mathtt{S}^{\tau + 1}$ beyond the time $\tau + 2$ is exactly equal to the law of $\mathtt{R}_{P^1,Q,T},$ while the prefix up to time $\tau+1$ is left alone. In other words, \begin{align*} \denp{S^\tau}_t &= \prod_{s \le \min(t,\tau)} (\alpha p^1(X_s) + (1-\alpha)p^2(X_s)) \cdot \prod_{s = \min(t,\tau+1)}^{\tau+1} (\alpha p^1(X_s) + (1-\alpha)p^2(X_s)) \\&\quad \times \prod_{s = \min(t,\tau + 2)}^{\min(t,T)} p^1(X_s) \cdot \prod_{s = \min(t,T+1)}^T q(X_s). \end{align*}
   
    The claim follows upon noticing that the first two products can be merged into $\prod_{s \le \min(t,\tau+1)} (\alpha p^1(X_s) + (1-\alpha)p^2(X_s).$

    With this in hand, the argument is straightforward. For any element  $\mathtt{P} \in \mathcal{P}_k^\infty,$ we note that there exists some member of $\mathfrak{P}_{k,T},$  say $\mathtt{P}^T$ such that the density process of $\mathtt{P}^T$ matches that of $\mathtt{P}$ up to time $T$. Applying Lemma~\ref{lem:cfhull_okay_to_fit_prefixes} immediately yields the claim.    
\end{proof}

\begin{proof}[Proof of Lemma~\ref{lem:cfhull_closures}]
    Let $\mathtt{P} = \bigotimes \{P_t\}$ for any arbitrary sequence of $P_t \in \overline{\mathcal{P}}$. We need to show that $\mathtt{P} \in \cfhull(\bigotimes \mathcal{P}).$ But, since $P_t \in \overline{\mathcal{P}}$ for each $t$, for each $t$ there further exist sequences $\{P_t^n\}_{n \in \mathbb{N}},$ with each $P_t^n \in \mathcal{P},$ such that $P_t^n \to P_t$ in $L_1(\Gamma)$. Let $\mathcal{Q} := \{P_t^n: t , n \in \mathbb{N}\}$. We note that $\bigotimes \mathcal{Q} \subset \bigotimes \mathcal{P} \implies \cfhull(\bigotimes \mathcal{Q}) \subset \cfhull(\bigotimes \mathcal{P}).$ Let $\mathfrak{Q}:= \cfhull(\bigotimes \mathcal{Q}).$ We shall argue that $\mathtt{P} \in \mathfrak{Q}.$  
    
    Let $\mathtt{P}^T$ be the sequential law with density process 
    \[Z_t^{\mathtt{P}^T} = \begin{cases} \prod_{s \le t} p_s(X_s) & \text{if } t \leq T\\
\prod_{s \le T} p_s(X_s) \cdot \prod_{T < s \le t} p_s^1(X_s) & \text{if } t > T \end{cases}.
    \]
    If we can show that for each $T$, $\mathtt{P}^T \in \mathfrak{Q},$ then the claim will follow, since $\mathtt{P}^T \to \bigotimes \{P_t\}$ as in the argument of Lemma~\ref{lem:cfhull_okay_to_fit_prefixes}, and since $\mathfrak{Q}$ is closed under the relevant notion of convergence.
        
        We shall show this inductively. Let $\mathtt{P}^{1,n}$ be a sequential law with density $Z^{1,n}_t := p_1^n(X_1) \cdot \prod_{s > \min(1,t)} p_s^1(X_s).$ Notice that $\mathtt{P}^{1,n} \in \bigotimes \mathcal{Q} \subset \mathfrak{Q}$ for every $n$. Further, \[ \Delta(\mathtt{P}^{1,n}, \mathtt{P}^1) \le \|Z_1^{1,n} - Z_1^{\mathtt{P}^1}\|_{L_1(\mathtt{\Gamma})} \to 0.\] Thus $\mathtt{P}^1 \in \mathfrak{Q}.$
        
        Now suppose that $\mathtt{P}^{T-1} \in \mathfrak{Q}$ for some $T \ge 2.$ For $T, n\in \mathbb{N},$ define $\mathtt{Q}^{n}$ as the sequential law of density ratio \[ Z_t^{\mathtt{Q}^{n}} := \begin{cases} \prod_{s < t} p_s^1(X_s)  & t < T \\ Z_{T-1}^{\mathtt{Q}^{n}} \cdot p_T^n(X_T) & t = T \\ Z_{T}^{\mathtt{Q}^{n}} \cdot \prod_{s = T+1}^t p_s^1(X_s) & t > T\end{cases}.\] It trivially follows that $\mathtt{Q}^n \in \bigotimes \mathcal{Q} \subset \mathfrak{Q}$ for all $n$. Now, define \[ \mathtt{P}^{T,n} = \fccomb{\mathtt{P}^{T-1}}{\mathtt{Q}^{n}}{T-1}{0},\] which is valid since each $P_t^n$ and $P_t$ has are mutually absolutely continuous. But $Z_t^{\mathtt{P}^{T,n}} = Z_t^{\mathtt{P}^{T-1}} = Z_t^{\mathtt{P}^T}$ for $t \le T-1,$ and for $t \ge T,$ \[ {Z}_t^{\mathtt{P}^{T,n}} - Z_t^{\mathtt{P}^T} = Z_{T-1}^{\mathtt{P}^T} \cdot ( p_T^n(X_T) - p_T(X_t)) \cdot \prod_{s = T+1}^t p_s(X_s).\]
        
        It follows that \[ \|Z_t^{\mathtt{P}^{T,n}} - Z_t^{\mathtt{P}^T}\| = \begin{cases} 0 & t < T \\ \|P_T^n - P_T\|_{L_1(\Gamma)} & t \ge T\end{cases},\] and therefore, $\Delta(\mathtt{P}^{T,n}, \mathtt{P}^T) \le \|P_T^n - P_T\|_{L_1(\Gamma)} \to 0.$ By closeness of $\mathfrak{Q},$ we conclude that $\mathtt{P}^T \in \mathfrak{Q}$.
\end{proof}

\subsection{Proof of Lemma~\ref{lem:find_nice_rectangle}}\label{appx:nice_rectangle}

\begin{proof}[Proof of Lemma~\ref{lem:find_nice_rectangle}]
    Fix an $m\in \mathbb{N}$. Since $E$ has positive mass and is measurable, there exists an open set $O \in (\mathbb{R}^d)^t$ such that $O \supset E$ and $\leb_{dt}(O) \le (1 + 1/m) \leb_{dt}(E).$ Observe here that `most' of the mass of $O$ lies within $E$.

    Since $O$ is open, there exists a sequence of disjoint open rectangles $R_i$ in $(\mathbb{R}^d)^t$ such that $\bigsqcup R_i \subset O \subset \bigcup \overline{R}_i$, and \[ \leb_{dt}\left( \bigsqcup R_i\right) = \sum \leb_{dt}(R_i) = \leb_{dt}(O). \] Further, since most of the mass of $O$ lies in $E,$ we conclude that there exists at least one $i$ such that \[ \leb_{dt}(R_i) > 0 \quad \textit{and} \quad \leb_{dt}(E \cap R_i) \ge \frac{m}{m+1} \leb_{dt}(R_i).\] Indeed, otherwise we would have \[ \leb_{dt}(E) = \leb_{dt}(E \cap O) = \sum \leb_{dt}(E \cap R_i) < \frac{m}{m+1} \sum \leb_{dt}(R_i) \leq \frac{m}{m+1} \cdot \frac{m+1}{m} \leb_{dt}(E), \] which is impossible. \qedhere %

\end{proof}

\subsection{Technical Aspects of Fork-Convex Hulls and Our Triviality Argument}

We comment on some technical aspects of the argument underlying the non-existence of nontrivial NSMs. Specifically, we discuss the necessity of our definition of nontriviality, and the m.a.c.~condition repeatedly used in the argument, how the argument can be extended to consider log-concave laws over bounded sets, and finally issues that arise when one tries to relax the definition of fork-convex combinations to handle support mismatch. %

\paragraph{Going beyond almost sure triviality.} The main text defines trivial NSMs (and NMs) as those that are $\mathtt{\Gamma}$-almost surely non-increasing (respectively, constant). Could one instead show that there are no nontrivial $\lciidseq$-NSMs in the stronger sense that such processes must be non-increasing (as opposed to only \emph{almost surely} non-increasing)?  This turns out to be impossible, as witnessed by the following process
 \[ M_t := \frac{1}{1 - \indi\{\exists (t_1, t_2, t_3, t_4) \in [1:t]^4 : X_{t_1} = X_{t_2}, X_{t_3} = X_{t_4}, X_{t_1} \neq X_{t_3}\}}.\] Since log-concave measures can have at most one atom (due to unimodality), it follows that $\{M_t\}$ is an $\lciidseq$-martingale (indeed, it is almost surely just a constant $1,$ as stated by the theorem). However, $M_t$ does diverge to $\infty,$ and this occurs almost surely against any i.i.d.~sequential law which has at least two atoms, for instance, a coin flip process. This means that while it may not be possible to reject processes with a Lebesgue density using test martingales, it \emph{is} possible to reject atomic processes. Structurally, this example has to do with the fact that one cannot approach point masses in an $L_1$ sense using measures with density. Therefore, although $\lciidseq$-NSMs must also be NSMs for independent processes with densities, this does not extend to sequences drawn from distributions with atoms. In another sense, this issue is the same as the problem discussed below regarding loss of the NSM property under extensions of fork-convex combinations of laws with support mismatch, in that two laws with distinct single atoms each have parts that are mutually singular.

\paragraph{The role of the mutual absolute continuity condition on $\mathcal{P}$.} The definition of fork-convex combinations of two laws $\mathtt{P}$ and $\mathtt{Q}$ at time $s$ involves the ratio of density processes $\denp{P}_s/\denp{Q}_s$. This ratio must indeed appear, as can be seen from the algorithmic viewpoint of \S\ref{sec:triviality} to account for the fact that if $\mathtt{R}$ is the fork-convex combination, then the prefix law $\mathtt{R}|_s = \mathtt{P}|_s.$ However, if $\denp{Q}_s = 0,$ i.e. if for $\{X_t\} \sim \mathtt{R},$ the prefix $X_1^s$ lies in a set that is almost surely impossible under $\mathtt{Q},$ then the above ratio is meaningless. This observation underlies the condition that if $\denp{Q}_s = 0,$ then the mixture weight $h$ must be exactly $1$. 

Our argument ultimately asserts that any law of the form $\bigotimes \{P_t\}$ lies in $\cfhull(\mathcal{P}^\infty)$. However, our constructions to demonstrate this fact rely on setting $h = 0$ in order to generate switches between different laws in $\mathcal{P}$. Our assumption of mutual absolute continuity is to enable precisely this flexibility without running into the issue discussed in the previous paragraph. %

\paragraph{The role of Gaussians in our argument.} Since we used the density of the Gaussians in order to show that $\lciidseq$-NSMs must also be $\bigotimes \mathcal{D}$-NSMs, it behooves us to ask how essential $\lc \supset \mathcal{G}$ is to the main point of the result.\footnote{Notwithstanding that the result is interesting in its own right for Gaussians, which tells us that there is a simple, and very natural, parametric family that cannot be tested via nonnegative supermartingales.} In the argument, Gaussians play two roles: firstly, since all Gaussians are supported on the entirety of the domain, this class is m.a.c., and we can flexibly take fork-convex combinations. Secondly, the triviality of Gaussian NSMs follows since mixtures of Gaussians are $L_1$-dense in the set of densities on the reals. Any subset of $\lc$ that satisfies these two properties will suffice for our purposes. 

\paragraph{Extending the argument to log-concave laws on subsets of $\mathbb{R}^d$.} We finally observe that our argument extends in a straightforward manner to log-concave laws on restricted subsets of the reals: for a bounded convex set $K$, define $\lc_K$ to be log-concave densities supported on $K$. Then all $\lc_K^\infty$-NSMs are trivial, in the sense that they are almost surely nonincreasing with respect to the reference measure $(\mathrm{Unif}(K))^\infty.$ This follows because truncated Gaussians are again dense and supported on the entirety of the domain $K$. 

To see this, first observe that if $\gamma := \sum \alpha_i \phi_i$ is a mixture of Gaussians, then for any $K$ of nonzero Lebesgue mass, the truncation $\gamma|_K$ is also a mixture of truncated Gaussians. Indeed, define $\theta_i = \int_K \phi_i.$ Then \[ \gamma|_K(x) = \sum \frac{\alpha_i}{\sum \alpha_i \theta_i} \phi_i(x) \cdot \mathbf{1}\{x \in K\}  = \sum \frac{\alpha_i \theta_i}{\sum \alpha_i\theta_i} \phi_i|_K(x).\]
Now, let $p$ be any density supported on $K$, and let $\gamma_n \to p$ be a sequence of mixtures of Gaussians converging so that $d_n := \int |p -\gamma_n| \to 0.$ Then, defining $\pi_n = \int_{K^c} \gamma_n,$ we have \[  \int |p - \gamma_n|_K| = \int_K \frac{| p(1-\pi_n) - \gamma_n|}{1-\pi_n} \le \int_K \frac{|p-\gamma_n|}{1-\pi_n} + \int_K \frac{\pi_n p}{1-\pi_n} \le \frac{\pi_n + \int |p - \gamma_n|}{1-\pi_n}.\] Further, since $p$ is supported on $K$, $\pi_n = \int_{K^c} \gamma_n \le \int_{K^c} \gamma_n + \int_K |p - \gamma_n| = d_n$. Therefore, \[ \mathrm{TV}(p,\gamma_n|_K) \le \frac{2d_n}{1-d_n} \to 0.\] 

But this means that we can run the entire argument of \S\ref{sec:triviality} but with Gaussians truncated over $K,$ and draw the same conclusion.

\paragraph{Can we extend nontrivial fork-convex combinations to all laws?} As we discussed above, due to the ``$\denp{Q}_T = 0 \implies h = 1$'' condition in the definition of fork-convex combinations, it is not possible to take arbitrary fork-convex combinations between sequential laws. In the extreme case of $\mathtt{P} = P^\infty$ and $\mathtt{Q} = Q^\infty$ for $P,Q$ that have disjoint support, the only possible fork-convex combinations are mixtures of the form $\alpha P^\infty + (1-\alpha)Q^\infty$. 
While this technicality did not pose a serious issue for the current paper, this situation is quite unsatisfying in general. After all, the algorithmic view of fork-convex combinations is very natural, and extends to such disjoint support situations easily. 

One can formalise this algorithmic picture by exploiting conditional densities. For a sequence of (appropriately measurable) maps $k^{\mathtt{P}}_t : (\mathbb{R}^d)^{t-1} \times \mathbb{R}^d \to \mathbb{R}_{\ge 0},$ denoted $k_t^{\mathtt{P}}(x_t|x_1^{t-1}),$ we say that $\{k_t^{\mathtt{P}}\}$ is the conditional density process of $\mathtt{P}$ if for each $x_1^{t-1}, k_t(\cdot|x_1^{t-1})$ is a density with respect to $\Gamma$, and for any $t, A \in \mathscr{F}_t,$ 
\[ 
\mathtt{P}(X_1^t \in A) = \int_A \prod_{s \le t} k_s(x_s|x_1^{s-1}) \mathtt{\Gamma}(\mathrm{d}x_1^t).
\] 
More generally, we can define a similar notion via Markov kernels. We observe that, by definition, it holds that if $\mathtt{P}$ has a conditional density process, then for any $t$ and $\mathtt{\Gamma}$-almost every $x_1^t$ that \[\denp{P}_t(x_1^t) = \prod_{s \le t} k_s(x_s|x_1^{s-1}).\] 

Using the above characterisation, we can give the following natural extended definition of fork-convex combinations: for two sequential laws $\mathtt{P}, \mathtt{Q}$ with conditional density processes $\{k_t^{\mathtt{P}}\}, \{k_t^{\mathtt{Q}}\}$ respectively, a law $\mathtt{R}$ is a fork-convex combination of $\mathtt{P}$ and $\mathtt{Q}$ at time $T$ with $\mathscr{F}_T$-measurable weight $h$ if \begin{align}\label{eq:extended_fc_comb_def} \denp{R}_t = \begin{cases} \prod_{s\le t} k_s^{\mathtt{P}}(x_s|x_1^{s-1}) & t \le T \\ \prod_{s \le T} k_s^{\mathtt{P}}(x_s|x_1^{s-1}) \cdot \left( h\prod_{s = T+1}^t k_s^{\mathtt{P}}(x_s|x_1^{s-1}) + (1-h) \prod_{s = T+1}^t k_s^{\mathtt{Q}}(x_s|x_1^{s-1}) \right) & t >T \end{cases},\end{align} the difference being that we now \emph{do not impose the restriction} that $h = 1$ if $\denp{Q}_T = 0.$ Simplistically, this is possible since we are never dividing by the potentially null $\denp{Q}_T$, and more formally, this is considering the formal ratio $\denp{Q}_t/\denp{Q}_T,$ which is interpreted in the natural way as $\prod k_s^{\mathtt{Q}}(x_s|x_1^{s-1}).$ The above extended definition genealizes our previous definition of fork-convex combinations, and we can extend the same to the fork-convex hull and its closure.

While the density process above is a perfectly sound mathematical object, such an extension is not fruitful because of a failure to preserve the NSM property under these extended fork-convex combinations in general.

To illustrate why the above extended definition fails to maintain the NSM property (unlike the restricted one used in the paper), consider the following example.
\begin{example}
$\mathtt{P} = (\mathrm{Unif}(0,1))^\infty$ and $\mathtt{Q} = ( \mathrm{Unif}(1,2))^\infty,$ and the process \[ M_t := \begin{cases}
    2 & \exists s_1 ,s_2 \le  t: X_{s_1} \in (0,1), X_{s_2} \in (1,2) \\
    1 & \textrm{ otherwise }
\end{cases}. \] This process is an NSM (indeed, a martingale) under both $\mathtt{P}, \mathtt{Q}.$ However, under any nontrivial fork-convex combination of these two laws, this process must start at $1$, and with positive probability grow to $2$ but never fall, and thus cannot be a supermartingale. 
\end{example}

Under the hood, the issue in the example above arises due to the fact that under the extended definition, for $t \ge T+1,$ $\{\denp{R}_t > 0\} = \{\denp{P}_t > 0 \} \cup \{ \denp{P}_T > 0 , \prod_{T+1}^t k_s(X_s|X_1^{s-1}) > 0\},$ but the NSM property of $\{M_t\}$ under $\mathtt{P}$ or $\mathtt{Q}$ only controls the conditional expectations of $M_t\denp{P}_t\indi\{\denp{P}_t > 0\}$ and $M_t\denp{Q}_t\indi\{\denp{Q}_t > 0\}$ under $\mathtt{\Gamma},$ which leaves the conditional behaviour of $M_t \denp{R}_t$ uncontrolled when $\mathtt{R}$ places mass on events that are null under one of these laws.

It should be noted that in the above example there \emph{is} a version of the process $\{M_t\}$, i.e., a process $\{\widetilde{M}_t\}$ such that $\mathtt{P}(\forall t, M_t = \widetilde{M}_t) = \mathtt{Q}(\forall t, M_t = \widetilde{M}_t) = 1,$ but such that $\{\widetilde{M}_t\}$ is a martingale even under extended fork-convex combinations. Concretely this process is just $\widetilde{M}_t = 1.$ One may thus wonder if this phenomenon holds true in greater in generality: is it the case that if $\{M_t\}$ is an NSM under $\mathtt{P}$ and $\mathtt{Q},$ then there is a version $\{\widetilde{M}_t\}$ of it (under $\mathtt{P}$ and $\mathtt{Q}$) such that $\{\widetilde{M}_t\}$ is an NSM against any extended fork-convex combination of $\mathtt{P}$ and $\mathtt{Q},$ without the restriction ``$\denp{Q}_T = 0 \implies h = 1$''? This turns out also to be impossible in general, as demonstrated by the following example.

\begin{example}
Let $\mathtt{P} = \mathrm{Unif}(0,1)^\infty$ and $\mathtt{Q} = \mathrm{Unif}(0,1/2)^\infty$. Define $\rho_t = \indi\{X_t \in (0,1/2)\}$ for $t \ge 1,$ and $\rho_0 = 0$. Let $\{N_t\}$ be an adapted process such that \[ N_t = \begin{cases} 1 & \rho_{t-1} = 1\\ 3/2 & \rho_{t-1} = 0, \rho_t = 1\\ 1/2 & \rho_{t-1} = 0, \rho_t = 0\end{cases}.\] Finally define $M_t = \prod_{s \le t} N_t$. It is easy to check that $M_t$ is an NM under both $\mathtt{P}$ and $\mathtt{Q}$. 

Now suppose $\mathtt{R}$ is an extended fork-convex combination of $\mathtt{P},\mathtt{Q}$ at time $T \ge 1$ with mixture weight $h < 1.$ This means that with probability $1-h,$ it holds that $X_t \in (0,1/2)$ with certainty for all $t \ge T+1$. As as result, we can explicitly compute that \begin{align*} \mathbb{E}[N_{T+1}|\mathscr{F}_T] = \rho_T + (1-\rho_T) \left( (1-h) \cdot \nicefrac{3}{2} + h ( \nicefrac{1}{2} \cdot \nicefrac{3}{2} + \nicefrac{1}2 \cdot \nicefrac{3}{2} ) \right) = \begin{cases} 1 & \rho_T = 1 \\ 1 + (1-h)/2 & \rho_T = 0\end{cases} ,\end{align*} and so as long as $h < 1,$ $\mathbb{E}[N_{T+1}|\mathscr{F}_T] > 1$ if $\rho_T = 0,$ and therefore $\{M_t\}$ violates the NSM property under $\mathtt{R}$ at the time $T+1$. Note here that it is hard to construct any nontrivially different version of the above process since the law of $\mathtt{P}$ dominates that of $\mathtt{Q}$. 
\end{example}

In light of the above discussion, generalised definitions of fork-convex combinations are at loggerheads with maintaining the NSM property these combinations. Of course, since our purpose in using fork-convexity is to assert the triviality of NSMs over large classes of sequential laws, this latter property is essential to maintain for such statistical applications. At the same time, while the restricted original definition does maintain the NSM property, the included restriction is unsatisfying, and in conflict with the algorithmic intuition underlying the idea of these combinations. Finding an appropriate generalised definition of fork-convex combinations that abstains from imposing these support conditions, but nevertheless retains NSM closure under the NSM property is an interesting, and challenging, question left for future work.

\section{Proofs of Consistency and Power Analysis}

Recall the notation $\sigma_t(P) := \sum_{s \le t} \log p(X_s) - \log \hat{p}_t(X_s)$. The main arguments of this section control the behavious of $\sigma_t(P),$ in particular arguing that if the Hellinger distance of $P$ from log-concavity is large, then $\sigma_t(P)$ must eventually grow linearly. We show this in asymptotic and nonasymptotic regimes in \S\ref{appx:consistency_proof} and \S\ref{appx:power_analysis_proofs} respectively.

Corollary~\ref{prop:consistency_box} and Corollary~\ref{thm:rate_for_box} each relies on further control on the behaviour of $\rho_t(\mathscr{E};p) = \sum_{s \le t} \log p(X_s) - \log\hat{q}_{s-1}(X_s)$ when $p$ is a bounded Lipschitz law on the unit box. This argument is left to \S\ref{appx:bounded_unit_box_regret}.

\subsection{Proof of Consistency}\label{appx:consistency_proof}

Our arguments rely on the following bracketing tail estimate, developed by Wong and Shen to analyse the behaviour of sieve-based maximum likelihood estimates \cite[Thm.~1]{wong1995probability}. The estimate involves the bracketing entropy of a class of laws $\mathcal{Q}$ under the Hellinger metric. We refer the reader to the text of Van der Vaart and Wellner \cite{vaart1996weak} for a thorough introduction, and give a brief account.

A bracket $[u,v]$ is defined by two functions $u(x) , v(x)$ such that $u(x) \le v(x)$ for all $x$, and consists of the set of all functions $f$ such that $u(x) \le f(x) \le v(x)$ for all $x$. Since we shall only be interested in functions that are densities, we may restrict attention to nonnegative functions. The Hellinger size of such a bracket $[u,v]$ is defined as $|[u,v]| = \|\sqrt{u} - \sqrt{v}\|_2/2.$ 
We say that a class of distributions $\mathcal{Q}$ is bracketed by $\{[u_i, v_i]\}_{i = 1}^N$ if for all $Q \in \mathcal{Q}$, there exists an $i$ such that $q \in [u_i, v_i],$ where recall that for a distribution $Q$, we denote its density by $q$. Note that this bracketing is typically ``improper'', i.e., $u_i, v_i$ will generally not lie in $\mathcal{Q}$ (because $q$ integrates to one, and so its lower bracket $u$ will integrate to less than one, and its upper bracket $v$ will integrate to more than one). The Hellinger bracketing entropy of $\mathcal{Q}$ at scale $\zeta$ is the logarithm of the most parsimonious bracketing of $\mathcal{Q}$ by brackets of size at most $\zeta,$ i.e. 
\[
\mathcal{H}_{[]}(\mathcal{Q}, \zeta) := \inf \{ \log N : \text{$\mathcal Q$ has an $N$-sized Hellinger bracketing at scale $\zeta$} \}
\]
Note, of course, that bracketing entropies are nonincreasing in $\zeta$. 

\begin{mylem}\label{lem:wong_shen}
    \emph{(Simplification of \cite[Thm.~1]{wong1995probability})} For a class of distributions $\mathcal{Q}$ and a natural number $t$, define $\varepsilon_t$ as the smaller number $\varepsilon$ such that \[ \int_{\epsilon^2/2^8}^{\sqrt{2}\epsilon} \sqrt{\mathcal{H}_{[]}(\mathcal{Q}, \zeta/10)}\mathrm{d}\zeta \le 2^{-11} \sqrt{t} \epsilon^2.\]
        For every $t$ and  $\epsilon \ge \epsilon_t,$ it holds that for any law $P$ such that $d_H(P,\mathcal{Q}) \ge \epsilon,$ we have \[ P^{\otimes t} \left( \inf_{q \in \mathcal{Q}} \sum_{s \le t} \log p(X_s) - \log q(X_s) \le t\epsilon^2/24 \right) \le 4 \exp{ - C t\epsilon^2},\] where $C > 2^{-14}$ is a constant.    
\end{mylem}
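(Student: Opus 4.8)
The plan is to obtain Lemma~\ref{lem:wong_shen} as a direct specialization of Theorem~1 of Wong and Shen \cite{wong1995probability} (their basic probability inequality for likelihood ratios), so that almost all of the work is in verifying that their hypotheses hold in our misspecified setting and in matching the numerical constants. Their theorem fixes a sampling density $f_0$, a class of densities $\mathcal{F}$, and a sample size $n$, and asserts: if the Hellinger bracketing entropy of $\mathcal{F}$ satisfies an entropy-integral condition at scale $\varepsilon$ (lower limit proportional to $\varepsilon^2$, upper limit proportional to $\varepsilon$, right-hand side proportional to $\sqrt n\,\varepsilon^2$), then with probability at least $1-4e^{-c'n\varepsilon^2}$ one has $\prod_{i\le n}f(X_i)/f_0(X_i)\le e^{-cn\varepsilon^2}$ simultaneously over all $f\in\mathcal{F}$ with $d_H(f,f_0)\ge\varepsilon$. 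First I would instantiate this with $n=t$, $f_0=p$, and $\mathcal{F}=\mathcal{Q}$; by construction $\varepsilon_t$ is the smallest $\varepsilon$ at which the relevant entropy-integral condition holds, and (as in \cite{wong1995probability}) the condition continues to hold for all $\varepsilon\ge\varepsilon_t$, so the theorem is available for every such $\varepsilon$.

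Two observations then finish the reduction. First, Wong and Shen never use $f_0\in\mathcal{F}$: their argument invokes only $\mathbb{E}_{f_0}[f/f_0]=1$ and the Hellinger moment bound $\mathbb{E}_{f_0}[\sqrt{f/f_0}]=\int\sqrt{f f_0}=1-2d_H^2(f,f_0)\le e^{-2d_H^2(f,f_0)}$ (with the normalization $d_H(f,g)=\tfrac12\|\sqrt f-\sqrt g\|_2$ used in this section), together with a peeling of $\mathcal{F}$ into Hellinger shells and bracketing-entropy chaining inside each shell; none of this requires $p$ to be log-concave or to belong to $\mathcal{Q}$, so the inequality holds verbatim when $p\notin\mathcal{Q}$. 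Second, the hypothesis $d_H(P,\mathcal{Q})=\inf_{q\in\mathcal{Q}}d_H(p,q)\ge\varepsilon$ forces \emph{every} $q\in\mathcal{Q}$ to satisfy $d_H(p,q)\ge\varepsilon$, so the Hellinger-separated set $\{q\in\mathcal{Q}: d_H(p,q)\ge\varepsilon\}$ over which Wong and Shen take their supremum is all of $\mathcal{Q}$. Hence their exceptional event $\{\sup_{q\in\mathcal{Q}}\prod_{s\le t}q(X_s)/p(X_s)\ge e^{-ct\varepsilon^2}\}$ is exactly $\{\inf_{q\in\mathcal{Q}}\sum_{s\le t}(\log p(X_s)-\log q(X_s))\le ct\varepsilon^2\}$, which is the event in the statement once $c$ is identified with $1/24$.

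What remains is pure bookkeeping of constants, and that is the only real obstacle. I would transcribe Wong and Shen's entropy-integral hypothesis into the concrete form stated here --- lower limit $\varepsilon^2/2^8$, inner rescaling of the bracketing scale by $1/10$, prefactor $2^{-11}$ on the right --- and their conclusion into $4\exp(-Ct\varepsilon^2)$ with $C>2^{-14}$ and the threshold $t\varepsilon^2/24$, confirming that these explicit powers of two and the fraction $1/24$ are valid (indeed conservative) choices. The subtle point is the factor-of-two discrepancy between the common Hellinger convention $\rho(f,g)=\|\sqrt f-\sqrt g\|_2$ and the one used here, $d_H=\rho/2$: substituting $\rho=2d_H$ rescales $\varepsilon$ inside their statement, and one has to check that the slack in the constants absorbs this rescaling so that the resulting bound is at least as strong as claimed. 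Beyond this arithmetic, and a routine verification that the stated $\varepsilon_t$ is well defined, nothing else is needed; I would cite \cite{wong1995probability}, together with the exposition in \cite{vaart1996weak}, for the chaining computations themselves.
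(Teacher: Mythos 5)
Your proposal matches the paper's treatment exactly: the paper offers no independent proof of this lemma, presenting it purely as a specialization of Wong and Shen's Theorem~1, and the two substantive points you identify (that their argument uses only $\mathbb{E}_{f_0}[f/f_0]\le 1$ and the Hellinger affinity bound, so it applies off-the-model, and that $d_H(P,\mathcal{Q})\ge\epsilon$ makes the separated shell all of $\mathcal{Q}$) are precisely what makes that specialization legitimate. The remaining constant bookkeeping, including the Hellinger normalization, is handled conservatively by the stated powers of two, so nothing further is needed.
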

Informally, if $P$ is far enough from $\mathcal Q$ in the Hellinger metric (where far enough is determined by the bracketing entropy of $\mathcal Q$), then it is exponentially unlikely (in the sample size) for the maximum log-likelihood under $\mathcal Q$ to be linearly close to the log-likelihood under $P$. 
Exploiting this observation in our context requires us to argue that eventually, the log-concave MLE $\hat{p}_t$ must lie in a set with small entropy. To this end, we appeal to the following result due to Dunn et al., which extends the convergence analysis of Cule \& Samworth \cite{cule2010theoretical}.
\begin{mylem}\label{lem:small_brack}
    \emph{\cite[Lem.~1]{dunn2021-logConcaveTesting}} Consider any distribution $P \in \mathcal{D}_1$, not necessarily log-concave. For any $\eta > 0,$ there exists a bracket $[u_\eta, v_\eta]$ of size at most $\eta$ that contains the log-concave projection $\lcproj_P$, and eventually also contains the log-concave MLE $\hat{p}_t$: $P^\infty(\exists t_0: \forall t \ge t_0, \hat{p}_t \in [u_\eta, v_\eta]) = 1.$    
\end{mylem}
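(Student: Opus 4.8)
The plan (following Dunn et al. \cite{dunn2021-logConcaveTesting}) is to build the single bracket $[u_\eta,v_\eta]$ explicitly around $\lcproj_p$, using the strong almost-sure convergence $\hat p_t \to \lcproj_p$ of Cule and Samworth \cite{cule2010theoretical} --- namely $\int e^{a\|x\|}|\hat p_t - \lcproj_p|\,\mathrm dx \to 0$ a.s., together with its companion, uniform convergence of $\hat p_t$ to $\lcproj_p$ on compact subsets of the interior of $\mathrm{supp}(\lcproj_P)$ --- and three structural facts about log-concave densities. Write $C := \overline{\mathrm{conv}(\mathrm{supp}\,P)}$. First, it is standard that $\lcproj_P$ and each MLE $\hat p_t$ are supported on the convex hull of the data, so a.s. (since $X_i \in \mathrm{supp}\,P$ a.s.) $\mathrm{supp}(\hat p_t)\subseteq C = \mathrm{supp}(\lcproj_P)$ for every $t$. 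Second, a uniform exponential envelope: because the mode values $\|\hat p_t\|_\infty$ and the first moments of $\hat p_t$ converge to those of $\lcproj_p$, the standard envelope estimate for log-concave densities yields constants $\alpha,\beta>0$ and an a.s.-finite random time $t_1$ with $\hat p_t(x)\le E(x):=e^{\alpha-\beta\|x\|}$ for all $t\ge t_1$ and all $x$, and likewise $\lcproj_p\le E$; note $E$ is integrable. Third, $\lcproj_p$ is bounded and continuous on $\mathrm{int}(C)$, so $\lcproj_p\pm\epsilon$ are legitimate bracketing functions there.

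With these in hand, I would fix (in this order) a large radius $M$, a thin boundary width $\delta$, and a small height $\epsilon_1$, and set $K:=\{\|x\|\le M\}$ and $\mathrm{core}:=\{x\in K\cap C:\mathrm{dist}(x,\partial C)\ge\delta\}$, a compact subset of $\mathrm{int}(C)$. Define $v_\eta:=\lcproj_p+\epsilon_1$ on $\mathrm{core}$, $v_\eta:=E$ on $C\setminus\mathrm{core}$, and $v_\eta:=0$ off $C$; and $u_\eta:=(\lcproj_p-\epsilon_1)_+$ on $\mathrm{core}$ with $u_\eta:=0$ elsewhere. By construction $u_\eta\le\lcproj_p\le v_\eta$, so $\lcproj_P\in[u_\eta,v_\eta]$. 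For the eventual containment of $\hat p_t$: on $\mathrm{core}$, uniform convergence gives $\|\hat p_t-\lcproj_p\|_{\infty,\mathrm{core}}\le\epsilon_1$ for all $t$ past some a.s.-finite $t_2$, hence $u_\eta\le\hat p_t\le v_\eta$ there; on $C\setminus\mathrm{core}$ we have $0\le\hat p_t\le E$ for $t\ge t_1$; and off $C$, $\hat p_t=0$ a.s. Intersecting these a.s.-eventual events gives $P^\infty(\exists t_0\,\forall t\ge t_0:\hat p_t\in[u_\eta,v_\eta])=1$.

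It remains to bound the Hellinger size. Using $(\sqrt{v_\eta}-\sqrt{u_\eta})^2\le v_\eta-u_\eta$, split $\int(v_\eta-u_\eta)$ over $\mathrm{core}$, the boundary layer $(K\cap C)\setminus\mathrm{core}$, the far region $C\setminus K$, and $C^c$: the first is at most $2\epsilon_1\,\mathrm{Leb}(K)=O(\epsilon_1 M^d)$; the second is at most $2\int_{(K\cap C)\setminus\mathrm{core}}E\le 2e^\alpha\,\mathrm{Leb}\big(\{x\in K:\mathrm{dist}(x,\partial C)<\delta\}\big)=O(\delta)$, since the $\delta$-neighbourhood of a convex boundary inside a fixed ball has Lebesgue measure $O(\delta)$; the third is $2\int_{\|x\|>M}E\to0$ as $M\to\infty$ by integrability of $E$; and the last is $0$. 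Choosing first $M$, then $\delta$, then $\epsilon_1$ to make each term at most a fixed constant times $\eta^2$ gives $\|\sqrt{u_\eta}-\sqrt{v_\eta}\|_2/2\le\eta$, as required.

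The main obstacle is stitching the two modes of control on $\hat p_t$ into one bracket: the uniform exponential envelope handles the tails, and uniform convergence handles compacts inside $\mathrm{int}(C)$, but across $\partial C$ the projection $\lcproj_p$ may be discontinuous and uniform convergence of $\hat p_t$ can fail. The device of absorbing a thin, Lebesgue-small boundary layer (and the entire exterior) into the crude envelope bound --- paying only an $O(\delta)$ and $O(\int_{\|x\|>M}E)$ Hellinger contribution --- is precisely what resolves this, and the genuinely substantive inputs are the two quantitative facts one must extract from \cite{cule2010theoretical}: the uniform exponential envelope for log-concave densities converging in the weighted $L^1$ sense, and $\mathrm{supp}(\hat p_t)\subseteq\mathrm{supp}(\lcproj_P)$. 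In the special case $\mathrm{supp}(\lcproj_P)=\mathbb R^d$ (e.g.\ Gaussian $P$, which is the case needed for Lemma~\ref{lem:growth_of_sigma_t}), $\lcproj_p$ is continuous everywhere, the boundary layer is absent, and the construction reduces to $[u_\eta,v_\eta]=[\,(\lcproj_p-\epsilon_1)_+\mathbf 1_K\,,\ \min(\lcproj_p+\epsilon_1,E)\,]$ up to the tail, which is immediate.
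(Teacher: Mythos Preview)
The paper does not give its own proof of this lemma; it is simply cited as \cite[Lem.~1]{dunn2021-logConcaveTesting} and used as a black box, so there is no in-paper argument to compare against. Your reconstruction is faithful to the Dunn et al.\ argument and is sound: the three inputs you isolate --- the eventual uniform exponential envelope for $\hat p_t$, uniform convergence of $\hat p_t$ to $\lcproj_p$ on compacta inside $\mathrm{int}(C)$, and the support containment $\mathrm{supp}(\hat p_t)\subseteq C$ --- are exactly what is needed, and the core/boundary-layer/tail decomposition with the crude envelope bound absorbed on the non-core pieces is the correct device for stitching them into a single bracket of controlled Hellinger size. One small remark: the uniform-on-compacts convergence is not stated verbatim in \cite{cule2010theoretical}; it follows from their weighted-$L^1$ convergence together with the standard fact that $L^1$ convergence of log-concave densities to a log-concave limit upgrades to locally uniform convergence on the interior of the limit's support, and this repackaging is precisely what \cite{dunn2021-logConcaveTesting} carries out.
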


In words, the lemma states that for large enough $t$, the log-concave MLE $\hat{p}_t$ is certain to lie in a very small bracket around the log-concave projection $\lcproj_P$ of the true distribution $P$. With this in hand, we are in a position to show Lemma~\ref{lem:growth_of_sigma_t}, the main statement underlying the proof of Theorem~\ref{thm:consistency}.

\begin{proof}[Proof of Lemma~\ref{lem:growth_of_sigma_t}]
    Let $\epsilon := d_H(P, \lcproj_P) > 0.$ Define $\eta_\epsilon = \epsilon^2/2^{11}.$ Using Lemma~\ref{lem:small_brack}, we know that there exists a bracket $[u^*, v^*]$ such that $|[u^*, v^*]| \le \eta_\epsilon$ and, almost surely, $\hat{p}_t \in [u^*, v^*]$ for all large enough $t$. But observe that $\mathcal{H}_{[]}([u^*, v^*], \epsilon^2/2^{11}) = 0,$ since the size of $[u^*, v^*]$ is already $\eta_\epsilon$. Further, since $\lcproj_P \in [u^*, v^*], $ by the triangle inequality, \[ d_H(P, [u^*, v^*]) = \inf_{Q \in [u_\eta, v_\eta]} d_H(P,Q) \ge d_H(P, \lcproj_P) - |[u^*, v^*]| \ge \epsilon (1 - 2^{-11}) \geq \epsilon \cdot \sqrt{24/25}.\]

    Let us define 
    \[ \widetilde\sigma_t(P) := \inf_{q \in [u^*, v^*]} \sum_{s \le t} \log p(X_s) - \log q(X_s). \] 
    By exploiting the above observations, Lemma~\ref{lem:wong_shen} yields that for every $t$,\[ P^\infty \left( \widetilde\sigma_t(P) \le t \varepsilon^2/25 \right) \le 4 \exp{-Ct\epsilon^2}.\]
    
    Note further that if $\hat{p}_t \in [u^*, v^*],$ then since $\hat{p}_t$ is a maximum likelhood estimate, it must hold that $\sigma_t(P) = \widetilde\sigma_t(P)$. Let $\mathsf{E}_{s}:= \{ \forall t \ge s, \hat{p}_t \in [u^*, v^*]\}$ be the event that $\hat{p}_t$ lies in the small bracket after time $s$, and $\mathsf{A}_t := \{ \sigma_t(P)/t\varepsilon^2 \ge 1/25\}$ be the event that $\sigma_t(P)$ is larger than $t\varepsilon^2/25$.
    
    By Lemma~\ref{lem:wong_shen}, for every fixed time $s$ and $t \ge s,$ \[ P^\infty(\mathsf{A}_t^c \cap \mathsf{E}_s) \le 4\exp{-Ct\varepsilon^2},\] and since this upper bound is summable, by the Borel-Cantelli Lemma \[
0 = P^\infty \left( \limsup_t (\mathsf{A}_t^c \cap \mathsf{E}_s ) \right) = P^\infty \left( (\limsup_t \mathsf{A}_t^c) \cap \mathsf{E}_s  \right),
    \]
    and so for any time $s$, \[ P^\infty(\limsup_t \mathsf{A}_t^c) \le P^\infty( \limsup_t \mathsf{A}_t^c \cap \mathsf{E}_s) + P^\infty(\mathsf{E}_s^c) = P^\infty(\mathsf{E}_s^c).\] By Lemma~\ref{lem:small_brack}, $\hat{p}_t$ must eventually almost surely fall in $[u^*,v^*],$ $\lim_{s \to \infty} P^\infty(\mathsf{E}_s^c) \to 0.$ Further notice that  \[ \limsup_t \mathsf{A}_t^c = \{ \sigma_t(P)/t\varepsilon^2 < 1/25 \textrm{ infinitely often}\} = \{\liminf \sigma_t(P)/t\varepsilon^2 < 1/25\}.\] Putting the observations together, we conclude upon sending $s \to \infty$ that \[ P^\infty( \liminf \sigma_t(P)/t\varepsilon^2 < 1/25) \le \lim_{s \to \infty} P^\infty(\mathsf{E}_s^c) = 0. \qedhere \] 
\end{proof}

\subsection{Proofs Underlying the Power Analysis}\label{appx:power_analysis_proofs}

We shall begin by stating the key lemmata underlying our argument, which exploit our bracketing entropy control from Lemma~\ref{lem:lc_entropy_bound} along with results in the literature that bound the maximum value attained by a log-concave density in order to make the same effective. We then prove the main result, and conclude by proving Lemma~\ref{lem:lc_entropy_bound}.

\subsubsection{Controlling the Maximum Value Attained by the Log-Concave MLE}

The rate analysis quantitatively exploits Lemma~\ref{lem:wong_shen}. To do so, we first need bracketing entropy bounds for log-concave laws, which is precisely the subject of Lemma~\ref{lem:lc_entropy_bound}. We recall that this controls the bracketing entropy of the class $\lc_{d,\boundparam}$ of log-concave laws with densities supported on $[-1,1]^d$ that are bounded from above by $\boundparam$, showing that \[ \mathcal{H}_{[]}(\lc_{d,\boundparam}, \zeta) = \widetilde{O}( (\boundparam/\zeta)^{\max(d/2, (d-1)})).\]

The role of $\boundparam$ in the above is quantitatively unimportant as long as this constant does not scale with relevant parameters. This fact is assured for log-concave laws with near identity covariance. Intuitively, since the covariance is lower bounded in all directions, the laws cannot concentrate too much, and thus the value of the density at the mode cannot be too large. This observation is encapsulated in the following result, which follows trivially from the work of Kim \& Samworth.

\begin{mylem}\label{lem:cov_large_means_law_small}
    \emph{\cite[Cor.~6]{kim2016global}} Let $\lc_d^{\gamma}$ denote the set of log-concave laws distributed on $[-1,1]^d$ with covariances lower bounded in the positive semidefinite order by $\gamma I.$ Then there exists a dimension dependent constant $C_d$ such that for any $f \in \lc_d^\gamma,$  \[ \max_{x \in [-1,1]^d} f(x) \le \gamma^{-d/2}C_d.\]
\end{mylem}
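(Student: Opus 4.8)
The plan is to reduce to the isotropic case by an affine change of variables, invoke the classical bound on the sup-norm of an isotropic log-concave density, and then translate the covariance lower bound $\Sigma\succeq\gamma I$ into the determinant bound $\det\Sigma\ge\gamma^{d}$. Write $\mu=\E_f[X]$ and $\Sigma=\mathrm{Cov}(f)$; since $\Sigma\succeq\gamma I\succ 0$ it is invertible, and $y\mapsto\tilde f(y):=(\det\Sigma)^{1/2}\,f(\Sigma^{1/2}y+\mu)$ is the density of $Y=\Sigma^{-1/2}(X-\mu)$, which is again log-concave (affine pushforwards preserve log-concavity, by Pr\'ekopa--Leindler) and is \emph{isotropic}: $\mathrm{Cov}(Y)=I$. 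By construction $\|f\|_\infty=(\det\Sigma)^{-1/2}\,\|\tilde f\|_\infty$.

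Next I would use the fact that an isotropic log-concave density on $\reals^d$ satisfies $\|\tilde f\|_\infty\le C_d$ for a constant depending only on $d$; equivalently, the isotropic constant $L_g:=\|g\|_\infty^{1/d}(\det\mathrm{Cov}(g))^{1/(2d)}$ of any log-concave density $g$ is at most a dimension-dependent constant. This is exactly the input supplied by the cited corollary of Kim \& Samworth (and is classical --- see, e.g., Lov\'asz \& Vempala); crucially, the sharp \emph{universal} (dimension-free) version of this bound, which is the slicing conjecture, is not needed here, so even crude classical estimates on $L_g$ are sufficient. Granting this, \[\|f\|_\infty=(\det\Sigma)^{-1/2}\|\tilde f\|_\infty\le (\det\Sigma)^{-1/2}\,C_d,\] and since every eigenvalue of $\Sigma$ is at least $\gamma$ we have $\det\Sigma\ge\gamma^{d}$, whence $\|f\|_\infty\le\gamma^{-d/2}C_d$, which is the claim. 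I would note that the restriction of the support to $[-1,1]^d$ plays no role in this argument beyond fixing the ambient class $\lc_d^\gamma$ (it does force $\gamma\le 1$, the natural range); and that in $d=1$ the required sup-norm bound is completely elementary, since a one-dimensional log-concave density with sup-norm $M$ exceeds $M/e$ only on an interval of length at most $e/M$ and decays geometrically away from it, forcing its variance to be $O(1/M^2)$.

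The only ingredient that is not a one-line change of variables is the bound $L_g\le C_d$ on the isotropic constant; in its optimal form this is deep, but for the present statement any dimension-dependent bound suffices and such bounds are classical, so I do not expect a genuine obstacle --- the lemma is, as the text already asserts, essentially a repackaging of Kim \& Samworth's corollary after normalization.
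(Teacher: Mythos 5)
Your argument is correct. Note, however, that the paper does not prove this lemma at all: it is imported verbatim from Kim and Samworth \cite[Cor.~6]{kim2016global}, so there is no in-paper proof to compare against. Your self-contained derivation --- pass to the isotropic normalization $\tilde f(y)=(\det\Sigma)^{1/2}f(\Sigma^{1/2}y+\mu)$, invoke the dimension-dependent bound $\|\tilde f\|_\infty\le C_d$ on isotropic log-concave densities, and convert $\Sigma\succeq\gamma I$ into $\det\Sigma\ge\gamma^d$ --- is exactly the standard route to this fact and is the argument underlying the cited corollary. Two minor remarks: affine invariance of log-concavity needs no appeal to Pr\'ekopa--Leindler (the composition of a concave function with an affine map is concave), and, as you observe, the restriction of the support to $[-1,1]^d$ is immaterial to the bound.
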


Of course, our bounds in Theorem~\ref{thm:rate} depend on $\Delta_P,$ which roughly speaking only controls that the covariance of the underlying law $P$. The relevance of this quantity arises from the following observation, due to Barber and Samworth.

\begin{mylem}\label{lem:mle_is_controlled_guts}
    \emph{\cite[Cor.~8]{barber2021local}} Let $P\in \mathcal{D}_1$ be a law supported on $[-1,1]^d$ such that \[ \Delta_P := \min_{v : \|v\| = 1} \mathbb{E}_p[ |\langle v, X - \mathbb{E}_p[X]\rangle|] > 0.\] Then there exists a dimension dependent constant $c_d$ such that \( \mathrm{Cov}(\lcproj_P) \succeq c_d \Delta_P^2 I.\) Further, there exists a dimension-independent constant $C$ such that for any $t \ge 2Cd^3/\Delta_P^2,$ it holds with probability at least $1 - 2\exp{-Ct \Delta_P^2/d^2}$ that \( \mathrm{Cov}(\hat{p}_t) \succeq \frac{c_d\Delta_P^2}{4}I\) for the log-concave MLE $\hat p_t$. 
\end{mylem}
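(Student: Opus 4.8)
The plan is to split the statement into its deterministic half --- the covariance lower bound $\mathrm{Cov}(\lcproj_P)\succeq c_d\Delta_P^2 I$ --- and its stochastic half --- the high-probability bound for the MLE $\hat p_t$ --- and to derive the second from the first together with a uniform law of large numbers for the functional $\Delta$.

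For the deterministic part, recall that the log-concave M-projection preserves the mean of $P$ and, more importantly, cannot destroy the robust spread measured by $\Delta$: one has $\Delta_{\lcproj_P}\gtrsim \Delta_P$ up to an absolute constant. For a log-concave law a lower bound on $\Delta$ (a first-moment notion of spread in the worst direction) upgrades to a lower bound on the covariance by the reverse-H\"older / Borell-type inequalities available in the log-concave world, yielding $\mathrm{Cov}(\lcproj_P)\succeq c_d'\,\Delta_{\lcproj_P}^2\, I\succeq c_d\,\Delta_P^2\, I$ for a dimension-dependent constant. This is precisely \cite[Cor.~8]{barber2021local}, which I would simply invoke; the sketch above indicates the route if one wanted it from scratch.

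For the stochastic part, observe that the log-concave MLE $\hat p_t$ is the log-concave M-projection of the empirical measure $P_t := t^{-1}\sum_{s\le t}\delta_{X_s}$, which is again supported on $[-1,1]^d$, so the deterministic bound (which holds for the log-concave projection of any law on $[-1,1]^d$) gives $\mathrm{Cov}(\hat p_t)\succeq c_d\,\Delta_{P_t}^2\, I$. It therefore suffices to show $\Delta_{P_t}\ge \Delta_P/2$ on an event of probability at least $1-2\exp{-Ct\Delta_P^2/d^2}$ whenever $t\ge 2Cd^3/\Delta_P^2$. Write $\mu=\mathbb{E}_P[X]$ and $\bar X_t=\mathbb{E}_{P_t}[X]$, and for a unit vector $v$ put $g(v)=\mathbb{E}_P[\,|\langle v,X-\mu\rangle|\,]$ and $g_t(v)=\mathbb{E}_{P_t}[\,|\langle v,X-\bar X_t\rangle|\,]$, so that $\Delta_P=\min_{\|v\|=1}g(v)$ and $\Delta_{P_t}=\min_{\|v\|=1}g_t(v)$. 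Then $|g_t(v)-g(v)|\le \|\bar X_t-\mu\| + \big|\mathbb{E}_{P_t}[\,|\langle v,X-\mu\rangle|\,]-g(v)\big|$, using $\mathbb{E}_{P_t}[\,|\langle v,\mu-\bar X_t\rangle|\,]=|\langle v,\mu-\bar X_t\rangle|$. The first term is controlled by a coordinatewise Hoeffding bound on $\bar X_t$ (each coordinate of $X$ lies in $[-1,1]$); the second, uniformly over $v$, by Hoeffding for each fixed $v$ --- the summand $|\langle v,X-\mu\rangle|$ is bounded by $2\sqrt d$ --- together with a $\delta$-net of the unit sphere of cardinality $(3/\delta)^d$ and the $2\sqrt d$-Lipschitz dependence of $v\mapsto|\langle v,x-\mu\rangle|$. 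Taking $\delta$ of order $\Delta_P/\sqrt d$, collecting the three exponential tails into one union bound, and noting that the net-cardinality cost $d\log(3/\delta)$ is dominated once $t\gtrsim d^3/\Delta_P^2$, one gets $\sup_{\|v\|=1}|g_t(v)-g(v)|\le \Delta_P/2$ with the claimed probability; hence $\Delta_{P_t}\ge \Delta_P/2$, and plugging back in gives $\mathrm{Cov}(\hat p_t)\succeq c_d\Delta_{P_t}^2 I\succeq (c_d/4)\Delta_P^2 I$.

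The main obstacle is the deterministic covariance lower bound for the log-concave projection: this is where the geometry of log-concavity genuinely enters --- and where the constant $c_d$ is born --- so I would take it as the cited input from Barber and Samworth rather than reprove it. Within the part one does from scratch, the only real delicacy is bookkeeping: tuning the net scale $\delta$ and the Lipschitz slack against the Hoeffding exponents so the union of tails collapses to exactly $2\exp{-Ct\Delta_P^2/d^2}$ and the regime of validity is exactly $t\ge 2Cd^3/\Delta_P^2$. The appearance of the empirical-mean deviation term $\|\bar X_t-\mu\|$ is intrinsic, since $\Delta_{P_t}$ is centered at $\bar X_t$ rather than at $\mu$.
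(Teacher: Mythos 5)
Your proposal is correct and follows essentially the same route as the paper: the deterministic covariance bound is taken as the cited Barber--Samworth input, the high-probability statement is reduced to showing $\Delta_{P_t}\ge\Delta_P/2$ via uniform concentration of $v\mapsto\mathbb{E}_{P_t}[|\langle v,X-\bar X_t\rangle|]$ over the unit sphere (the paper simply invokes ``standard subGaussian concentration over the unit ball'' where you spell out the net argument and the $\bar X_t$-versus-$\mu$ centering), and both conclude by observing $\hat p_t=\lcproj_{P_t}$.
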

\begin{proof}[Proof of Lemma~\ref{lem:mle_is_controlled_guts}]
    The first observation is a direct restatement of Lemma 7 of Barber and Samworth. The second statement follows from the fact that over $v: \|v\| = 1,$ $v \mapsto \langle v, X - \mathbb{E}_P[X]\rangle$ is bounded by $2\sqrt{d},$ and is clearly continuous in $v$. Thus exploiting standard subGaussian concentration results over the unit ball, it follows that with probability at least $1 - 2\exp{- C t \Delta_P^2/d^2},$ it holds that for the empirical law $p_t = \frac{1}{t} \sum_{s \le t} \delta_{X_s},$ \[ \min_{v : \|v\| = 1} \mathbb{E}_{p_t}[|\langle v , X - \mathbb{E}_{p_t}[X]\rangle|] \ge \Delta_P/2.\] But notice that $\hat{p}_t = \lcproj_{p_t},$ from which the claim follows by the first part.  \qedhere
\end{proof}

Merging Lemmas~\ref{lem:cov_large_means_law_small} and~\ref{lem:mle_is_controlled_guts} immediately yields the following observation, which serves as a concrete bound for the scale of $\boundparam$ we need to employ in Lemma~\ref{lem:lc_entropy_bound}.

\begin{mylem}\label{lem:mle_is_controlled}
    There exists a constant $C_d$ depending only on $d$ such that for any $t \ge 2C_d d^3/\Delta_P^2,$ it holds with probability at least $1 - 2 \exp{-C_dt\Delta_P^2/d^2}$ that \[ \max_{x \in [-1,1]^d} \hat{p}_t(x) \le C_d  \Delta_P^{-d}.\]
\begin{proof}
    Employing Lemma~\ref{lem:cov_large_means_law_small}, we observe that  $\{ \max \hat{p}_t \le (c_d \Delta_P^2/4)^{-d/2}\} \subset \{ \mathrm{Cov}(\hat{p}_t) \preceq c_d\Delta_P^2/4 I\},$ and the latter has probability at least $1 - 2\exp{-t (C\Delta_P^2/d^2)}$ for $t \ge 2Cd^3/\Delta_P^2$. Take $C_d = \max(C, c_d^{-d/2}).$
\end{proof}
\end{mylem}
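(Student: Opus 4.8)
The plan is to compose the two immediately preceding black-box estimates. Lemma~\ref{lem:mle_is_controlled_guts} supplies, with high probability, a positive-definite lower bound on $\mathrm{Cov}(\hat p_t)$, while Lemma~\ref{lem:cov_large_means_law_small} converts any such covariance lower bound, for a log-concave density supported on $[-1,1]^d$, into an upper bound on its modal value; stringing these together delivers the claim. As elsewhere in this subsection, $P$ is taken to be supported on $[-1,1]^d$, so that $\Delta_P$ is the relevant quantity (a robust surrogate for the minimum eigenvalue of $\mathrm{Cov}(P)$) and the off-the-model results of Barber and Samworth apply.

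First I would invoke the second part of Lemma~\ref{lem:mle_is_controlled_guts}: there is a dimension-independent constant $C$ such that for every $t \ge 2Cd^3/\Delta_P^2$, the event $\mathsf E_t := \{\mathrm{Cov}(\hat p_t) \succeq \tfrac{c_d\Delta_P^2}{4}I\}$ satisfies $P^{\otimes t}(\mathsf E_t) \ge 1 - 2\exp{-Ct\Delta_P^2/d^2}$, with $c_d$ the dimension-dependent constant of that lemma. Next I would record the structural fact that the log-concave MLE of a sample is supported on the convex hull of the sample points; since $X_1,\dots,X_t \in [-1,1]^d$ and this box is convex, $\hat p_t$ is a log-concave density supported on $[-1,1]^d$, so on the event $\mathsf E_t$ we have $\hat p_t \in \lc_d^{\gamma}$ with $\gamma := c_d\Delta_P^2/4$. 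Lemma~\ref{lem:cov_large_means_law_small} then gives, on $\mathsf E_t$, $\max_{x\in[-1,1]^d}\hat p_t(x) \le \gamma^{-d/2}C_d' = (4/c_d)^{d/2}C_d'\,\Delta_P^{-d}$, where $C_d'$ is the constant appearing there. Finally I would collapse the dimension-only constants $C$, $c_d$ and $C_d'$ into a single $C_d$ — large enough that $2C_d d^3/\Delta_P^2$ dominates the threshold $2Cd^3/\Delta_P^2$ and that $C_d \ge (4/c_d)^{d/2}C_d'$ — which reproduces the statement, with the exponent weakened to the smaller of the two constants if one insists on using the same symbol $C_d$ everywhere.

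I do not anticipate a genuine obstacle: the argument is bookkeeping over two cited results. The two places needing mild care are (i) the reduction that $\hat p_t$ is compactly supported on $[-1,1]^d$, which both licenses the application of Lemma~\ref{lem:cov_large_means_law_small} and guarantees that $\max_{x\in[-1,1]^d}$ is the global supremum; and (ii) the competing roles of a single constant $C_d$, which must be taken \emph{large} for the minimum-sample-size and modal bounds but is favourable when \emph{small} in the probability exponent — cleanest is to carry one constant in the exponent and another in the bounds, or simply to set $C_d = \max\{C,(4/c_d)^{d/2}C_d'\}$ and accept the coarser exponent.
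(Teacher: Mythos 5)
Your proposal is correct and follows essentially the same route as the paper: combine the high-probability covariance lower bound for $\hat p_t$ from Lemma~\ref{lem:mle_is_controlled_guts} with the deterministic modal bound of Lemma~\ref{lem:cov_large_means_law_small}, then merge constants. Your explicit remark that $\hat p_t$ is supported in $[-1,1]^d$ (so that Lemma~\ref{lem:cov_large_means_law_small} applies and the max over the box is the global supremum) is a point the paper leaves implicit, and your reading of the event inclusion is the correct one (the paper's displayed inclusion has its direction, and the order of $\preceq$, reversed).
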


\subsubsection{Proof of Bounds on Rejection Times}\label{appx:rate_bound_proof}

With the above in hand, we may proceed with the main argument.

\begin{proof}[Proof of Theorem~\ref{thm:rate}]
    Recall the definition $\sigma_t := \sum_{s \le t} \log p(X_s) - \log \hat{p}_t(X_s).$ We shall first lower bound $\sigma_t$ with high probability.
    
    Let $\boundparam$ be a quantity that we will choose later. Let $\varepsilon_t$ denote the solution to the fixed point equation from Lemma~\ref{lem:wong_shen}, instantiated with the bracketing entropy of $\lc_{d,\boundparam}$. Further, let define the event \[ \mathsf{E}_t := \{ \hat{p}_t \in \lc_{d,\boundparam} \}. \] 
        For any $t,$ provided that such that $ \varepsilon_t \le d_H(p, \lc)$ and $\hat{p}_t \in \lc_{d,\boundparam},$ Lemma \ref{lem:wong_shen} yields that  \begin{equation}\label{eq:lower_bound_on_sigma_under_entropy_scale} \sigma_t \ge \inf_{q \in \lc_{d, \boundparam}: \\ d_H(p,q) \ge d_H(p, \lc)} \log \prod_{s \le t} \frac{p(X_s)}{q(X_s)} \ge \frac{t d_H^2(p, \lc)}{24}\end{equation} 
    with probability at least $ 1 -  \exp{-Ct d_H^2(p, \lc)} - P^\infty(\mathsf{E}_t^c).$
    
    In the rest of the proof, we will determine the range of $t$ that leads to a small enough value for $\varepsilon_t$ to ensure that the condition  $\epsilon_t \leq d_H(p,\lc)$ is met and, at the same time, control $P^\infty(\mathsf{E}_t^c)$. To this end, we deploy Lemma~\ref{lem:lc_entropy_bound}. First, observe that for $d \ge 3$ and for any positive constants $c$ and $C$ \[ \int_{c\varepsilon^2}^{C\varepsilon} \sqrt{\tilde{O}(\boundparam^{d-1}\zeta^{-(d-1)})} \mathrm{d}\zeta = \widetilde{O}( \boundparam^{(d-1)/2} \varepsilon^{-(d-3)}). \] Note that polylogarithmic terms do not affect the main growth of the integral.\footnote{This can be seen by iterating the relation $\int x^n \log^m x = \frac{x^{n+1} \log^mx}{n+1} - \frac{m}{n+1} \int x^n \log^{m-1}(x).$} Therefore, solving the fixed point equation \[ \widetilde{O}(\boundparam^{(d-1)/2} \varepsilon^{-(d-3)}) = \varepsilon^2 t^{1/2},\] we obtain that for $d \ge 3$ \[ \varepsilon_t(\boundparam) = \widetilde{O}(\boundparam^{1/2} t^{-1/2(d-1)}),\] where we highlight the dependence on the as yet undetermined quantity $\boundparam$.
    
    A similar argument using the entropy bound $ \zeta^{-d/2}$ yields $\varepsilon_t(\boundparam) = \widetilde{O}( \boundparam^{d/(d+4)}t^{-2/(d+4)})$ for $d \in \{1,2\}$. Now define \[ T_1(\boundparam) = \inf\{t : \varepsilon_t(\boundparam) \le d_H(p,\lc)\}\]  and observe that \[ T_1(\boundparam) = \begin{cases} \widetilde{O}( \boundparam^{(d-1)}(d_H(p, \lc))^{-2(d-1)}) & d \ge 3 \\ \widetilde{O}( \boundparam^{d/2} (d_H(p,\lc)^{-(4+d)/2})) & d \in \{1,2\}\end{cases}. \]   
    
    Finally, by Lemma~\ref{lem:mle_is_controlled}, for $\boundparam \ge C_d\Delta_P^{-d}$ and $t \ge T_2 := C \Delta_P^2/d^2$ the probability of the event $\mathsf{E}_t$ is at least $1 - 2\exp{-t C\Delta_P^2/d^2}.$ Let us set $\boundparam_* = C_d\Delta_P^{-d}$ and let $T_0 := \max(T_1(\boundparam_*), T_2).$ We obtain that the lower bound \[ \sigma_t \ge \frac{t d_H^2(p, \lc)}{24}\] holds with probability at least $ 1 - C \exp{- t c d_H^2(p, \lc)} - C \exp{-t c\Delta_P^2/d^2}$ for $t \ge T_0$.    
    Now, observe that at any time $t \ge \max(T_0 , \frac{600\log(1/\alpha)}{d_H^2(p,\lc)}),$ it holds with probability at least $1 - \pi_t  - C \exp{ - t d_H^2(p,\lc)} - C \exp{-C t \Delta_P^2/d^2}$ that \[\log R_t =  \sigma_t - \rho_t  \ge \frac{t d_H^2(p, \lc)}{600} \ge \log(1/\alpha), \] and thus the probability that the rejection time $\tau_\alpha := \inf\{t : R_t \ge 1/\alpha\}$ exceeds the above bound is bounded by $\pi_t + C \exp{-t d_H^2(p, \lc)} + C \exp{-Ct\Delta_P^2/d^2}$.
\end{proof}

\subsubsection{Proof of Bracketing Entropy Bound on Log-Concave Laws}\label{appx:lc_entropy_bound_proof}

We proceed to show Lemma~\ref{lem:lc_entropy_bound}. We note that the upper bound for $d \le 3$ was shown by Kim and Samworth \cite{kim2016global}. Below we focus on $d \ge 4$. We shall exploit two existing results in the literature regarding convex sets and functions. The first is essentially due to Bronshtein (also see \cite[Lem.~3]{kur2019optimality}).
    
    \begin{mylem}\label{lem:bronshtein1976varepsilon}
         \emph{\cite{bronshtein1976varepsilon}} Let $\mathcal{K}_d$ denote the collection of convex sets in $[-1,1]^d$. For any $\zeta > 0,$ there exists a collection of pairs of convex sets $\mathcal{K}_{d,\zeta} \subset \mathcal{K}_d \times \mathcal{K}_d$ with $\log |\mathcal{K}_{d,\zeta}| = O(\zeta^{-(d-1)/2})$ such that \begin{itemize}
            \item Every $(\underline{K}, \overline{K}) \in \mathcal{K}_{d,\zeta}$ satisfies $\mathrm{Leb}_d( \overline{K}\setminus\underline{K}) \le \zeta.$
            \item For every $K \in \mathcal{K}_d,$ exists $(\underline{K}, \overline{K})\in \mathcal{K}_{d,\zeta}$ satisfying $\underline{K} \subset K \subset \overline{K}.$
        \end{itemize}
    \end{mylem}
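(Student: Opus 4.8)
The plan is to obtain the statement from Bronshtein's classical metric-entropy estimate for convex bodies under the Hausdorff metric, and then to convert a Hausdorff net into the required pair of volume brackets by Minkowski erosion and dilation. Write $B$ for the closed Euclidean unit ball, $K \oplus \epsilon B$ for the $\epsilon$-dilation (Minkowski sum), and $K \ominus \epsilon B := \{x : x + \epsilon B \subseteq K\}$ for the $\epsilon$-erosion; both operations map convex sets to convex sets, and intersecting with $[-1,1]^d$ keeps everything inside the box without destroying either convexity or the inclusions we need. The substantive input — which I would cite rather than reprove — is Bronshtein's theorem \cite{bronshtein1976varepsilon} (see also \cite[Lem.~3]{kur2019optimality}): for every $\epsilon \in (0,1]$ there is a finite family $\mathcal{N}_\epsilon \subset \mathcal{K}_d$ with $\log|\mathcal{N}_\epsilon| = O(\epsilon^{-(d-1)/2})$ such that every $K \in \mathcal{K}_d$ lies within Hausdorff distance $\epsilon$ of some $K_0 \in \mathcal{N}_\epsilon$, i.e.\ $K \subseteq K_0 \oplus \epsilon B$ and $K_0 \subseteq K \oplus \epsilon B$.

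First I would record the elementary identity $(L \oplus \epsilon B) \ominus \epsilon B = L$, valid for every convex body $L$: the inclusion $\supseteq$ is immediate, and $\subseteq$ follows from the support-function identity $h_{L \oplus \epsilon B} = h_L + \epsilon$ together with the separating-hyperplane theorem. Given $K \in \mathcal{K}_d$ and a nearby $K_0 \in \mathcal{N}_\epsilon$, I would set
\[ \underline{K} := (K_0 \ominus \epsilon B) \cap [-1,1]^d, \qquad \overline{K} := (K_0 \oplus \epsilon B) \cap [-1,1]^d, \]
which are convex subsets of $[-1,1]^d$. The inclusion $K \subseteq \overline{K}$ is the first half of the Hausdorff bound together with $K \subseteq [-1,1]^d$; and for $\underline{K} \subseteq K$, if $x \in K_0 \ominus \epsilon B$ then $x + \epsilon B \subseteq K_0 \subseteq K \oplus \epsilon B$, so $x \in (K \oplus \epsilon B) \ominus \epsilon B = K$ by the identity above. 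Taking $\mathcal{K}_{d,\epsilon}$ to be the collection of all such pairs as $K_0$ ranges over $\mathcal{N}_\epsilon$ gives $\log|\mathcal{K}_{d,\epsilon}| \le \log|\mathcal{N}_\epsilon| = O(\epsilon^{-(d-1)/2})$, and every $K \in \mathcal{K}_d$ is bracketed.

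It then remains to control the shell volume $\mathrm{Leb}_d(\overline{K} \setminus \underline{K}) \le \mathrm{Leb}_d\big((K_0 \oplus \epsilon B)\setminus(K_0 \ominus \epsilon B)\big)$, which I would split as $\mathrm{Leb}_d\big((K_0 \oplus \epsilon B)\setminus K_0\big) + \mathrm{Leb}_d\big(K_0 \setminus (K_0 \ominus \epsilon B)\big)$. The first summand is bounded by Steiner's formula, $\mathrm{Leb}_d(K_0 \oplus \epsilon B) - \mathrm{Leb}_d(K_0) = \sum_{j=1}^d \binom{d}{j} W_j(K_0)\epsilon^j$, whose coefficients (quermassintegrals) are all at most a dimensional constant since $K_0 \subseteq [-1,1]^d$ and quermassintegrals are monotone under inclusion; hence it is $\le C_d \epsilon$ for $\epsilon \le 1$. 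For the second summand, $K_0 \setminus (K_0 \ominus \epsilon B)$ lies in the inner boundary layer $\{x \in K_0 : \mathrm{dist}(x,\partial K_0) \le \epsilon\}$, of measure at most $\epsilon$ times the surface area of $K_0$, which is at most the surface area of $[-1,1]^d$ by monotonicity of surface area under inclusion of convex bodies; so again $\le C_d \epsilon$ (in the degenerate case $K_0 \ominus \epsilon B = \emptyset$ the body $K_0$ has inradius below $\epsilon$, hence small minimal width, hence $\mathrm{Leb}_d(K_0) = O(\epsilon)$ directly). Altogether $\mathrm{Leb}_d(\overline{K}\setminus\underline{K}) \le C_d\epsilon$, and choosing $\epsilon := \zeta/C_d$ yields both the bracket-size bound $\le \zeta$ and $\log|\mathcal{K}_{d,\zeta}| = O((\zeta/C_d)^{-(d-1)/2}) = O(\zeta^{-(d-1)/2})$, as claimed. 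The only genuinely hard ingredient is Bronshtein's $\epsilon^{-(d-1)/2}$ Hausdorff entropy bound, with its characteristic square-root exponent; the remainder is routine Minkowski-geometry bookkeeping, whose only mild subtleties are keeping the brackets inside $[-1,1]^d$ and the elementary shell-volume estimate.
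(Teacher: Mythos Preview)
Your proposal is correct. The paper does not actually prove this lemma; it simply cites it as ``essentially due to Bronshtein'' with a pointer to \cite[Lem.~3]{kur2019optimality} for the bracketing formulation, so your derivation---taking Bronshtein's Hausdorff-metric entropy bound as the black box and converting it to volume brackets via Minkowski erosion/dilation plus a Steiner-type shell estimate---is exactly the standard argument the paper's citations gesture at, and it fills in what the paper leaves implicit.
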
    
    
    In other words, the bracketing entropy of convex sets under the set difference metric is controlled at rate $(d-1)/2$. Importantly, the bracketing demonstrated above is proper. This result may be extended to the following bracketing entropy bound on convex functions as by Gao and Wellner.
    
    \begin{mylem}\label{lem:Thm-5.1-gao2017entropy}
        \emph{\cite[Thm.~1.5]{gao2017entropy}} Let $K$ be a convex set in $[-1,1]^d,$ and let $\mathcal{C}_{K,\boundparam}$ be the set of convex functions upper bounded by $\boundparam$ over $K$. Then the $L_2(K)$ bracketing entropy of $\mathcal{C}_{K,\boundparam}$ at scale $\zeta$ is bounded as $O((\boundparam /\zeta)^{(d-1)}).$
    \end{mylem}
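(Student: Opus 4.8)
\noindent\emph{Proof strategy.} This is the bracketing entropy bound of Gao and Wellner, and the plan is to reduce it to Bronshtein's bracketing of convex \emph{sets} (Lemma~\ref{lem:bronshtein1976varepsilon}), used in one extra dimension. Given $f\in\mathcal{C}_{K,\boundparam}$, normalise its range to $[0,\boundparam]$ and pass to the clipped epigraph $E_f:=\{(x,t):x\in K,\ f(x)\le t\le\boundparam\}$, a convex subset of $[-1,1]^d\times[0,\boundparam]$. Cavalieri's principle gives $\leb_{d+1}(E_f\triangle E_g)=\|f-g\|_{L_1(K)}$, and combined with the fact that a \emph{proper} set bracket $\underline E\subset E_f\subset\overline E$ induces --- via lower-boundary functions --- a convex sandwich $\overline f\le f\le\underline f$ of $L_1(K)$ width at most $\leb_{d+1}(\overline E\setminus\underline E)$ plus an error localised near $\partial K$, this turns Lemma~\ref{lem:bronshtein1976varepsilon} in dimension $d+1$ into an $L_1(K)$ bracketing entropy bound $O\big((\boundparam/\zeta)^{d/2}\big)$ for $\mathcal{C}_{K,\boundparam}$. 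The difficulty is that the crude passage $\|f-g\|_2^2\le\boundparam\|f-g\|_1$ from $L_1$ to $L_2$ loses a full power and would only give exponent $d$, so $L_2$ control has to come from exploiting the geometry of convex functions more carefully.

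The sharpening proceeds by splitting $K$ into a bulk $K_h:=\{x:\mathrm{dist}(x,\partial K)\ge h\}$ and a boundary layer $K\setminus K_h$, and inducting on $d$. On the bulk a convex function bounded by $\boundparam$ is Lipschitz with constant $O(\boundparam/h)$, so a Hausdorff-$\epsilon$ bracket of the epigraph is directly an $L_\infty$ --- hence $L_2$ --- bracket of the function of size $O(\boundparam\epsilon/h)$, and Bronshtein's Hausdorff bracketing of convex bodies, taken at the scale that makes this $\le\zeta$, contributes a count that, for the right $h$, stays within the $O\big((\boundparam/\zeta)^{d-1}\big)$ budget for $d\ge 2$. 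The boundary layer is the crux: it is a shell of width $h$ around the $(d-1)$-dimensional surface $\partial K$, where $f$ can be arbitrarily steep and cannot be approximated pointwise, so instead one foliates the shell by $(d-1)$-dimensional convex slices, notes that the trace of $f$ on each slice is again a bounded convex function in one fewer dimension, and brackets these traces using the $(d-1)$-dimensional case of the present bound (the base case $d=3$ being the estimate of Kim and Samworth), coupling the slice-level brackets to a scale-adapted discretisation of the distance to $\partial K$. It is exactly this $(d-1)$-dimensional boundary contribution that carries the exponent $d-1$ and dominates the bulk's Donsker exponent $d/2$ once $d\ge 2$; concatenating the bulk and boundary brackets and verifying that the combined $L_2$-size is $\le\zeta$ closes the induction.

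The main obstacle is getting the boundary-layer count to come out to exactly $(\boundparam/\zeta)^{d-1}$: a naive product of (number of slices) times (brackets per slice) overshoots, so the slice brackets and the perpendicular profile of $f$ must be coupled tightly --- this coupling, and the scale-adapted choice of slices, is the technical heart of Gao and Wellner's argument, and is also what removes the spurious logarithmic factors one gets from a crude dyadic treatment. One must additionally check that the sandwich functions extracted from the set-brackets are admissible (convex, bracketing $f$, of the right sizes after concatenation over the pieces), and that every constant depends only on $d$ --- in particular uniformly over all convex $K\subseteq[-1,1]^d$, which all have perimeter $O(1)$. The bulk step, by contrast, is a routine application of Bronshtein with a bounded Lipschitz constant. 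Since for $d\le 3$ the bound $O\big((\boundparam/\zeta)^{\max(d/2,\,d-1)}\big)$ is already the estimate of Kim and Samworth, this effort is only needed for $d\ge 4$ --- the regime in which Lemma~\ref{lem:Thm-5.1-gao2017entropy} is invoked in \S\ref{appx:lc_entropy_bound_proof}.
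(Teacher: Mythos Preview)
The paper does not prove this lemma at all --- it is stated with a bare citation to Theorem~1.5 of Gao and Wellner, and no argument is given. Your sketch therefore goes well beyond what the paper does; there is no ``paper's own proof'' to compare against.

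As a sketch of the Gao--Wellner argument itself, your outline has the right architecture (epigraph lift to convex sets in one higher dimension, bulk/boundary split, Lipschitz control on the bulk via Bronshtein, and a dimensional recursion on the shell). However, you conflate this lemma with Lemma~\ref{lem:lc_entropy_bound} in two places: the exponent $\max(d/2,\,d-1)$ and the appeal to Kim--Samworth as a base case for $d\le 3$ pertain to the bracketing entropy of bounded \emph{log-concave densities}, not to the convex-function class $\mathcal{C}_{K,\boundparam}$ treated here. The Gao--Wellner recursion for convex functions does not bottom out at Kim--Samworth; their low-dimensional inputs are convex-function entropy estimates (e.g.\ Dryanov in $d=1$, Guntuboyina--Sen type bounds). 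Your final paragraph, which says the effort ``is only needed for $d\ge 4$'', is again a statement about Lemma~\ref{lem:lc_entropy_bound}, not about the present lemma. For the paper's purposes this is all moot, since the lemma is simply quoted.
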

    Above, the $L_2(K)$ metric is the usual $L_2$ distance $\|f-g\|_{L_2(K)} = (\int_K (f-g)^2 \mathrm{d}x)^{1/2},$ and the $L_2(K)$ bracketing entropy is the bracketing entropy when the size of a bracket $[u,v]$ is  $|[u,v]| = \|u-v\|_{L_2(K)}$.
    
    With the above in hand, we may proceed with the proof.

\begin{proof}[Proof of Lemma~\ref{lem:lc_entropy_bound}]
    For any log-concave law $f$, let $S:= \{ x \in \mathbb{R}^d \colon f (x)\ge \zeta^3\} = \{x \in \mathbb{R}^d \colon \log f(x) \ge 3\log\zeta\}$. Since $f$ is log-concave, the set $S$ is convex. As a result, by Lemma~\ref{lem:bronshtein1976varepsilon}, there exists some convex set $\tilde{S} \in \mathcal{K}_{d,\zeta^2/\boundparam}$ such that \( \mathrm{Leb}(S \setminus \tilde{S}) \le \zeta^2/\boundparam \) and $\tilde{S} \subset S$. Let $\tilde{\mathcal{C}}_{\tilde{S}, \zeta, \boundparam}$ denote a $\zeta$-bracketing of convex functions bounded by $\boundparam$ on $\tilde{S}$. Since, on $\tilde{S}$, the function $-\log f$ is convex  and is upper bounded by $-\log \boundparam$, by Lemma~\ref{lem:Thm-5.1-gao2017entropy} there exists a bracket $[-u, -l] \in \tilde{\mathcal{C}}_{\tilde{S}, \zeta/\boundparam, \log \boundparam/\zeta^3 }$ such that, on $\tilde{S},$ \( l \le \log f \le u\), and $ \int_{\tilde{S}} (u(x)-l(x))^2 \mathrm{d}x \le \zeta^2/\boundparam^2$. Note that, on $\tilde{S}$, $f$ is lower bounded by $-3\log \zeta$  and that,  without loss of generality,  we may  assume that $\sup_{x \in \tilde{S}}u(x) \le \log \boundparam,$ since this is already a pointwise upper bound on $f.$ 
    
    Next, we construct the functions \begin{align*}
       x \in [-1,1]^d \mapsto  U(x) &:= \begin{cases} e^{u(x)} & x \in \tilde{S} \\  \boundparam & x \in S\setminus \tilde{S} \\ \zeta^3 & x \in [-1,1]^d \setminus S \end{cases},\\
      x \in [-1,1]^d \mapsto   L(x) &:= \begin{cases} e^{l(x)} & x \in \tilde{S} \\ \zeta^3 & x \in S \setminus \tilde{S} \\ 0 & x \in [-1,1]^d\setminus S  \end{cases}. 
    \end{align*}
    
Observe that $U \ge f \ge L$ on $[-1,1]^d$. Furthermore, for $\zeta < 2^{-d},$ \begin{align*}
        \int (\sqrt{U} - \sqrt{L})^2 \mathrm{d}x &= \int_{\tilde{S}} (e^{u(x)/2} - e^{l(x)/2})^2 \mathrm{d}x + \int_{S \setminus \tilde{S}} \boundparam \mathrm{d}x + \int_{[-1,1]^d \setminus S} \zeta^{3} \mathrm{d}x \\
        &\le \int_{\tilde{S}} e^{u(x)}(1 - e^{u(x) - l(x)/2})^2 \mathrm{d}x + \boundparam \cdot \zeta^2/ \boundparam + 2^d \zeta^3 \\
        &\le \int_{\tilde{S}} \boundparam^2 (u(x) - l(x))^2/4 \, \mathrm{d}x + 2\zeta^2 \\
        &\le \boundparam^2 \cdot \zeta^2/\boundparam^2 + 2\zeta^2 = 3\zeta^2,
    \end{align*} 
    where we have exploited the fact that $z \mapsto e^{z/2}$ is Lipschitz on $[-\infty, \log C],$ with derivative bounded by $e^{(\log \boundparam)/2}/2 = \sqrt{\boundparam}/2$ to argue that $ e^{(u(x) - l(x))/2} - e^{0} \le \sqrt{\boundparam} |u(x) - l(x) - 0|/2$. 
        
    Since this construction can be carried out for any $f$, we conclude that we can construct a bracketing cover of $\lc_{d,\boundparam}$ at scale $O(\zeta)$ as the union of the bracketing covers of convex functions on each of the smaller sets in $\mathcal{K}_{d, \zeta^2/\boundparam}$. By Lemmas~\ref{lem:bronshtein1976varepsilon} and~\ref{lem:Thm-5.1-gao2017entropy}, the size of this cover is  
    \[
    \exp{ O((\boundparam/\zeta)^{d-1})} \cdot \exp{O( (\log \boundparam/\zeta^3)(\boundparam)/\zeta)^{d-1}} = \exp{ \widetilde{O}((\boundparam/\zeta)^{d-1})},
    \]
    and the claim now follows. Let us again observe that the resulting cover is improper, in that the maps $U(x)$ and $L(x)$ are not log-concave. 
\end{proof}

\subsection{Regret Control for Bounded Lipschitz Laws on the Unit Box}\label{appx:bounded_unit_box_regret}

As this subsection demonstrates, both Corollaries~\ref{thm:rate_for_box} and~\ref{prop:consistency_box} rely on arguing that laws in $\mathcal{D}_{\mathrm{Box, Lip, \boundparam}}$ can be estimated in a low-regret manner online. We argue this by exploiting the following result, which follows as a simplification of the results of Wong and Shen on sieve estimators.

\begin{mylem}\label{lem:kl_bound}
    \emph{(Adaptation of \cite[Cor.~1 \& Thm.~6]{wong1995probability})}For every $P \in \mathcal{D}_{\mathrm{Box,Lip,\boundparam}}$ and $t \geq 1,$ there exists a sieve MLE $\hat{q}(\cdot) = \hat{q}(\cdot ;X_1^t)$ and a constant $A > 1$ depending only on $\boundparam$ such that for every $\zeta \ge \zeta_t,$ \[ P^\infty\left(\mathrm{KL}(p\|\hat{q}) > \frac{1}{A} \zeta^2\log(1/\zeta) \right) \le A\exp{ -t\frac{\zeta^2}{A\log(1/\zeta)}}, \] where $\zeta_t = \widetilde{O}(t^{-1/2(d+2)}).$

\begin{proof}
    The cited results of Wong and Shen apply because densities of laws in $\mathcal{D}_{\mathrm{Box,Lip,\boundparam}}$ are uniformly upper bounded. This directly yields the entirety of the statement, barring the scale bound on $\zeta_t$. This scale is determined by the same entropy integral fixed point equation that appears in Lemma~\ref{lem:wong_shen}, and for this instance, the bound can be derived by using the standard fact that the Hellinger bracketing entropy of Lipschitz functions on a box at scale $\eta$ are controlled as $O( \eta^{-(d+1)})$ \cite{van1994bracketing}.
\end{proof}
\end{mylem}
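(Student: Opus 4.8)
\emph{Proof proposal.} The plan is to exhibit $\hat q$ as a sieve maximum likelihood estimator and then read the conclusion off from the sieve-MLE convergence theory of Wong and Shen \cite{wong1995probability}, which is tailored to precisely this situation. First I would fix a scale sequence $\eta_t \searrow 0$ and, at sample size $t$, take $\mathcal{S}_t \subset \mathcal{D}_{\mathrm{Box,Lip,\boundparam}}$ to be a finite subclass that is $\eta_t$-dense in the Hellinger metric (it exists because the class has finite Hellinger bracketing entropy at every scale), setting $\hat q = \hat q(\cdot\,;X_1^t) := \argmax_{q \in \mathcal{S}_t} \sum_{s \le t}\log q(X_s)$. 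The two-sided bound $1/\boundparam \le p \le \boundparam$ valid for every $p \in \mathcal{D}_{\mathrm{Box,Lip,\boundparam}}$ does the bulk of the work: it forces the likelihood ratio $p(X)/q(X)$ into $[\boundparam^{-2},\boundparam^{2}]$ for every candidate $q$, so the sieve MLE is well defined and the log-likelihood increments are bounded; it makes the approximation error $d_H(p,\mathcal{S}_t) \le \eta_t$, which is negligible once $\eta_t$ is taken of the same order as the target rate; and it gives $\mathrm{KL}(p\|q) \lesssim d_H^2(p,q)$ up to $\boundparam$-dependent constants, and more to the point for the form quoted, it licenses the general Wong--Shen bound $\mathrm{KL}(p\|q) \lesssim d_H^2(p,q)\log(1/d_H(p,q))$.

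The other ingredient is an entropy estimate. Since $u \mapsto \sqrt u$ is bi-Lipschitz on $[1/\boundparam,\boundparam]$, a Hellinger $\eta$-bracketing of $\mathcal{D}_{\mathrm{Box,Lip,\boundparam}}$ follows, up to $\boundparam$-dependent constants, from an $L^2$-bracketing of the $1$-Lipschitz functions on $[-1,1]^d$ lying in a fixed range, and the classical estimate for the latter gives $\mathcal{H}_{[]}(\mathcal{D}_{\mathrm{Box,Lip,\boundparam}},\eta) = O(\eta^{-(d+1)})$ \cite{van1994bracketing}. Feeding this into the fixed-point equation of Lemma~\ref{lem:wong_shen} --- that is, letting $\zeta_t$ be the smallest $\epsilon$ with $\int_{\epsilon^2/2^8}^{\sqrt 2\epsilon}\sqrt{\mathcal{H}_{[]}(\mathcal{D}_{\mathrm{Box,Lip,\boundparam}},\zeta/10)}\,\mathrm{d}\zeta \le 2^{-11}\sqrt t\,\epsilon^2$, and then taking $\eta_t = \zeta_t$ so the sieve approximation does not dominate --- a routine computation (bounding the integral by the contribution of its lower endpoint, and carrying the extra logarithm the Hellinger-to-KL passage injects into both the rate and the exponent) yields the claimed scale $\zeta_t = \widetilde{O}(t^{-1/2(d+2)})$.

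With the sieve and the scale fixed, Wong and Shen's sieve convergence statement (their Theorem~6, together with Corollary~1 for the Hellinger-to-KL step) gives directly: for every $\zeta \ge \zeta_t$, with $P^\infty$-probability at least $1 - A\exp(-t\zeta^2/(A\log(1/\zeta)))$ one has $\mathrm{KL}(p\|\hat q) \le \frac1A\zeta^2\log(1/\zeta)$, for a constant $A = A(\boundparam) > 1$; collecting all $\boundparam$-dependent constants into a single $A$ gives the statement.

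There is no isolated deep step --- the lemma repackages off-the-shelf results --- so the real work is bookkeeping. The two places that need care are (i) setting up the sieve so that its entropy and approximation hypotheses match the precise form of Wong and Shen's theorem and the constants line up, and (ii) keeping track of where the $\log(1/\zeta)$ factors come from: they arise from truncating $\log(p/\hat q)$ at level $\asymp \log(1/\zeta)$ when passing from a Hellinger bound to a bound on the KL divergence and on the associated empirical process, which is exactly why the same logarithm shows up in the KL rate and in the denominator of the exponent of the tail probability.
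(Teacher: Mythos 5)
Your proposal is correct and follows essentially the same route as the paper's proof: both reduce the lemma to Wong and Shen's sieve-MLE theory via the uniform two-sided density bounds on $\mathcal{D}_{\mathrm{Box,Lip,\boundparam}}$, and both obtain the scale $\zeta_t$ from the entropy-integral fixed-point equation using the $O(\eta^{-(d+1)})$ Hellinger bracketing entropy of Lipschitz densities on the box. You simply spell out the bookkeeping (sieve construction, bounded likelihood ratios, Hellinger-to-KL passage) that the paper leaves implicit.
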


The sieve estimators in this result can be taken with a fair bit of lassitude. In particular, one explicit choice is to construct for each $\zeta > 0$ a bracket of the class $\mathcal{D}_{\mathrm{Box, Lip, \boundparam}}$ at scale $\zeta,$ and choose a representative density within each bracket of the class. The sieve MLE then involves choosing a $\zeta$ at each time, and estimating the law as the maximiser of likelihood amongst the aforementioned representative densities. Importantly for us, the lower brackets in these bracketings can be taken to be uniformly larger than $1/\boundparam,$ and the upper brackets smaller than $\boundparam,$ since $p \in [1/\boundparam, \boundparam],$ and as a result the sieve estimates are uniformly bounded between $1/\boundparam$ and $\boundparam$.

Below we first show Corollary~\ref{thm:rate_for_box} using the above results, and then show Corollary~\ref{prop:consistency_box} follows as a simple consequence of this argument.

\begin{proof}[Proof of Corollary~\ref{thm:rate_for_box}]
    As argued in the main text, the expected rejection time is bounded as $\mathbb{E}[\tau] \le \sum \pi_t + O(T_0),$ where $T_0 = o(d_H(p,\lc)^{-2(d+3)}).$ We thus only need to show that a sequence $\pi_t$ exists such that $\sum \pi_t$ is appropriately small, and that for any $t,$ \[ P^\infty\left(  \frac{\rho_t(\mathscr{E};p)}{td_H^2(p,\lc)} \ge \frac{1}{25}\right) \le \pi_t,\] where $p \in \mathcal{D}_{\mathrm{Box,Lip,\boundparam}}$ and $\mathscr{E}$ are sieve estimators. We proceed to do so below.
    
    For succinctness, we shall define $\varepsilon = d_H(p,\lc)$. Let $A$ be the constant from Lemma~\ref{lem:kl_bound}, and set \[T_1 := \min\{t : \zeta_t^2\log(1/\zeta_t)/A < \varepsilon^2/200, \zeta_t < 1/\sqrt{e}\}.\] Further let \[ \zeta(\varepsilon) := \max\{ \zeta \in [0,1/\sqrt{e}] : \zeta^2 \log(1/\zeta) \le A\varepsilon^2/200\}.\] In the subsequent proof, we shall use Lemma~\ref{lem:kl_bound} with $\zeta = \zeta(\varepsilon) \ge \zeta_{T_1}$. To this end, we note that if  $\zeta(\varepsilon) < 1/\sqrt{e} \iff A\varepsilon^2/200 < 1/2e,$ and in this case the equality $\zeta(\varepsilon)^2 \log(1/\zeta(\varepsilon)) = A\varepsilon^2/200$ holds. From this, we may derive\footnote{This equation is equivalent to $x \log x = y$ for $x = \zeta(\varepsilon)^2, y = A\varepsilon^2/100$ in the range $0<x < 1/e$. The claim follows by noting that the map $x \mapsto x \log(1/x)$ is monotonically increasing on [0,1/e], and verifying that for $y \in [0,1/2e], \frac{y}{2\log(1/y)} \cdot  \log(2\log(1/y)/y) < y.$ Indeed, this inequality is equivalent to arguing that $\log(2 \log(1/y)) < \log(1/y) \iff y\log(1/y) < 1/2,$ which holds since the maximum value of $y \mapsto y \log(1/y)$ is $1/e < 1/2$.} that $\zeta(\varepsilon)^2 > a \varepsilon^2/\log(1/\varepsilon)$ for some small enough constant $a$, and so that the exponent of the upper bound of Lemma~\ref{lem:kl_bound} is  
    \[\frac{\zeta(\varepsilon)^2}{A\log(1/\zeta(\varepsilon))} = \frac{400 \zeta^4}{A^2 \varepsilon^2} \ge  \frac{\varepsilon^2}{A' \log(1/\varepsilon)}\] for some large enough constant $A'.$ We shall also assume that $A' \ge \max(1,A).$ 
    
    Let $\mathscr{E}$ be a choice of sieve estimators such that for every $t, x \in [-1,1]^d, \hat{q}_{t-1}(x) \in [1/\boundparam,\boundparam],$ which can be ensured due to the discussion above. Notice, by the independence of the data $\{X_t\},$ that for any $t$, \[ \mathbb{E}[\log p(X_t)/\hat{q}_{t-1}(X_t) | \mathscr{F}_{t-1}] = \mathrm{KL}(p \|\hat{q}_{t-1}).\]

    Let $\theta \in (0,1)$ and $M \ge 0$ be two parameters of argument that we shall set later. Let us consider the case of $t = T_1 + \tau$ for some $\tau \ge M T_1$. 
    
    Since for each $\tau > 0, \zeta_{T_1 + \theta \tau} \le \zeta_{T_1} \le \zeta(\varepsilon),$ the bound of Lemma~\ref{lem:kl_bound} is effective at each time $s \in [T_1 + \theta\tau : T_1 + \tau]$ with $\zeta = \zeta(\varepsilon).$ As a result, applying Lemma~\ref{lem:kl_bound} to each $s$ in this range, and exploiting the behaviour of $\zeta(\varepsilon)^2$ established above, \[ P^\infty( \mathrm{KL}(p\|\hat{q}_{s-1}) > \varepsilon^2/200) \le A' \exp{-  s \frac{\varepsilon^2 }{A' \log(1/\varepsilon)}}. \] 

    Next, by applying the union bound over $s \in [T_1 + \theta\tau: T_1 + \tau]$ in the above result, we conclude that \begin{align*}
        &P^\infty\left(\exists s \in [T_1+\theta \tau : T_1 + \tau] : \mathrm{KL}(p\|\hat{q}_{s-1}) > \frac{\varepsilon^2}{200}\right) \\ &\qquad\le \sum_{s = T_1 + \theta\tau}^{T_1 + \tau} A' \exp{- s \frac{\varepsilon^2 }{A' \log(1/\varepsilon)} } \\ 
                     &\qquad= A' \exp{-(T_1 + \theta\tau) \frac{\varepsilon^2 }{A' \log(1/\varepsilon)}} \cdot \frac{1}{1-\exp{-\varepsilon^2/(A'\log(1/\varepsilon)}},\\
                     &\qquad\le \frac{A'^2 \log(1/\varepsilon)}{\varepsilon^2} \exp{- \theta \tau \frac{\varepsilon^2}{A' \log(1/\varepsilon)}}.
    \end{align*}
    where the equality sums over the geometric series, and the final inequality uses that $T_1 \ge 0$ and that for $u < 1, 1/(1-e^{-u}) \le \frac{2}{u}$.

    Next, observe that since for any $x\in [-1,1]^d,$  $\frac{1}{\boundparam^2} \le \frac{p(x)}{\hat{q}_{t-1}(x)} \le\boundparam^2,$ we have the bound $|\log(p(X_t)/\hat{q}_{t-1}(X_t))| \le 2\log \boundparam.$ Therefore, the Azuma-Hoeffding inequality is applicable, and yields that for every $\tau \ge 1, \delta > 0$ \[ P^\infty\left( \sum_{s = T_1 + \theta \tau }^{T_1 + \tau} \log \frac{p(X_s)}{\hat{q}_{s-1}(X_s)} > \sum_{s = T_1+\theta\tau }^{T_1 + \tau} \mathrm{KL}(p\|\hat{q}_{s-1}) + (\tau - \theta\tau) \delta \right) \le \exp{- (\tau - \theta \tau)\delta^2/8\log^2\boundparam}.\]

    We proceed by setting $\delta = \varepsilon^2/200$ in the above, and applying the union bound, to conclude that there exists a constant $C$ such that \begin{equation}\label{ineq:regret_tail_for_large_t}
        P^\infty\left( \sum_{s = T_1+\theta \tau}^{T_1 + \tau} \log \frac{p(X_s)}{\hat{q}_{s-1}(X_s)} > \frac{(1-\theta) \tau \varepsilon^2}{100}\right) \le \exp{- \frac{(1-\theta)\tau \varepsilon^4}{C \log^2\boundparam}} + \frac{C\log(1/\varepsilon)}{\varepsilon^2}\exp{-\theta\tau \frac{\varepsilon^2}{C \log(1/\varepsilon)}}.\end{equation} 

    Let us call the right hand side of (\ref{ineq:regret_tail_for_large_t}) $\pi(\tau, \theta)$. By the definition of $\rho_t,$ and the boundedness of $\log \frac{p(x)}{\hat{q}_{s-1}(x)}$ for every $s$, it follows that with probability at least $1 - \pi(\tau, \theta),$ \begin{align*}
        \rho_{T_1 + \tau}(\mathscr{E};p) &= \sum_{s = 1}^{T_1 + \tau} \log \frac{p(X_s)}{\hat{q}_{s-1}(X_s)} \\
                                         &\le 2(T_1 + \theta \tau) \log \boundparam + \frac{(1-\theta)\tau\varepsilon^2}{100}.
    \end{align*} 

    So long as we can choose $\theta, M$ such that the upper bound above is smaller than $(\tau + T_1) \varepsilon^2/25,$ the inequality (\ref{ineq:regret_tail_for_large_t}) will limit the probability that $\rho_{T_1 + \tau} > \varepsilon^2(T_1 + \tau)/25,$ which is precisely our goal. But observe that this indeed occurs if $(\theta + 1/M) \le 3 \varepsilon^2/(200 \log \boundparam),$ since in such a case \begin{align*}
        2(T_1 + \theta\tau) \log \boundparam + \frac{(1-\theta) \tau \varepsilon^2}{100} &\le \tau \left( 2(1/M + \theta)\log\boundparam + \frac{\varepsilon^2}{100}\right) \\
        &\le \tau \left( \frac{3 \varepsilon^2}{200 \log \boundparam} \cdot 2 \log\boundparam + \frac{\varepsilon^2}{100} \right)\\ 
        &= \frac{\tau \varepsilon^2}{25} \le \frac{(T_1 + \tau)\varepsilon^2}{25}.
    \end{align*}

    So, we may set $\theta = \min(1/2, \varepsilon^2/(100\log\boundparam))$ and $M = \max(1,(200 \log \boundparam)/\varepsilon^2),$ and conclude that for any $\tau \ge MT_1$, it holds that \[ P^\infty(\rho_{T_1 + \tau}(\mathscr{E};p)/t\varepsilon^2 > 1/25) \le \pi(\tau),\] where, for a constant $C',$ \[\pi(\tau) = \exp{- \frac{\tau \varepsilon^4}{C'\log^2\boundparam}} + \frac{C'\log(1/\varepsilon)}{\varepsilon^2} \exp{-\frac{\tau \varepsilon^2}{C' \log(1/\varepsilon)\cdot \log\boundparam}}. \]

    Note that in the terminology of Theorem~\ref{thm:rate}, $\pi_t = \pi(t - T_1)$ for $t \ge (M+1)T_1$. Of course we can always provide the trivial bound $\pi_t \le 1$ for $t < (M+1)T_1.$ It remains to compute the resulting bound on expected rejection time. To this end, observe by summing the appropriate geometric series that \begin{align*} \sum_{t \ge 1} \pi_t &\le (M+1)T_1 + \sum_{\tau = MT_1}^\infty \pi(\tau) \\
    &\le (M+1)T_1 + \frac{1}{1 - \exp{- \varepsilon^4/C'\log^2\boundparam}} + \frac{C'\log(1/\varepsilon)}{\varepsilon^2(1 - \exp{-\varepsilon^2/C'\log(1/\varepsilon) \cdot \log\boundparam}} \\
    &\le O\left(\frac{1}{\varepsilon^2}\right) T_1 + \widetilde{O}\left( \frac{1}{\varepsilon^4} \right),\end{align*}
    where the $O$ bounds are as $\varepsilon \to 0$, and we have hidden the dependence on $\boundparam$ and $\log(1/\varepsilon)$. But, since in Lemma~\ref{lem:kl_bound}, $\zeta_t = \widetilde{O}(t^{-1/2(d+2)}),$ and since $T_1$ is the first time that $\zeta_t^2\log^2(1/\zeta_t) \le A \varepsilon^2/200,$ we may conclude that $T_1 = \widetilde{O}(\varepsilon^{-2(d+2)}).$ The claim follows upon noticing that $O(\varepsilon^{-2}) \cdot T_1 = \widetilde{O}(\varepsilon^{-2(d+3)}),$ and recalling that $\varepsilon = d_H(p,\lc)$.
\end{proof}

We conclude with a brief proof of Corollary~\ref{prop:consistency_box} that exploits the bounds developed in the argument above.
\begin{proof}[Proof of Corollary~\ref{prop:consistency_box}]
    It suffices to argue that using the estimators in the proof of Corollary~\ref{thm:rate_for_box}, for any $P \in \mathcal{D}_{\mathrm{Box,Lip,\boundparam}},$ \[ P^\infty\left(\limsup \frac{\rho_t(\mathscr{E};p)}{td_H^2(p,\lc)} \le \frac{1}{25}\right) = 1. \] This follows since for each $t$, \[ P^\infty\left( \frac{\rho_t(\mathscr{E};p)}{td_H^2(p,\lc)} > \frac{1}{25} \right) \le \pi_t,\] and $\sum \pi_t < \infty,$ which yields precisely the above relation by the Borel-Cantelli Lemma. \qedhere
\end{proof}

\end{document}